\documentclass[a4paper,10pt,reqno]{amsart}
\usepackage{amsmath,amsfonts,amsthm,amssymb,color}
\usepackage[T1]{fontenc}
\usepackage{pdfsync}
\usepackage{csquotes}
\usepackage{graphicx}
\usepackage{pstricks}
\usepackage{lmodern}
\usepackage{calc}
\usepackage{mathabx}
\usepackage{mathrsfs}

\usepackage{accents}
\newcommand{\dbtilde}[1]{\accentset{\approx}{#1}}

\usepackage{enumerate}
\usepackage{hyperref}

\numberwithin{equation}{section}
%%%%%%%%%%%%%%%%
\usepackage{tikz}

%%%%%%%%%%%%%%%%%%%%%%

%%  \footnotesep 14pt
%%  \floatsep 27pt plus 2pt minus 4pt   % Nominal is double what is in art12.sty
%%  \textfloatsep 40pt plus 2pt minus 4pt
%%  \intextsep 27pt plus 4pt minus 4pt

%% %   Somewhat wider and taller page than in art12.sty
  \topmargin -0.4in  \headsep 0.4in  \textheight 9.0in
   \oddsidemargin 0.02in  \evensidemargin 0.15in  
\textwidth 6.3in

\newcommand{\llangle}{\langle\! \langle}
\newcommand{\rrangle}{\rangle\! \rangle}

\newcommand{\frx}{\mathfrak{X}}

\newcommand{\be}{\beta}

\newcommand{\1}{\mathbf{1}}

\newcommand{\sq}{\square}

\newcommand{\lati}{\tilde{\la}}
\newcommand{\betati}{\tilde{\beta}}

\newcommand{\yti}{\tilde{y}}

\newcommand{\cq}{\mathcal{Q}}
\newcommand{\bu}{\mathbf{u}}
\newcommand{\bbu}{\mathbf{\bar{u}}}
\newcommand{\bv}{\mathbf{v}}
\newcommand{\bw}{\mathbf{w}}

%%%%%%%%%%%%%%%%%%%%%%%%%%%%%%%%%%%%%%%%%%%
%%%%%%%%%%%%%%% Mathbb %%%%%%%%%%%%%%%%%%%%%%%%
%%%%%%%%%%%%%%%%%%%%%%%%%%%%%%%%%%%%%%%%%%%%

\newcommand{\R}{\mathbb R}
\newcommand{\N}{\mathbb N}

%%%%%%%%%%%%%%%%%%%%%%%%%%%%%%%%%%%%%%%%%%%%
%%%%%%%%%%%%%%% Calligraphic %%%%%%%%%%%%%%%%%%%%%%%
%%%%%%%%%%%%%%%%%%%%%%%%%%%%%%%%%%%%%%%%%%%%%
\newcommand{\ca}{\mathcal A}
\newcommand{\cb}{\mathcal B}
\newcommand{\cac}{\mathcal C}
\newcommand{\cd}{\mathcal D}
\newcommand{\ce}{\mathcal E}
\newcommand{\cf}{\mathcal F}
\newcommand{\cg}{\mathcal G}
\newcommand{\ch}{\mathcal H}

\newcommand{\cj}{\mathcal J}

\newcommand{\cm}{\mathcal M}
\newcommand{\cn}{\mathcal N}
\newcommand{\cp}{\mathcal P}
\newcommand{\cs}{\mathcal S}
\newcommand{\cw}{\mathcal W}

%%%%%%%%%%%%%%%%%%%%%%%%%%%%%%%%%%%%%%%%%
%%%%%%%%%%%%%%% Greek %%%%%%%%%%%%%%%%%%%%%%%
%%%%%%%%%%%%%%%%%%%%%%%%%%%%%%%%%%%%%%%%%%
\newcommand{\al}{\alpha}

\newcommand{\ga}{\gamma}

\newcommand{\ka}{\kappa}
\newcommand{\la}{\lambda}

\newcommand{\si}{\sigma}

\newcommand{\vp}{\varphi}

%%%%%%%%%%%%%%%%%%%%%%%%%%%%%%%%%%%%%%%%%%
%%%%%%%%%%%%%%% Brackets %%%%%%%%%%%%%%%%%%%%%%
%%%%%%%%%%%%%%%%%%%%%%%%%%%%%%%%%%%%%%%%%%

\newtheorem{theorem}{Theorem}[section]

\newtheorem{definition}[theorem]{Definition}

\newtheorem{lemma}[theorem]{Lemma}

\newtheorem{propdef}[theorem]{Proposition-Definition}
\newtheorem{proposition}[theorem]{Proposition}

\theoremstyle{remark}
\newtheorem{remark}[theorem]{Remark}

%%%%%%%%%%%%%%%%%%%%%%%%%%%%%%%%%%%%%%%%%%
%%%%%%%%%%%%%%% Tikz graphics %%%%%%%%%%%%%%%%%%%%%%
%%%%%%%%%%%%%%%%%%%%%%%%%%%%%%%%%%%%%%%%%%

\pgfdeclareshape{crosscircle}
{
  \inheritsavedanchors[from=circle] % this is nearly a circle
  \inheritanchorborder[from=circle]
  \inheritanchor[from=circle]{north}
  \inheritanchor[from=circle]{north west}
  \inheritanchor[from=circle]{north east}
  \inheritanchor[from=circle]{center}
  \inheritanchor[from=circle]{west}
  \inheritanchor[from=circle]{east}
  \inheritanchor[from=circle]{mid}
  \inheritanchor[from=circle]{mid west}
  \inheritanchor[from=circle]{mid east}
  \inheritanchor[from=circle]{base}
  \inheritanchor[from=circle]{base west}
  \inheritanchor[from=circle]{base east}
  \inheritanchor[from=circle]{south}
  \inheritanchor[from=circle]{south west}
  \inheritanchor[from=circle]{south east}
  \inheritbackgroundpath[from=circle]
  \foregroundpath{
    \centerpoint%
    \pgf@xc=\pgf@x%
    \pgf@yc=\pgf@y%
    \pgfutil@tempdima=\radius%
    \pgfmathsetlength{\pgf@xb}{\pgfkeysvalueof{/pgf/outer xsep}}%  
    \pgfmathsetlength{\pgf@yb}{\pgfkeysvalueof{/pgf/outer ysep}}%  
    \ifdim\pgf@xb<\pgf@yb%
      \advance\pgfutil@tempdima by-\pgf@yb%
    \else%
      \advance\pgfutil@tempdima by-\pgf@xb%
    \fi%
    \pgfpathmoveto{\pgfpointadd{\pgfqpoint{\pgf@xc}{\pgf@yc}}{\pgfqpoint{-0.707107\pgfutil@tempdima}{-0.707107\pgfutil@tempdima}}}
    \pgfpathlineto{\pgfpointadd{\pgfqpoint{\pgf@xc}{\pgf@yc}}{\pgfqpoint{0.707107\pgfutil@tempdima}{0.707107\pgfutil@tempdima}}}
    \pgfpathmoveto{\pgfpointadd{\pgfqpoint{\pgf@xc}{\pgf@yc}}{\pgfqpoint{-0.707107\pgfutil@tempdima}{0.707107\pgfutil@tempdima}}}
    \pgfpathlineto{\pgfpointadd{\pgfqpoint{\pgf@xc}{\pgf@yc}}{\pgfqpoint{0.707107\pgfutil@tempdima}{-0.707107\pgfutil@tempdima}}}
  }
}
\makeatother

\colorlet{symbols}{blue!90!black}
\colorlet{symbolsb}{black!90!black}
\colorlet{testcolor}{green!60!black}

%\colorlet{symbols}{red}
%\colorlet{testcolor}{red!20}

%\def\1{\mathbf{\symbol{1}}}

\usetikzlibrary{shapes.misc}
\usetikzlibrary{shapes.symbols}
\usetikzlibrary{snakes}
\usetikzlibrary{decorations}
\usetikzlibrary{decorations.markings}

\def\drawx{\draw[-,solid] (-3pt,-3pt) -- (3pt,3pt);\draw[-,solid] (-3pt,3pt) -- (3pt,-3pt);}
\tikzset{
	root/.style={circle,fill=testcolor,inner sep=0pt, minimum size=2mm},
	dot/.style={circle,fill=black,inner sep=0pt, minimum size=1mm},
	var/.style={circle,fill=black!10,draw=black,inner sep=0pt, minimum size=2mm},
	dotred/.style={circle,fill=black!50,inner sep=0pt, minimum size=2mm},
	generic/.style={semithick,shorten >=1pt,shorten <=1pt},
	dist/.style={ultra thick,draw=testcolor,shorten >=1pt,shorten <=1pt},
	testfcn/.style={ultra thick,testcolor,shorten >=1pt,shorten <=1pt,<-},
	testfcnx/.style={ultra thick,testcolor,shorten >=1pt,shorten <=1pt,<-,
		postaction={decorate,decoration={markings,mark=at position 0.6 with {\drawx}}}},
	kprime/.style={semithick,shorten >=1pt,shorten <=1pt,densely dashed,->},
	kprimex/.style={semithick,shorten >=1pt,shorten <=1pt,densely dashed,->,
		postaction={decorate,decoration={markings,mark=at position 0.4 with {\drawx}}}},
	kernel/.style={semithick,shorten >=1pt,shorten <=1pt,->},
	multx/.style={shorten >=1pt,shorten <=1pt,
		postaction={decorate,decoration={markings,mark=at position 0.5 with {\drawx}}}},
	kernelx/.style={semithick,shorten >=1pt,shorten <=1pt,->,
		postaction={decorate,decoration={markings,mark=at position 0.4 with {\drawx}}}},
	kernel1/.style={->,semithick,shorten >=1pt,shorten <=1pt,postaction={decorate,decoration={markings,mark=at position 0.45 with {\draw[-] (0,-0.1) -- (0,0.1);}}}},
	kernel2/.style={->,semithick,shorten >=1pt,shorten <=1pt,postaction={decorate,decoration={markings,mark=at position 0.45 with {\draw[-] (0.05,-0.1) -- (0.05,0.1);\draw[-] (-0.05,-0.1) -- (-0.05,0.1);}}}},
	kernelBig/.style={semithick,shorten >=1pt,shorten <=1pt,decorate, decoration={zigzag,amplitude=1.5pt,segment length = 3pt,pre length=2pt,post length=2pt}},
	rho/.style={dotted,semithick,shorten >=1pt,shorten <=1pt},
	renorm/.style={shape=circle,fill=white,inner sep=1pt},
	labl/.style={shape=rectangle,fill=white,inner sep=1pt},
	xi/.style={circle,fill=symbols!10,draw=symbols,inner sep=0pt,minimum size=1.2mm},
	xiblack/.style={circle,fill=symbolsb,draw=symbolsb,inner sep=0pt,minimum size=1.2mm},
	xix/.style={crosscircle,fill=symbols!10,draw=symbols,inner sep=0pt,minimum size=1.2mm},
	xib/.style={circle,fill=symbols!10,draw=symbols,inner sep=0pt,minimum size=1.6mm},
	xibx/.style={crosscircle,fill=symbols!10,draw=symbols,inner sep=0pt,minimum size=1.6mm},
	not/.style={circle,fill=symbols,draw=symbols,inner sep=0pt,minimum size=0.5mm},
	notblack/.style={circle,fill=symbolsb,draw=symbolsb,inner sep=0pt,minimum size=0.5mm},
	>=stealth,
	}
\makeatletter
\def\DeclareSymbol#1#2#3{\expandafter\gdef\csname MH@symb@#1\endcsname{\tikz[baseline=#2,scale=0.15,draw=symbols]{#3}}\expandafter\gdef\csname MH@symb@#1s\endcsname{\scalebox{0.7}{\tikz[baseline=#2,scale=0.15,draw=symbols]{#3}}}}
\def\<#1>{\csname MH@symb@#1\endcsname}
\makeatother

%%%%%%

\DeclareSymbol{circle}{0.5}{\draw (0,0.7) node[xi] {};}
\DeclareSymbol{line}{0.5}{\draw (0,0.2) node[not] {} -- (0,1.4) node[not] {};}

\DeclareSymbol{Psi}{0.5}{\draw (0,0) node[not] {} -- (0,1.5) node[xi] {};}
\DeclareSymbol{Psiblack}{0.5}{\draw (0,0) node[notblack] {} -- (0,1.5) node[xiblack] {};}
\DeclareSymbol{Psi2}{0.5}{\draw (-0.8,1) node[xi] {} -- (0,0) node[not] {} -- (0.8,1) node[xi] {};}
\DeclareSymbol{IPsi2}{0}{\draw (0,1) -- (0.8,2.2) node[xi] {};\draw (0,-0.25) node[not] {} -- (0,1) node[not] {} -- (-0.8,2.2) node[xi] {};}
\DeclareSymbol{PsiIPsi2}{0}{\draw (0,1) -- (0.8,2.2) node[xi] {};\draw (0,-0.25) node[not] {} -- (0,1) node[not] {} -- (-0.8,2.2) node[xi] {};\draw (0,-0.25) node[not] {} -- (0.8,1) node[xi] {};}

%%%%%%

%%%%%%%%%%%%%%%%%%%%%%%%%%%%%%%%%%%%%%%%%%%%%%
%%%%%%%%%%%%%%%%%%%%%%%%%%%%%%%%%%%%%%%%%%%%%%

\date{\today}

\begin{document}

\

\begin{center}
{\large\textbf{
A full discretization of the rough fractional linear heat equation
}}\\~\\
Aur\'elien Deya\footnote{Université de Lorraine, CNRS, IECL, F-54000 Nancy, France. Email: {\tt aurelien.deya@univ-lorraine.fr}}
and Renaud Marty\footnote{Université de Lorraine, CNRS, IECL, F-54000 Nancy, France. Email: {\tt renaud.marty@univ-lorraine.fr}.}
\end{center}

\bigskip

{\small \noindent {\bf Abstract:} We study a full discretization scheme for the stochastic linear heat equation
\begin{equation*}
\begin{cases}
\partial_t \<Psi>=\Delta \<Psi>+\dot{B}\, , \quad t\in [0,1], \ x\in \R,\\
\<Psi>_0=0\, ,
\end{cases}
\end{equation*}
when $\dot{B}$ is a very \emph{rough space-time fractional noise}.

\smallskip

The discretization procedure is divised into three steps: $(i)$ regularization of the noise through a mollifying-type approach; $(ii)$ discretization of the (smoothened) noise as a finite sum of Gaussian variables over rectangles in $[0,1]\times \R$; $(iii)$ discretization of the heat operator on the (non-compact) domain $[0,1]\times \R$, along the principles of Galerkin finite elements method. 

\smallskip

We establish the convergence of the resulting approximation to $\<Psi>$, which, in such a specific rough framework, can only hold in a space of distributions. We also provide some partial simulations of the algorithm. 

\bigskip

\noindent {\bf Keywords:} stochastic heat equation, fractional noise, space-time discretization procedure.

\bigskip

\noindent
{\bf 2020 Mathematics Subject Classification:}  60H15, 60G22, 60H35.}

\section{Introduction and main result}

\subsection{Introduction}

\

\smallskip

The main objective of this study is to provide a full discretization scheme for the solution $\<Psi>$ of the stochastic heat equation
\begin{equation}\label{equa-luxo}
\begin{cases}
\partial_t \<Psi>=\Delta \<Psi>+\dot{B}\, , \quad t\in [0,1], \ x\in \R,\\
\<Psi>_0=0\, ,
\end{cases}
\end{equation}
where $\dot{B}$ is a stochastic space-time noise, defined on a complete probability space $(\Omega,\cf,\mathbb{P})$. In fact, the specificity of our analysis will lie in the consideration of a \emph{rough fractional noise}. Namely, given a fractional sheet $\{B_{t,x}, \, (t,x)\in \R^2\}$ of Hurst indexes $H_0,H_1\in (0,1)$ (see Definition \ref{defi:fractional-sheet} for details), we set
\begin{equation}\label{noise-as-deriv}
\dot{B}:=\frac{\partial^2 B}{\partial t \partial x} \, ,
\end{equation}
where the derivatives are understood in the sense of distributions. 

\smallskip

Due to its great flexibility, the fractional noise model has now been widely recognized as one of the most relevant alternatives to the standard white noise situation, whether for finite-dimensional systems or in SPDE settings. The fractional setting is also known to provide a convenient framework to study the influence of the noise roughness on the dynamics. In brief, when letting the parameters $H_0,H_1$ progressively decrease from $1$ to $0$, the regularity of $\dot{B}$ decreases as well, and the analysis becomes more and more intricate. 

\smallskip

In this context, let us recall that the space-time white noise setting precisely corresponds to the case where $H_0=H_1=\frac12$. In this specific situation, the approximation issue for the stochastic heat model \eqref{equa-luxo} or its extensions (whether with a multiplicative noise, or a non-linear drift) has been the source of a huge amount of papers since the late nineties and the pioneering works by Gy{\"o}ngy, Nualart and others (see for instance \cite{gyongy,gyongy2,gyongy-nualart-95,gyongy-nualart-97}, or \cite{anton-cohen-quer} and its bibliography for a survey on more recent developments). One of our main objectives in the present study will be to go beyond the standard white-noise situation, by considering a much less regular noise. Indeed, we will here focus on the case of a fractional noise $\dot{B}$ with indexes $H_0,H_1$ satisfying the condition
\begin{equation}\label{cond:h-0-h-1-intro}
0<2H_0+H_1<1\, .
\end{equation}

Our essential motivation for considering such a rough situation is actually easy to formulate: one can indeed show that as soon as $2H_0+H_1<1$, the solution $\<Psi>$ of \eqref{equa-luxo} is \emph{no longer a function in space}, but only a \emph{general distribution } (see Proposition \ref{sto} for details). Accordingly, the associated approximation issue cannot be examined through function norms either, and negative-order Sobolev topologies must come into the picture. This strongly contrasts with most of the existing statements in the white-noise literature (typically, convergence is therein established using the $L^2$-norm in space), and we thus consider our handling of negative-order Sobolev norms in the analysis as a new contribution in the understanding of the stochastic linear heat problem.

\smallskip

In this regard, the assumption \eqref{cond:h-0-h-1-intro}, leading to a distributional-valued $\<Psi>$, can be compared with the behaviour of the corresponding two-dimensional heat equation driven by a space-time white noise (that is, the equation on $[0,1]\times \R^2$ or $[0,1]\times \mathbb{T}^2$). Indeed, it turns out that the solution of such two-dimensional white-noise equation cannot be treated as a process with values in $L^2(\R^2)$, but only as a process with values in the Sobolev space $H^{-\varepsilon}(\R^2)$ for any $\varepsilon>0$ (see for instance \cite{daprato-deb-1,daprato-deb-2}). The consideration of a rough fractional noise thus allows us to face with a similar challenge, but in the one-dimensional setting, for which discretization methods are naturally more convenient to set up.

\

Another important motivation behind our interest for the linear solution $\<Psi>$ lies in the central role played by this process in many recent developments about the pathwise approach to general stochastic PDEs. For instance, in the study of the celebrated white-noise-driven $\Phi^4$-model
\begin{equation}\label{phi-4-model}
\partial_t \Phi=\Delta \Phi-\Phi^3+\xi , \quad t\in [0,T], \ x\in \mathbb{T}^d, \ d\in \{2,3\},
\end{equation}
the corresponding linear solution $\<Psi>$ (i.e., $\partial_t \<Psi>=\Delta \<Psi>+\xi$) can regarded as some first-order approximation of $\Phi$, and the analysis then consists in the control of the more regular path $\Psi:=\Phi-\<Psi>$ (see \cite{daprato-deb-2} for details when $d=2$, \cite[Section 9.2]{hai-14}, \cite[Section 6]{hairer-intro} when $d=3$, and let us also stress the fundamental role of $\<Psi>$ in the renormalization procedures associated with such singular models). Similar phenomena have recently been exhibited in the fractional situation for the quadratic counterpart of \eqref{phi-4-model} (see \cite{oh-okamoto} for $d=2$, and \cite{schaeffer} for $d\geq 1$), and the strategy happens to be equally fruitful in wave and Schr{\"o}dinger settings (see e.g. \cite{deya-wave,deya-wave-2} and \cite{deya-schaeffer-thomann}, respectively). 

\smallskip

With these various works in mind, we consider the present investigations about the discretization (and possibly the simulation) of $\<Psi>$ as an important first step toward the discretization of more general singular stochastic PDEs.% which we plan to investigate in a future work.

\

Before we present our approximation strategy, let us emphasize the following three major difficulties raised by the model.%, and which will occasionally give rise to quite painful computations in the sequel.

\smallskip

\noindent
$(i)$ First, it is well-known that the flexibility of fractional noises (i.e., the fact that one can control the overall roughness of the noise through the parameters $H_0,H_1$) comes at a cost: indeed, as soon as $H_i\neq \frac12$, sophisticated fractional kernels must be involved in the analysis of any construction related to the field, which rules out the drastic simplifications offered by It{\^o}-type isometry properties. These additional technicalities can be observed right from the proof of Proposition \ref{sto}, that is right from the interpretation phase of the model, and they will also have a major impact on the subsequent steps.  

\smallskip

\noindent
$(ii)$ In the rough regime \eqref{cond:h-0-h-1-intro}, and as we mentionned it earlier, the solution $\<Psi>$ is no longer a well-defined Gaussian process on $[0,1]\times \R$, and it can only be handled as a distribution in space (see Section \ref{sec:def-solution} for more details). We are thus forced to deal with negative-order Sobolev norms (represented by fractional weights in the Fourier mode) throughout the study, which naturally adds another level of technicality to our computations.  

\smallskip

\noindent
$(iii)$ As it can be seen from \eqref{equa-luxo}, we intend to handle the equation on the whole Euclidean space $\R$, which, as far as we know, is not exactly the most common setting in the approximation literature (bounded domains with suitable boundary conditions appear to be much more frequent, for technical reasons one can easily understand). In fact, our objective in this regard is to make a first possible step toward some of the most recent developments on parabolic models driven by fractional noises, and which are all concerned with equations on the non-compact domain $[0,T]\times \R^d$ (see for instance \cite{chen,chen-deya-ouyang-tindel,deya-heat,hu-huang-nualart-tindel}). Let us also point out that the definition of a space-time fractional noise on the Euclidean space is quite obvious (along \eqref{noise-as-deriv}), whereas there is no consensus about the definition of such an object on a torus.

\smallskip

Of course, the consideration of a non-compact space domain is not costless either. As a particular consequence, our discretization scheme for \eqref{equa-luxo} shall appeal to a finite grid \emph{growing to $\R$} (see details in Section \eqref{subsec:descript-scheme}), which requires a careful control of the approximation process on the growing boundary (see for instance the bound derived from the Galerkin approximation of the heat operator in Proposition \ref{prop:gene-galer-s-t}). Besides, due to the asymptotic behaviour of the fractional sheet, convergence estimates for $\<Psi>$ and its approximation can only be analyzed by means of weighted topologies in space, which eventually echoes in the statement of our main Theorem \ref{main-loose-estimate} (see the involvement of the arbitrary cut-off function $\rho$ in our final bound \eqref{main-loose-estimate}).

\

\emph{Thus, even though the overall linear dynamics of equation \eqref{equa-luxo} may look quite basic at first sight, we think that the above-described features $(i)$-$(ii)$-$(iii)$ turn the analysis and discretization of the problem into a highly non-trivial question, and to the best of our knowledge, there exists no previous approximation study taking those specificities into account.}

\

Let us now briefly describe the successive steps that will punctuate our discretization procedure.

\

\noindent
\textbf{(1) Interpretation of the solution through a smoothening procedure.} The high roughness of the noise $\dot{B}$ under consideration (as induced by assumption \eqref{cond:h-0-h-1-intro}) immediately gives rise to a first basic question before one can think about discretization, namely: how to interpret the solution $\<Psi>$ of \eqref{equa-luxo} in this setting? If $\dot{B}$ were to be more regular, then this solution would be explicitly given by the space-time convolution $\<Psi>=G \ast \dot{B}$,
where $G$ stands for the heat kernel
$$G_t(x):=\frac{1}{\sqrt{4\pi t}} \exp\Big( -\frac{x^2}{4t}\Big) \, \1_{\{t>0\}}  \, , \ t\in [0,T], \ x\in \R \, .$$
Unfortunately, when switching to the rough regime, the meaning of the convolution of $\dot{B}$ with the singular kernel $G$ is no longer clear.

\smallskip

In order to reach such an interpretation, we will rely on a standard regularization procedure, and thus follow the strategy used in most recent pathwise approaches to SPDEs (regularity structures, paracontrolled calculus). In other words, starting from a smooth approximation $B^n$ of $B$, we intend to define $\<Psi>$ as the (potential) limit, in a suitable Sobolev space, of the sequence $G\ast \partial_t\partial_x B^n$ of approximated solutions.

\smallskip

For further reference, let us specify right now our choice for the approximation of $B$ (we will comment further on this choice in Section \ref{sec:def-solution}, see Remark \ref{rk:mollifying} and Remark \ref{rk:biblio}): namely, for every parameter $\ka >0$, we consider the sequence $(B^{\ka,n})_{n\geq 1}$ defined for every $n\geq 1$ as
\begin{equation}\label{b-bar-n}
B^{\ka,n}_t(x)=c_{H_0}c_{H_1} \int_{|\xi|\leq 2^{2\ka n}}\int_{|\eta| \leq 2^{\ka n}} \widehat{W}(d\xi,d\eta) \, \frac{e^{\imath \xi t}-1}{|\xi|^{H_0+\frac12}}\frac{e^{\imath \eta x}-1}{|\eta|^{H_1+\frac12}} \, ,
\end{equation}
where $\widehat{W}$ stands for the Fourier transform of a space-time white noise, and 
$$c_{H_i}=\frac{1}{2}\left( \int_0^{\infty}d\xi \, \frac{1-\cos\xi}{|\xi |^{2H_i+1}} \right)^{-1/2}, \  i=0,1 .$$
It is easy to check that for every fixed $n\geq 1$, the process $B^{\ka,n}$ is (a.s.) smooth on $\R^2$, due to the \enquote{frequency} cut-off $\{|\xi|\leq 2^{2\ka n},\, |\eta| \leq 2^{\ka n}\}$ in the representation \eqref{b-bar-n}. Besides, using standard results about the harmonizable representation of fractional sheets (see e.g. \cite{samo-taqqu}), it can be shown that $B^{\ka,n}$ converges (a.s.) to a fractional sheet $B$ of Hurst indexes $H_0,H_1$.

\smallskip

With this approximation in hand, we will prove (Proposition \ref{sto}) the existence of a threshold value $\al_{d,H}\geq 0$ such that for every $\al>\al_{d,H}$, the sequence $(\<Psi>^{\ka,n})_{n\geq 1}$ of classical solutions to
\begin{equation}\label{equa-luxo-ka-n}
\begin{cases}
\partial_t \<Psi>^{\ka,n}=\Delta \<Psi>^{\ka,n}+\partial_t\partial_x B^{\ka,n}\, , \quad t\in [0,1], \ x\in \R,\\
\<Psi>^{\ka,n}_0=0\, ,
\end{cases}
\end{equation}
converges in the scale $\cac([0,T],\cw^{-\al,p})$ for every $p\geq 1$, where the notation $\cw^{-\al,p}$ refers to the fractional Bessel-potential space in $\R$ (see \eqref{bessel-potential-space}). Along the above considerations, we henceforth define $\<Psi>$ as the limit of this sequence.

\

\noindent
\textbf{(2) Discretization of the noise}. We can now turn to the discretization procedure itself (note indeed that the previous smoothened solution $\<Psi>^{\ka,n}$ is clearly not sufficient in this regard). In fact, our general objective can be loosely summed up as follows: find a way to approximate the solution $\<Psi>$ through a discrete iterative algorithm involving (a finite number of) Gaussian increments.

%However, representing $\<Psi>^{\ka,n}$ naturally requires us to first represent the whole continuous process $\partial_t\partial_x B^{\ka,n}$, and thus the sole approximation  cannot be considered as satisfying in view of a simulation procedure.  

\smallskip

At a basic level, this challenge somehow corresponds to the search for an extension, in the heat setting, of the discretization methods used for the elementary linear standard differential equation
\begin{equation}\label{eds}
d\Psi_t=\dot{B}_t \, , \ \Psi_0=0 \, ,
\end{equation}
where $B$ is a standard one-parameter fractional Brownian motion. The solution of \eqref{eds} is of course given by the process $B$ itself, but when it comes to discretization, the standard linear interpolation of $\Psi$ can be regarded as the result of a two-step scheme: $(i)$ discretize the noise $\dot{B}$ through increments of $B$
$$\dot{B}^n:=n\sum_{i=0}^{n-1} (B_{t_{i+1}}-B_{t_i})\1_{[t_i,t_{i+1})} \, , \quad t_i:=\frac{i}{n}\, ;$$
$(ii)$ define $\Psi^n$ as the convolution of $\dot{B}^n$ with the Heaviside kernel $\1_{\R_+}$, i.e. set, for $t\in [t_i,t_{i+1})$, 
\begin{equation}\label{defi-psi-n}
\Psi^n_t:=\int_0^t ds \, \dot{B}^n_s=B_{t_i}+n(t-t_i)  (B_{t_{i+1}}-B_{t_i}) \, ,
\end{equation}
which indeed leads us to an aproximation of $\Psi$ based on a Gaussian vector
$$(X_0,\ldots,X_{n-1}):=(B_{t_1}-B_0,B_{t_2}-B_{t_1},\dots,B_{t_n}-B_{t_{n-1}}) \, .$$

\

Let us transpose the above steps in the present heat situation, and more precisely to the equation \eqref{equa-luxo-ka-n} (as a reminiscence of our interpretation of $\<Psi>$ as the limit of $\<Psi>^{\ka,n}$). Accordingly, we first discretize the noise in \eqref{equa-luxo-ka-n} by means of rectangular increments of $B^{\ka,n}$ along the (growing) dyadic grid
\begin{equation}\label{grid-intro}
t_i:=\frac{i}{2^n} \ \  (i=0,\ldots,2^n) , \quad x_j:=\frac{j}{2^n} \ \  (j=-2^{2n},\ldots,2^{2n}) \, ,
\end{equation}
that is we consider the approximation of $\partial_t\partial_x B^{\ka,n}$ on $[0,1]\times \R$ given by
\begin{equation}\label{fir-app}
 \partial_t \partial_x \widetilde{B}^{\ka,n}:=\sum_{i=0}^{2^n-1} \sum_{j=-2^{2n}}^{2^{2n}-1}(2^{2n} \!\sq^n_{i,j}\!\!  B^{\ka,n}) \, \1_{\sq^n_{ij}} \, ,
\end{equation}
where $\1_{\sq^n_{ij}}(s,x):=\1_{[t_i,t_{i+1})}(s) \1_{[x_j,x_{j+1})}(x)$, and for every two-parameter path $b:[0,1]\times \R\to \R$, 
\begin{equation}\label{rectangular-increment-intro}
\sq^n_{i,j} b:=b_{t_{i+1}}(x_{j+1})-b_{t_{i+1}}(x_{j})-b_{t_{i}}(x_{j+1})+b_{t_{i}}(x_{j}) \, .
\end{equation}
Then, following the one-parameter pattern in \eqref{defi-psi-n}, we define the approximation $\widetilde{\<Psi>}^{\ka,n}$ as the solution related to $ \partial_t \partial_x \widetilde{B}^{\ka,n}$, that is as the (well-defined) convolution
%\begin{equation}\label{luxo-n}
%\widetilde{\<Psi>}^{n}_{t}(x):=(G\ast \partial_t \partial_x \widetilde{B}^{n})_t(x) \, .
%\end{equation}
%In fact, for some technical reasons we shall specify in the subsequent analysis (see Section \ref{sec:noise-discretization}), this noise-discretization procedure happens to be more efficient, that is more convenient to control, when applied to the approximation $B^{\ka,n}$ at the core of the above step $(1)$ - given the fundamental role of $B^{\ka,n}$ in the interpretation of $\<Psi>$, this phenomenon should actually not appear as a surprise to the reader. Otherwise stated, the discretized noise we will consider in the sequel is not exactly the one in \eqref{fir-app}, but rather the one defined by
%\begin{equation}
%\dot{B}=\partial_t\partial_x B\approx \partial_t \partial_x \widetilde{B}^{\ka,n}:=\sum_{i=0}^{2^n-1} \sum_{j=-2^{2n}}^{2^{2n}-1}(2^{2n} \!\sq^n_{i,j}\!\!  B^{\ka,n}) \, \1_{\sq^n_{ij}} \, ,
%\end{equation}
%which then turns the solution approximation \eqref{luxo-n} into
\begin{equation}\label{luxotilde-ka-n}
\widetilde{\<Psi>}^{\ka,n}_{t}(x):=(G\ast \partial_t \partial_x \widetilde{B}^{\ka,n})_t(x) \, .
\end{equation}
Just as in \eqref{defi-psi-n}, and for every $(t,x)\in [0,T]\times \R$, the value of  $\widetilde{\<Psi>}^{\ka,n}_{t}(x)$ can thus be expressed as a combination of the values of the Gaussian vector
$$\{\sq^n_{i,j} B^{\ka,n}, \ i=0,\ldots,2^n-1, \, j=-2^{2n},\ldots,2^{2n}-1\} \, .$$
This natural noise-discretization step will be fully justified in Section \ref{sec:noise-discretization}: we will therein prove that, at least if the \enquote{frequency} parameter $\ka$ in \eqref{b-bar-n} is small enough, i.e. if the ratio between the smoothening speed ($2^{\ka n}$) and the discretization speed ($2^n$) is small enough, then the sequence $(\widetilde{\<Psi>}^{\ka,n})_{n\geq 1}$ does converge to the actual solution $\<Psi>$, as $n\to \infty$. 

\

\noindent
\textbf{(3) Space-time discretization of the heat operator}. Let us go back to the interpretation of the process $\widetilde{\<Psi>}^{\ka,n}$ in \eqref{luxotilde-ka-n} as the solution of the heat equation
\begin{equation}\label{equa-luxotilde-ka-n}
\begin{cases}
\partial_t \widetilde{\<Psi>}^{\ka,n}=\Delta \widetilde{\<Psi>}^{\ka,n}+\partial_t \partial_x \widetilde{B}^{\ka,n}\, , \quad t\in [0,1], \ x\in \R,\\
\widetilde{\<Psi>}^{\ka,n}_0=0\, ,
\end{cases}
\end{equation}
and note that, for every fixed $n\geq 1$, $\partial_t \partial_x \widetilde{B}^{\ka,n}$ now stands for a (random) bounded function on $[0,1]\times \R$, as it can immediately be seen from \eqref{fir-app}.

\smallskip

In order to achieve our full-discretization objective, we still need to propose an approximation scheme for the heat dynamics. In fact, when dealing with a well-defined perturbation function (such as $\partial_t \partial_x \widetilde{B}^{\ka,n}$, for fixed $n\geq 1$), space-time discretizations of the heat operator can be derived from standard (deterministic) finite element methods, which ultimately generate basic linear iterative systems (see e.g. \cite{johnson,thomee}).

\smallskip

Our purpose in the third (and final) step of the study will thus be to carefully examine how these deterministic methods can be applied in our setting, and above all how the resulting approximation of \eqref{equa-luxotilde-ka-n} can be controlled in terms of the perturbation $\partial_t \partial_x \widetilde{B}^{\ka,n}$ (seen as an element of $L^\infty([0,1]\times \R)$). To implement this strategy, we will focus on the combination of a - space - Galerkin-type projection and a - time - implicit Euler scheme, a standard choice in the heat-approximation literature (see Section \ref{sec:space-discret} for a complete description).

\smallskip

Of course, the involvement of the $L^\infty$-norm of $\partial_t \partial_x \widetilde{B}^{\ka,n}$ in the corresponding estimates can only come at a price as far as $n$ is concerned (recall that $\partial_t \partial_x \widetilde{B}^{\ka,n}$ is only expected to be uniformly bounded in $n$ as a negative-order distribution). In light of our controls (see Proposition \ref{prop:discreti-s-t}), a possible way to counterbalance this $n$-loss will consist in the application of the Galerkin procedure on a finer grid than the one used to discretize $\widetilde{B}^{\ka,n}$. Let us slightly anticipate the next section and point out that this balancing phenomenon can be easily observed on the description \eqref{defi-psi-black} of our discretization scheme, by comparing the $(2^{-4n},2^{-2n})$ space-time mesh in \eqref{grid}-\eqref{defi-psi-black} with the $(2^{-n},2^{-n})$ discretization mesh used for $B$ in \eqref{involvement-b-ka-n-scheme}-\eqref{rectangular-increm-intro} (see also Remark \ref{rk:grid-b-intro}).

\

\subsection{Main discretization scheme}\label{subsec:descript-scheme}

With the previous description in mind, let us introduce our main discretization scheme for the solution $\<Psi>$. To this end, recall first that for all $\ka >0$ and $n\geq 0$, the notation $B^{\ka,n}$ refers to the smoothened version of $B$ defined by \eqref{b-bar-n}, and at the core of our interpretation of $\<Psi>$ (along Proposition \ref{sto}).

\smallskip

We consider the grid (note the notation with respect to \eqref{grid-intro})
\begin{equation}\label{grid}
t_i=t_i^n:=\frac{i}{2^{4n}}, \, i=0,\ldots,2^{4n}, \quad x_j=x^n_j:=\frac{j}{2^{2n}}, \, j\in \mathbb{Z} \, .
\end{equation}
Given $i=0,\ldots,2^{4n}$, we will denote by $\tilde{i}$ the (only) integer such that $\frac{\tilde{i}}{2^n}\leq t_i< \frac{\tilde{i}+1}{2^n}$. In the same way, given $j\in \mathbb{Z}$, we denote by $\dbtilde{j}$ the (only) integer such that $\frac{\dbtilde{j}}{2^n}\leq x_j< \frac{\dbtilde{j}+1}{2^n}$.

\smallskip

Then we introduce the set of functions $\Phi_j^n: \R \to \R$ ($j\in \mathbb{Z}$) along the formula
\begin{equation}
\Phi_j^n(x):=
\begin{cases}
2^{2n}(x-x_{j-1}) & \text{if}\ x \in [x_{j-1},x_j]\\
2^{2n}(x_{j+1}-x) & \text{if} \ x\in [x_j,x_{j+1}]\\
0& \text{otherwise} \, .
\end{cases}
\end{equation}
Using the above notation, we can describe our approximation process as follows. Namely, for all $i=0,\ldots,2^{4n}$ and $x\in \R$, we set
\begin{equation}\label{defi-psi-ka-n}
\bar{\<Psi>}^{\ka,n}_{t_i}(x):=\sum_{j=-N+1}^{N-1} \bar{\<Psiblack>}^j_{t_i} \Phi^n_j(x) \, ,
\end{equation}
where $N:=2^{3n+1}$ and the points $\bar{\<Psiblack>}^j_{t_i}$ ($i=0,\ldots,2^{4n}$, $j=-N+1,\ldots,N-1$) are given by the iteration procedure:
\begin{equation}\label{defi-psi-black}
\begin{cases}
4 \, \bar{\<Psiblack>}^{j}_{t_{i+1}}-\frac54\,  \bar{\<Psiblack>}^{j+1}_{t_{i+1}}=\bar{\<Psiblack>}^j_{t_i}+\frac14\, \bar{\<Psiblack>}^{j+1}_{t_i}+\frac{3}{2^{2n+1}}\cdot \delta B^{\ka,n}_{ij} &j=-N+1\\
\\
4\, \bar{\<Psiblack>}^{j}_{t_{i+1}}-\frac54\, \big[\bar{\<Psiblack>}^{j+1}_{t_{i+1}}+\bar{\<Psiblack>}^{j-1}_{t_{i+1}}\big]=\bar{\<Psiblack>}^j_{t_i}+\frac14\, \big[\bar{\<Psiblack>}^{j-1}_{t_i}+\bar{\<Psiblack>}^{j+1}_{t_i}\big]+\frac{3}{2^{2n+1}}\cdot \delta B^{\ka,n}_{ij} & j=-N+2,\ldots, N-2 \, ,\\
\\
4\, \bar{\<Psiblack>}^{j}_{t_{i+1}}-\frac54\, \bar{\<Psiblack>}^{j-1}_{t_{i+1}}=\frac14\, \bar{\<Psiblack>}^{j-1}_{t_i}+\bar{\<Psiblack>}^j_{t_i}+\frac{3}{2^{2n+1}}\cdot \delta B^{\ka,n}_{ij} &j=N-1
\end{cases}
\end{equation}
where we have set
\begin{equation}\label{involvement-b-ka-n-scheme}
\delta B^{\ka,n}_{ij}:=\1_{\{x_j>\frac{\dbtilde{j}}{2^n}\}}\sq_{\tilde{i},\dbtilde{j}}^n\! B^{\ka,n} +\1_{\{x_j=\frac{\dbtilde{j}}{2^n}\}}\bigg[\frac12 \sq_{\tilde{i},\dbtilde{j}-1}^n\!\! B^{\ka,n}+\frac12 \sq_{\tilde{i},\dbtilde{j}}^n\! B^{\ka,n} \bigg] \, .
\end{equation}
We recall that the notation $\square^n_{k,\ell}$ has been introduced in \eqref{rectangular-increment-intro} and refers to the rectangular increment
\begin{equation}\label{rectangular-increm-intro}
\sq_{k,\ell}^n\! B^{\ka,n}:=B^{\ka,n}_{\frac{k+1}{2^n},\frac{\ell+1}{2^n}}-B^{\ka,n}_{\frac{k+1}{2^n},\frac{\ell}{2^n}}-B^{\ka,n}_{\frac{k}{2^n},\frac{\ell+1}{2^n}}+B^{\ka,n}_{\frac{k}{2^n},\frac{\ell}{2^n}} \, .
\end{equation}

\

\begin{remark}\label{rk:grid-b-intro}
Observe that, through the $(\tilde{i},\dbtilde{j})$-notation, the above scheme only involves the rectangular increments of $B^{\ka,n}$ over the sub-grid $(\frac{i}{2^n},\frac{j}{2^n})_{i,j}$ of $(t_i,x_j)_{i,j}$. %Remember that this was indeed the starting purpose of our analysis, namely find a converging algorithm based on the (simulation of) the Gaussian vector 
%$$\{\sq_{k,\ell}^n B^{\ka,n}, \, k=0,\ldots,2^n-1, \ \ell=-2^{2n},\ldots,2^{2n}-1\}\, .$$
\end{remark}

\smallskip

\begin{remark}
The above specific calibration of the scheme (i.e. the choice of the specific grid $(t_i,x_j)$ in \eqref{grid}) is naturally derived from the subsequent theoretical convergence results. Note however that we do not expect this calibration to be optimal. In other words, the convergence property in the forthcoming Theorem \ref{main-theo} certainly remains true for coarser grids $t_i'=\frac{i}{2^{\la n}}$, $x_j'=\frac{j}{2^{\beta n}}$, with $1\leq \la <4$ and $1\leq \beta<2$ (possibly depending on $(H_0,H_1)$). See Proposition \ref{prop:discreti-s-t} and the related Remarks \ref{rk:non-optimal-choice} and \ref{rk:l-2-h} for further details about this choice of calibration.
\end{remark}

\smallskip

\subsection{Main convergence statement}

We are now in a position to present the main theoretical result of the paper, which fully justifies our consideration of the discretization scheme \eqref{defi-psi-ka-n}-\eqref{defi-psi-black} toward $\bar{\<Psi>}^{\ka,n}$.

%\begin{theorem}
%For every smooth compactly-supported function $\rho:\R\to [0,1]$ and for all $(H_0,H_1)\in (0,1)^2$ such that $0<2H_0+H_1<1$, there exist two deterministic constants $\ka_0=\ka_0(H_0,H_1)>0$, $\nu=\nu(H_0,H_1)>0$, and a random constant $c=c(\rho,(H_0,H_1))>0$ such that for all $0<\ka<\ka_0$ and $n\geq 1$, one has almost surely
%\begin{equation}\label{main-loose-estimate}
%\sup_{i=0,\ldots,2^{4n}}\big\| \rho \cdot \big\{ \bar{\<Psi>}^{\ka,n}_{t_i}- \<Psi>_{t_i} \big\} \big\|_{H^{-1}(\R)} \leq c\,  2^{-n\nu} \, .
%\end{equation}
%\end{theorem}

\begin{theorem}\label{main-theo}
Fix $(H_0,H_1)\in (0,1)^2$ such that $0<2H_0+H_1<1$, and set
\begin{equation}\label{defi-al-0}
\al_0:=1-(2H_0+H_1)>0 \, .
\end{equation}
Then, for every $\al>\al_0$, and for every smooth compactly-supported function $\rho:\R\to [0,1]$, there exist a deterministic constant $\nu=\nu((H_0,H_1),\al)>0$, as well as a random constant $C=C(\rho,(H_0,H_1),\al)>0$, such that for all $0<\ka\leq \frac{\al_0}{5}$ and $n\geq 1$, one has almost surely
\begin{equation}\label{main-loose-estimate}
\sup_{i=0,\ldots,2^{4n}}\big\| \rho \cdot \big\{ \bar{\<Psi>}^{\ka,n}_{t_i}- \<Psi>_{t_i} \big\} \big\|_{\ch^{-\al}(\R)} \leq C\,  2^{-n\nu} \, .
\end{equation}
\end{theorem}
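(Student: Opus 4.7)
The plan is to prove Theorem \ref{main-theo} by the standard triangle-inequality splitting along the three approximation steps described in the introduction. Writing
\begin{equation*}
\bar{\<Psi>}^{\ka,n}_{t_i}- \<Psi>_{t_i} = \bigl[\bar{\<Psi>}^{\ka,n}_{t_i}-\widetilde{\<Psi>}^{\ka,n}_{t_i}\bigr] + \bigl[\widetilde{\<Psi>}^{\ka,n}_{t_i}-\<Psi>^{\ka,n}_{t_i}\bigr] + \bigl[\<Psi>^{\ka,n}_{t_i}-\<Psi>_{t_i}\bigr],
\end{equation*}
each summand will be handled by one of the three main ingredients developed in the paper, all measured in the weighted norm $\|\rho \cdot \, \cdot\, \|_{\ch^{-\al}(\R)}$.

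First I would treat the rightmost term, $\<Psi>^{\ka,n}-\<Psi>$, which is precisely the content of Proposition \ref{sto}: the mollified solution converges a.s.\ to $\<Psi>$ in $\cac([0,1],\cw^{-\al,p})$ for every $\al>\al_0$. Inspection of that argument (built on the harmonizable representation \eqref{b-bar-n} and the frequency cut-off at scale $2^{\ka n}$) gives an actual rate of the form $2^{-\ka n \cdot \gamma}$ for some $\gamma=\gamma(\al,H_0,H_1)>0$; multiplication by the compactly-supported cut-off $\rho$ only improves matters. Next I would handle the middle term $\widetilde{\<Psi>}^{\ka,n}-\<Psi>^{\ka,n}$ using the noise-discretization estimates of Section \ref{sec:noise-discretization}, where $\partial_t\partial_x \widetilde{B}^{\ka,n}$ is the piecewise-constant average of $\partial_t\partial_x B^{\ka,n}$ on the sub-grid $(i/2^n,j/2^n)$ from \eqref{fir-app}. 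The relevant bound there is valid precisely under the condition $\ka\leq \al_0/5$, which is the source of the analogous restriction in Theorem \ref{main-theo}, and yields a rate of the form $2^{-n\nu'}$ in the same weighted $\ch^{-\al}$-topology.

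The remaining (and genuinely new) term to control is the leftmost one, $\bar{\<Psi>}^{\ka,n}-\widetilde{\<Psi>}^{\ka,n}$. Here I would view $\widetilde{\<Psi>}^{\ka,n}$ as the exact solution of the deterministic heat equation \eqref{equa-luxotilde-ka-n} with smooth bounded right-hand side $\partial_t\partial_x\widetilde{B}^{\ka,n}\in L^\infty([0,1]\times \R)$, and $\bar{\<Psi>}^{\ka,n}$ as the output of its Galerkin + implicit-Euler approximation on the refined grid \eqref{grid} using the hat functions $\Phi^n_j$; the iterative system \eqref{defi-psi-black} is exactly the corresponding linear scheme. Proposition \ref{prop:discreti-s-t} then gives a quantitative error bound of the form
\begin{equation*}
\sup_{i}\,\bigl\| \rho \cdot \bigl(\bar{\<Psi>}^{\ka,n}_{t_i}-\widetilde{\<Psi>}^{\ka,n}_{t_i}\bigr)\bigr\|_{\ch^{-\al}(\R)} \, \leq \, C(\rho)\, 2^{-n\nu''}\, \bigl\|\partial_t\partial_x\widetilde{B}^{\ka,n}\bigr\|_{L^\infty([0,1]\times\R)},
\end{equation*}
where the gain $2^{-n\nu''}$ stems from the finer mesh $(2^{-4n},2^{-2n})$ used in \eqref{grid} compared with the $(2^{-n},2^{-n})$-mesh of $\widetilde{B}^{\ka,n}$. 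It then suffices to bound the $L^\infty$-norm of the piecewise-constant field $\partial_t\partial_x\widetilde{B}^{\ka,n}$ almost surely by a polynomial in $2^n$, which is an easy Gaussian tail estimate on the finite family $\{\sq^n_{i,j}B^{\ka,n}\}$ on the support of $\rho$.

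Combining the three bounds, each of the form $C\cdot 2^{-n\nu_j}$ with $\nu_j>0$ under the assumption $\ka\leq \al_0/5$, we conclude with $\nu:=\min_j \nu_j$. The main obstacle in this program is the correct tuning of the scheme in step three: the refined grid must be taken fine enough for the $2^{-n\nu''}$ gain in Proposition \ref{prop:discreti-s-t} to absorb the polynomial $L^\infty$-loss of $\partial_t\partial_x\widetilde{B}^{\ka,n}$, while remaining compatible with the constraint $\ka\leq\al_0/5$ dictated by the noise-discretization step. This balance is precisely the motivation for the specific choice of grid \eqref{grid} and of the admissible range of $\ka$ in the statement.
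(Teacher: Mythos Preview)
Your proposal is correct and follows exactly the paper's strategy: the proof of Theorem \ref{main-theo} is obtained by the triangle-inequality splitting you describe, with the three pieces controlled respectively by Proposition \ref{sto}, Proposition \ref{prop:discreti-noise} (whence the restriction $\ka\leq \al_0/5$), and Proposition \ref{prop:discreti-s-t} (where the $L^\infty$-bound on $\partial_t\partial_x\widetilde{B}^{\ka,n}$ is supplied by Lemma \ref{lem:trois-le} and is already absorbed into the final estimate \eqref{appli-calib}). One small correction: the Gaussian bound on the increments $\sq^n_{i,j}B^{\ka,n}$ must be uniform over the full index range $j=-2^{2n},\ldots,2^{2n}-1$, not merely on $\mathrm{supp}\,\rho$, since the heat semigroup is nonlocal; and the Galerkin estimate in Proposition \ref{prop:discreti-s-t} is actually stated in $L^2(\R)$, which of course dominates the $\ch^{-\al}$-norm.
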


\

The above result stems from the combination of the estimates obtained in Proposition \ref{sto} (noise smoothening), Proposition \ref{prop:discreti-noise} (noise discretization) and Proposition \ref{prop:discreti-s-t} (space-time discretization of the heat operator). The retriction on $\ka$, namely $0<\ka\leq \frac{\al_0}{5}$, can be seen as a consequence of some balance strategy between the noise-smoothening step ($B\mapsto B^{\ka,n}$) and the noise-discretization step ($\partial_t\partial_x B^{\ka,n} \mapsto  \partial_t \partial_x \widetilde{B}^{\ka,n} $), as it will be detailed in Section \ref{sec:noise-discretization}.

\

Let us complete the statement of Theorem \ref{main-theo} with a few additional remarks. 

\begin{remark}
The involvement of a (arbitrary) space cut-off function $\rho$ in \eqref{main-loose-estimate} must be understood as a way to express \enquote{local convergence} in the space $\ch^{-\al}(\R)$ of negative-order distributions. Observe indeed that if $\al=0$, that is if one considers $\ch^{-\al}(\R)=L^2(\R)$, then by taking $\rho:\R\to [0,1]$ equal to $1$ on any given compact set $K\subset \R$, one has of course $\|f\|_{L^2(K)}\leq \|\rho \cdot f\|_{L^2(\R)}$ for any $f\in L^2_{loc}(\R)$, and accordingly estimates such as \eqref{main-loose-estimate} would entail $L^2$-convergence on compact sets. Note also that the convergence results and its proof would certainly remain true for a more general class of weights $\rho$ on $\R$, with support possibly non compact (e.g., for a Gaussian weight $\rho(x):=e^{-x^2}$).
\end{remark}

\smallskip

\begin{remark}\label{rk:ka-nu}
Some possible explicit value for $\nu$ can be derived from the statements of Proposition \ref{sto}, Proposition \ref{prop:discreti-noise} and Proposition \ref{prop:discreti-s-t}. For instance, with the notation $\al_0$ introduced in \eqref{defi-al-0}, we can take
$$\nu:=\min\bigg( \frac{2\al_0 H_0}{5},\frac{\al_0 H_1 }{5},\frac{\al_0 (\al-\al_0)}{5},1-\al_0,\frac{\al_0}{5}\bigg) \, .$$
In any case, we do not expect the subsequent analysis to provide us with an \enquote{optimal} speed of convergence for the proposed scheme \eqref{defi-psi-black} (say for fixed $\al >\al_0$ in the left-hand side of \eqref{main-loose-estimate}), due to our consideration of a deterministic strategy and a $L^2(\R)$-norm in the space-time heat discretization step (see Proposition \ref{prop:discreti-s-t} and the related Remark \ref{rk:l-2-h} for further details).
\end{remark}

%\begin{remark}\label{rk:non-optimal}
%In several recent pathwise approaches to SPDEs (regularity structures, paracontrolled approach,...), the linear solution $\<Psi>$ at the center of our study is known to play the - fundamental - role of a \enquote{first-order} expansion for solutions of more sophisticated singular SPDE models. For instance, when looking at the non-linear model
%\begin{equation}\label{nonlinear-perturb}
%\partial_t u=\Delta u+u^2+\sigma(u) \, \dot{B}\, , \quad t\in [0,1], \ x\in \R\, ,
%\end{equation}
%or
%\begin{equation}\label{equa-quad}
%\partial_t u=\Delta u+u^2+\dot{B}\, , \quad t\in [0,1], \ x\in \R\, ,
%\end{equation}
%in the rough regime \eqref{cond:h-0-h-1-intro}, the solution $u$ can somehow be expanded around $\<Psi>$ toward some \enquote{second-order} well-defined remainder, that is one rather focuses on the equation satisfied by the auxiliary process $v:=u-\<Psi>$ (see e.g. \cite{deya-heat,schaeffer} for more details in the parabolic case, or \cite{deya-wave,deya-schaeffer-thomann} for similar strategies in the wave or Schr{\"o}dinger situation).\\
%\indent Thus, we consider the present considerations about the simulation of $\<Psi>$ as an important first step toward the simulation of such non-linear dynamics (in the rough regime), which we plan to investigate in a future work.
%\end{remark}

\

The rest of the paper is organized as follows. In Section \ref{sec:def-solution}, we examine the noise-smoothening procedure toward a proper definition of $\<Psi>$. Then, in Section \ref{sec:noise-discretization}, we initiate the discretization scheme through the transition from $\partial_t\partial_x B^{\ka,n}$ to $\partial_t \partial_x \widetilde{B}^{\ka,n} $. The theoretical analysis is completed in Section \ref{sec:space-discret}, with the space-time discretization of the heat operator. Finally, we have provided, in Section \ref{sec:numerical}, a few results and comments about the (partial) simulation of the algorithm \eqref{defi-psi-ka-n}-\eqref{defi-psi-black}.

\

From a technical point of view, the subsequent analysis relies on the combination of fractional calculus with (relatively) standard discretization techniques. For pedagogical purposes, we have endeavored to provide many details at every step of these investigations, which hopefully can make the study accessible to a large audience.

\

\section{Definition of the solution}\label{sec:def-solution}

For the sake of completeness, let us first recall the definition of the fractional sheet, that is the field at the core of this study.

\begin{definition}\label{defi:fractional-sheet}
On a complete probability space $(\Omega,\mathfrak{F},\mathbb{P})$, we call a fractional sheet of Hurst indexes $H_0,H_1\in (0,1)^2$ on $[0,1]\times \R$ any centered Gaussian field $B:\Omega \times ([0,1]\times \R)\to \R$ with covariance function given by the formula: for all $s,t\in [0,1]$ and $x,y\in \R$,
$$\mathbb{E}\big[ B_s(x) B_t(y)\big]=R_{H_0}(s,t) R_{H_1}(x,y) \ , \ \text{where} \ R_H(a,b):=\frac12 \big\{|a|^{2H}+|b|^{2H}- |a-b|^{2H}\big\} \ .$$
\end{definition}

When $H_0=H_1=\frac12$, the above definition of the fractional sheet is known to coincide with the one of a standard Brownian field. In any case, that is for every $(H_0,H_1)\in (0,1)^2$, it can be shown that $B$ is not a differentiable field, and accordingly the definition of the noise $\dot{B}$ in \eqref{noise-as-deriv} can only be understood as a general distribution. Owing to this lack of regularity, the interpretation of $\<Psi>$ as the convolution of $\dot{B}$ with the (singular) heat kernel $G$ is clearly not a trivial issue, and we propose to address this question through a regularization procedure. Thus, for all fixed $\ka >0$ and $n\geq 0$, we consider the smooth approximation $B^{\ka,n}$ of $B$ provided by \eqref{b-bar-n}. Using the so-called harmonizable representation of $B$, i.e.
\begin{equation}\label{harmoni-rep}
B_t(x)=c_{H_0} c_{H_1} \int_{\R}\int_{\R} \widehat{W}(d\xi,d\eta) \, \frac{e^{\imath \xi t}-1}{|\xi|^{H_0+\frac12}}\frac{e^{\imath \eta x}-1}{|\eta|^{H_1+\frac12}} \, ,
\end{equation}
it can indeed be shown that for every $\ka>0$, one has almost surely
$$B^{\ka,n} \stackrel{n\to\infty}{\longrightarrow} B\quad \text{in}\ \cac([0,1]\times \R;\R)\, .$$

\smallskip

Our objective now is to study the convergence of the sequence of (classical) solutions associated with $(B^{\ka,n})_{n\geq 1}$, that is the sequence of well-defined processes
\begin{equation}\label{def:psi-ka-n}
\<Psi>^{\ka,n}_{t}(x):=(G\ast \partial_t\partial_x B^{\ka,n})_t(x) \, .
\end{equation}
To do so, we will appeal to the following scale of fractional Sobolev spaces.

\smallskip

\noindent
\textbf{Notation.} For all $s\in \R$ and $1\leq p<\infty$, let the notation $\mathcal{W}^{s,p}$ refer the Bessel-potential
\begin{equation}\label{bessel-potential-space}
\mathcal{W}^{s,p}=\mathcal{W}^{s,p}(\mathbb{R}):=\left\{f\in \mathcal{S}'(\mathbb{R}):\ \|f |\mathcal{W}^{s,p}(\mathbb{R}^d)\|=\|\mathcal{F}^{-1}(\{1+|.|^2\}^{\frac{s}{2}}\mathcal{F}f) |L^p(\mathbb{R})\|  <\infty \right\} \, .
\end{equation}
Also we will consider the spaces 
\begin{equation}
\ch^{s}:=\mathcal{W}^{s,2},\quad \text{for every}\ s\in \R\, .
\end{equation}

\

Using the above notation, the result at the basis of our interpretation of \eqref{equa-luxo} can be stated as follows.

\begin{proposition}\label{sto}
Fix $(H_0,H_1)\in (0,1)^2$ such that $0<2H_0+H_1<1$, and set
\begin{equation}\label{assump-gene-al}
\alpha_0:= 1-\big(2H_0+H_1\big)>0\, .
\end{equation} 
Then, for every $\ka > 0$ and for every cut-off function $\rho\in \cac_c^\infty(\R)$ (i.e., smooth and compactly-supported), the sequence $(\rho\cdot \<Psi>^{\ka,n})_{n  \geq 0}$ converges in the space $L^{2p}(\Omega;\mathcal{C}([0,1]; \mathcal{W}^{-\al,2p}(\mathbb{R})))$, for all $\al>\al_0$ and $p\geq 1$. Moreover, the limit, that we denote by $\rho\cdot\<Psi>$, does not depend on $\ka$. 

\smallskip

Finally, for all $\al>\al_0$, $\ka >0$, $p\geq 1$, $n\geq 1$ and $\varsigma>0$ such that
\begin{equation}\label{cond-varsigma}
0<\varsigma <\min\big( 2H_0,H_1,\al-\al_0\big),
\end{equation}
one has almost surely
\begin{equation}\label{speed-converg-def}
\sup_{t\in [0,1]}\big\| \rho \cdot\<Psi>^{\ka,n}_t-\rho \cdot\<Psi>_t\big\|_{\ch^{-\al}(\mathbb{R})} \lesssim 2^{-\varsigma\ka n } ,
\end{equation}
where the proportional constant does not depend on $n$.
\end{proposition}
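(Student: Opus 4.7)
The plan is to work entirely in Fourier variables, leveraging the harmonizable representation \eqref{harmoni-rep}. Solving \eqref{equa-luxo-ka-n} by Duhamel's formula in the spatial frequency variable and using the elementary identity
\begin{equation*}
\int_0^t e^{-\eta^2(t-s)}\,(i\xi)\,e^{i\xi s}\,ds \,=\, (i\xi)\,\frac{e^{i\xi t}-e^{-\eta^2 t}}{i\xi+\eta^2},
\end{equation*}
I would first derive the explicit Wiener-integral representation
\begin{equation*}
\<Psi>^{\ka,n}_t(x) \,=\, c_{H_0}c_{H_1}\!\int_{|\xi|\leq 2^{2\ka n}}\!\int_{|\eta|\leq 2^{\ka n}}\!\widehat W(d\xi,d\eta)\,\frac{(i\xi)(i\eta)}{i\xi+\eta^2}\,\frac{e^{i\xi t}-e^{-\eta^2 t}}{|\xi|^{H_0+\frac12}|\eta|^{H_1+\frac12}}\,e^{i\eta x}.
\end{equation*}
Since $\rho\cdot \<Psi>^{\ka,n}_t$ is a centered Gaussian field, a combination of Fubini and the Gaussian moment equivalence $\mathbb{E}[|X|^{2p}]\lesssim (\mathbb{E}[|X|^2])^p$ reduces the control of the $L^{2p}(\Omega;\cw^{-\al,2p})$-norm to a deterministic bound on $\mathbb{E}\bigl[\|\rho\cdot \<Psi>^{\ka,n}_t\|_{\ch^{-\al}}^2\bigr]$, up to a $p$-th root.

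The heart of the proof is then the Cauchy estimate
\begin{equation*}
\sup_{t\in[0,1]}\,\mathbb{E}\bigl[\|\rho\cdot(\<Psi>^{\ka,m}_t-\<Psi>^{\ka,n}_t)\|_{\ch^{-\al}}^2\bigr] \,\lesssim\, 2^{-2\varsigma\ka n}, \qquad m\geq n.
\end{equation*}
The corresponding difference is a Wiener integral supported on the annular frequency region $A_{n,m}:=\{2^{2\ka n}<|\xi|\leq 2^{2\ka m}\}\cup\{2^{\ka n}<|\eta|\leq 2^{\ka m}\}$. Expanding the squared $\ch^{-\al}$-norm via Plancherel---the multiplication by $\rho$ turning into convolution with $\hat\rho$ on the Fourier side---and applying the Wiener isometry yields a deterministic integral of the schematic form
\begin{equation*}
\int_{A_{n,m}}\!\frac{|\xi|^2\eta^2}{\xi^2+\eta^4}\cdot\frac{|e^{i\xi t}-e^{-\eta^2 t}|^2}{|\xi|^{2H_0+1}|\eta|^{2H_1+1}}\cdot\Bigl(\int_{\R}\frac{|\hat\rho(\zeta-\eta)|^2}{(1+\zeta^2)^{\al}}\,d\zeta\Bigr)\,d\xi\,d\eta.
\end{equation*}
The Schwartz decay of $\hat\rho$ allows, via a standard Peetre-type inequality, to majorize the inner $\zeta$-integral by a constant multiple of $(1+\eta^2)^{-\al}$. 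I would then split the remaining $(\xi,\eta)$-domain along the parabolic dichotomy $\{|\xi|\leq\eta^2\}$ versus $\{|\xi|\geq\eta^2\}$ and use the elementary bounds $|e^{i\xi t}-1|\lesssim|\xi|^{\varsigma}$ and $|1-e^{-\eta^2 t}|\lesssim\eta^{2\varsigma}$ to extract the gain $2^{-2\varsigma\ka n}$ from the cut-off. A careful power-counting in each regime shows that the remaining integral is finite precisely when $\varsigma<\min(2H_0,H_1,\al-\al_0)$, the full condition $2H_0+H_1<1$ together with $\al>\al_0$ being used to ensure integrability at infinity.

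Uniformity in $t$ is obtained by repeating the same Fourier computation on the time-increment $(\<Psi>^{\ka,m}-\<Psi>^{\ka,n})_t-(\<Psi>^{\ka,m}-\<Psi>^{\ka,n})_s$, replacing the factor $e^{i\xi t}-e^{-\eta^2 t}$ by its temporal increment, which yields a H\"older-type refinement of the form $2^{-2\varsigma\ka n}|t-s|^{2\gamma}$ for some small $\gamma>0$. A Kolmogorov continuity argument in the separable Banach space $\cw^{-\al,2p}$, combined with Gaussian hypercontractivity to pass from the second moment to arbitrary $L^{2p}$-moments, then converts this into the supremum estimate and delivers the convergence in $L^{2p}(\Omega;\cac([0,1];\cw^{-\al,2p}))$. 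The almost-sure bound \eqref{speed-converg-def} with a random proportional constant follows by a standard Chebyshev--Borel-Cantelli argument, applied with $p$ large enough that the $L^{2p}$-rate, taken at some $\varsigma'>\varsigma$ still in the admissible range, becomes summable in $n$. The $\ka$-independence of the limit is obtained by the very same estimate applied to $\<Psi>^{\ka,n}-\<Psi>^{\ka',n}$ for arbitrary $\ka,\ka'>0$.

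The main technical obstacle will be the power-counting above: the parabolic anisotropy of the multiplier $\frac{(i\xi)(i\eta)}{i\xi+\eta^2}$, combined with a Bessel weight that decays only in the spatial frequency $\eta$, renders the $\xi$-integral at infinity delicate, and the gain produced by the noise cut-offs---at heights $2^{2\ka n}$ in $\xi$ and $2^{\ka n}$ in $\eta$ simultaneously---has to be extracted in a balanced way from a careful parabolic splitting of the frequency domain.
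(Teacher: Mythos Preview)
Your approach is essentially that of the paper: the explicit multiplier $\frac{(i\xi)(i\eta)}{i\xi+\eta^2}(e^{i\xi t}-e^{-\eta^2 t})$ is exactly $-\xi\eta\,\gamma_t(\xi,|\eta|)$ in the paper's notation (see \eqref{formula-heuris} and \eqref{defi-ga-t}), your Peetre-type bound on the $\zeta$-integral is the content of Lemma~\ref{lem:handle-chi-correc}, and the Kolmogorov--GRR--Borel--Cantelli conclusion is identical. The paper works directly with the time increment $\gamma_{s,t}$ and packages the $\xi$-integration into the standalone estimate of Lemma~\ref{lem:tec} rather than your direct parabolic splitting, but these are organizational rather than substantive differences.

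One correction, however: the bounds $|e^{i\xi t}-1|\lesssim|\xi|^{\varsigma}$ and $|1-e^{-\eta^2 t}|\lesssim\eta^{2\varsigma}$ are \emph{not} what extract the gain $2^{-2\varsigma\ka n}$---on the annulus $A_{n,m}$ the frequencies are large, so interpolating the numerator in this way would produce a loss, not a gain. The decay in $n$ comes instead from the trivial algebraic observation that $|\xi|^{-\varsigma}\leq 2^{-2\varsigma\ka n}$ on $\{|\xi|\geq 2^{2\ka n}\}$ (respectively $|\eta|^{-2\varsigma}\leq 2^{-2\varsigma\ka n}$ on $\{|\eta|\geq 2^{\ka n}\}$), which lets you write $|\xi|^{1-2H_0}=|\xi|^{-\varsigma}\cdot|\xi|^{1-2(H_0-\varsigma/2)}$ and then verify that the residual integral, now with the slightly reduced exponent $H_0-\varsigma/2$ (respectively $H_1-\varsigma$), remains finite. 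This is precisely where the three constraints $\varsigma<2H_0$, $\varsigma<H_1$, and $\varsigma<\al-\al_0$ in \eqref{cond-varsigma} arise, and it is how the paper proceeds (see the treatment of $\mathbb{I}^{\ka,n}$ and $\mathbb{II}^{\ka,n}$ in \eqref{decompos-i-ii}).
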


The proof of Proposition \ref{sto} will be developed in Section \ref{subsec:preliminary} and Section\ref{subsec:proof-sto-1} below.

\

Observe that this result actually gives birth to a family of processes 
$$\big\{\rho \cdot \<Psi> \in L^{2p}(\Omega;\mathcal{C}([0,1]; \mathcal{W}^{-\al,2p}(\mathbb{R}))), \ \rho \in \cac_c^\infty(\R)\big\}\, . $$
For the sake of completeness, we can then patch together those local solutions into a single distribution $\<Psi>$. The details of this canonical procedure can be found in \cite[Section 2.5]{deya-schaeffer-thomann}, and we will only sum up the result in the following Proposition-Definition (to simplify the presentation, we fix $p\geq 2$ and $\al > \al_0$, and we denote by $\cf([0,1];\cd'(\R))$ the set of distributional-valued functions on $[0,1]$).

\begin{propdef}\label{prop:defi-psi-glo}
Let $\cp$ stand for the set of sequences $\si=(\si_k)_{k\geq 1}$, where, for each $k\geq 1$, $\si_k:\R \to \R$ is a smooth function such that
\begin{equation*}
    \si_k(x)=  \left\{
    \begin{array}{ll}
		1 & \mbox{if}\  |x|\leq k \, ,\\
        0 & \mbox{if} \ |x|\geq k+1\, .  
    \end{array}
\right.
\end{equation*}
Then, for every $\si\in \cp$, there exists a subspace $\Omega^{(\si)}\subset \Omega$ of full measure $1$ and an element
$$\<Psi>^{(\si)} :\Omega^{(\si)} \to \cf([0,1];\cd'(\R)) $$
such that the following assertions hold true:

\smallskip

\noindent
$(i)$ For any (space) cut-off function $\rho\in \cac_c^\infty(\R)$ and for any $\ka >0$, one has, on $\Omega^{(\si)}$,
$$\rho \cdot\<Psi>^{\ka,n} \underset{n \rightarrow \infty}\rightarrow \rho\cdot \<Psi>^{(\si)} \quad \text{in}\ \ \mathcal{C}([0,1]; \mathcal{W}^{-\al,p}(\mathbb{R}^d)) \, .$$

\smallskip

\noindent
$(ii)$ If $\si,\ga \in \cp$, then one has
$$
\<Psi>^{(\si)}=\<Psi>^{(\ga)} \quad \text{on} \ \ \Omega^{(\si)}\cap \Omega^{(\ga)} \, .
$$

\smallskip

Owing to these two properties, we define $\<Psi>$ as $\<Psi>:=\<Psi>^{(\si)}$ for some fixed (arbitrary) sequence $\si\in \cp$, and we call this random element in $\cf([0,1];\cd'(\R))$ the \emph{mild solution of equation \eqref{equa-luxo}}.
\end{propdef}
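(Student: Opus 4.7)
The strategy is a standard patching of local solutions: Proposition~\ref{sto} has already absorbed all the analytic work, so what remains is to splice the various $\rho\cdot\<Psi>$ into one distributional field in a consistent way, which is essentially bookkeeping.

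First I would apply Proposition~\ref{sto} to each cut-off $\si_k$ individually, for $k\geq 1$. This yields, for every $k$, a full-measure event $\Omega_k\subset\Omega$ on which $\si_k\cdot\<Psi>^{\ka,n}$ converges in $\cac([0,1];\cw^{-\al,p}(\R))$ to some limit $Y^{(k)}$; the almost-sure bound \eqref{speed-converg-def}, combined with an intersection over a countable dense set of parameters $\ka$, allows us to arrange the convergence simultaneously for every $\ka>0$, while Proposition~\ref{sto} ensures that the limit $Y^{(k)}$ is independent of $\ka$. Setting $\Omega^{(\si)}:=\bigcap_{k\geq 1}\Omega_k$ still gives a full-measure event.

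On $\Omega^{(\si)}$, I would then define $\<Psi>^{(\si)}_t\in\cd'(\R)$ by the prescription
\[
\langle \<Psi>^{(\si)}_t,\vp \rangle := \langle Y^{(k)}_t,\vp \rangle \qquad \text{for any } \vp\in\cac_c^\infty(\R) \text{ with } \mathrm{supp}(\vp)\subset \{|x|\leq k\}.
\]
Consistency between different admissible indices is immediate: for $k'\geq k$ and $\vp$ as above we have $\si_k\vp=\vp=\si_{k'}\vp$, so the smooth function $\<Psi>^{\ka,n}_t$ satisfies $\langle \si_k\<Psi>^{\ka,n}_t,\vp\rangle = \langle \<Psi>^{\ka,n}_t,\vp\rangle = \langle \si_{k'}\<Psi>^{\ka,n}_t,\vp\rangle$ for every $n$, and passing to the limit yields $\langle Y^{(k)}_t,\vp\rangle = \langle Y^{(k')}_t,\vp\rangle$. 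Hence $\<Psi>^{(\si)}_t$ is a well-defined element of $\cd'(\R)$, and $t\mapsto\<Psi>^{(\si)}_t$ belongs to $\cf([0,1];\cd'(\R))$.

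For property $(i)$, given $\rho\in\cac_c^\infty(\R)$ I pick $k$ with $\mathrm{supp}(\rho)\subset \{|x|\leq k\}$, so that $\rho=\rho\si_k$; invoking the classical multiplier statement that pointwise multiplication by a smooth compactly-supported function is a bounded operator on $\cw^{-\al,p}(\R)$, the convergence $\si_k\<Psi>^{\ka,n}\to Y^{(k)}$ in $\cac([0,1];\cw^{-\al,p})$ transfers at once to $\rho\<Psi>^{\ka,n}=\rho\cdot(\si_k\<Psi>^{\ka,n})\to \rho\cdot Y^{(k)}=\rho\cdot\<Psi>^{(\si)}$. Property $(ii)$ follows by the same reasoning: on $\Omega^{(\si)}\cap\Omega^{(\ga)}$ and for any $\vp$, choose $k$ large enough that $\si_k$ and $\ga_k$ both equal $1$ on $\mathrm{supp}(\vp)$; then $\langle\<Psi>^{(\si)}_t,\vp\rangle$ and $\langle\<Psi>^{(\ga)}_t,\vp\rangle$ both coincide with $\lim_n\langle\<Psi>^{\ka,n}_t,\vp\rangle$, so the two distributions agree pointwise in $t$. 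The only non-bookkeeping input is the multiplier property on $\cw^{-\al,p}$, which is entirely classical; there is no genuine analytic obstacle here.
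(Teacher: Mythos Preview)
The paper does not give its own proof of this statement; it refers to \cite[Section~2.5]{deya-schaeffer-thomann} for the details of what it calls a ``canonical procedure''. Your argument is precisely that standard patching construction and is correct.
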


Thanks to the above result, we are now endowed with a \emph{globally} defined solution $\<Psi>$, which locally coincides with the limits exhibited in Proposition \ref{sto}. This being said, in our subsequent investigations, we will only focus on \emph{local} convergence to $\<Psi>$, as it can be seen from our main estimate \eqref{main-loose-estimate}.

\

Before we turn to the proof of Proposition \ref{sto}, let us complete the statement with two remarks.

\begin{remark}\label{rk:mollifying}
The consideration of the \enquote{Fourier-type} approximation $B^{\ka,n}$ of $B$ is quite natural in our fractional Sobolev setting, and it will indeed readily provide us with a convenient covariance formula for the process $\<Psi>^{\ka,n}$ (see Proposition \ref{prop:cova-luxo-n}).  

\smallskip

Another usual choice for the approximation of $B$ is the one derived from a mollifying procedure, that is one takes $B^{\vp,\ka,n}:=\vp_{\ka,n} \ast B$, where $\vp_{\ka,n}(s,x):=2^{3\ka n}\vp(2^{2\ka n}s,2^{\ka n}x)$, for some mollifier $\vp:\R^2\to\R$. In fact, our approximation $B^{\ka,n}$ can somehow be regarded as a particular case of this general mollifying procedure. Indeed, starting from the representation \eqref{harmoni-rep} of $B$, one can write (at least formally)
\begin{align*}
(\partial_t\partial_x B^{\vp,\ka,n})(t,x)&=(\vp_{\ka,n} \ast \partial_t\partial_x B)(t,x)\\
&=c_{H_0}c_{H_1} \iint ds dy \, \vp_{\ka,n}(s,y)\iint \widehat{W}(d\xi,d\eta) \, (-\xi \eta)\frac{e^{\imath \xi (t-s)}}{|\xi|^{H_0+\frac12}}\frac{e^{\imath \eta (x-y)}}{|\eta|^{H_1+\frac12}} \\
&=c_{H_0}c_{H_1}  \iint \widehat{W}(d\xi,d\eta) \, \widehat{\vp_{\ka,n}}(\xi,\eta) (-\xi \eta)\frac{e^{\imath \xi t}}{|\xi|^{H_0+\frac12}}\frac{e^{\imath \eta x}}{|\eta|^{H_1+\frac12}}\\
&=c_{H_0}c_{H_1}  \iint \widehat{W}(d\xi,d\eta) \, \widehat{\vp}(2^{-2\ka n}\xi,2^{-\ka n}\eta) (-\xi \eta)\frac{e^{\imath \xi t}}{|\xi|^{H_0+\frac12}}\frac{e^{\imath \eta x}}{|\eta|^{H_1+\frac12}}\, ,
\end{align*}
and thus, picking $\vp$ such that $\widehat{\vp}(\xi,\eta)=\1_{\{|\xi|\leq 1\}} \1_{\{|\eta|\leq 1\}}$, one recovers the approximation $\partial_t\partial_x B^{\ka,n}$ of the noise. We think that, at the price of a few technical modifications, it is certainly possible to extend the whole subsequent analysis to a more general class of mollifying approximations $B^{\vp,\ka,n}$.
\end{remark}

\begin{remark}\label{rk:biblio}
The above construction procedure of the solution $\<Psi>$, based on the specific approximation $B^{\ka,n}$ of $B$, has already been implemented in \cite{deya-wave,deya-wave-2} for the fractional wave equation, and in \cite{deya-schaeffer-thomann} for the Schr{\"o}dinger fractional equation. To be more specific, in the three references \cite{deya-wave,deya-wave-2,deya-schaeffer-thomann}, the authors' analysis only relies on the consideration of $B^{1,n}$, i.e. $B^{\ka,n}$ with $\ka=1$. Letting $\ka$ vary (in Section \ref{sec:noise-discretization}) will here give us the possibility to maintain a certain balance within the two-step transformation process of the noise (see Remark \ref{rk:ka}). 
\end{remark}

\smallskip

\subsection{Preliminary considerations}\label{subsec:preliminary}

Starting from the expression \eqref{b-bar-n} of $B^{\ka,n}$, and setting $c_H:=c_{H_0}c_{H_1}$, we can first expand the solution $\<Psi>^{\ka,n}=G\ast \dot{B}^{\ka,n}$ as
\small
\begin{align*}
&\<Psi>^{\ka,n}_t(x)=\int_{0}^{t}ds\int_{\mathbb{R}}dy\, G_{t-s}(x-y)(\partial_t\partial_x B^{\ka,n})_s(y)\\ 
&=-c_H\int_{0}^{t}ds\int_{\mathbb{R}}dy\, \int_{|\xi|\leq 2^{2\ka n}}\int_{|\eta|\leq 2^{\ka n}}G_{t-s}(x-y)\frac{\xi}{|\xi|^{H_0+\frac{1}{2}}}\frac{\eta}{|\eta|^{H_1+\frac{1}{2}}}e^{\imath\xi s}e^{\imath \eta y }\, \widehat{W}(d\xi,d\eta)\\
&= -c_H\int_{|\xi|\leq 2^{2\ka n}}\int_{|\eta|\leq 2^{\ka n}}\frac{\xi}{|\xi|^{H_0+\frac{1}{2}}}\frac{\eta}{|\eta|^{H_1+\frac{1}{2}}}e^{\imath\eta x}\left[\int_{0}^{t}ds \, e^{\imath\xi s}\left(\int_{\mathbb{R}}dy\, G_{t-s}(x-y)e^{-\imath \eta (x-y )}\right)\right]\widehat{W}(d\xi,d\eta)\\
&= -c_H\int_{|\xi|\leq 2^{2\ka n}}\int_{|\eta|\leq 2^{\ka n}}\frac{\xi}{|\xi|^{H_0+\frac{1}{2}}}\frac{\eta}{|\eta|^{H_1+\frac{1}{2}}}e^{\imath\eta x }\left[\int_{0}^{t}ds \, e^{\imath\xi(t-s)}\left(\int_{\mathbb{R}}dy\, G_{s}(y)e^{-\imath \eta y }\right)\right]\widehat{W}(d\xi,d\eta)\, ,
\end{align*}
\normalsize
and so we obtain the representation
\begin{equation}\label{formula-heuris}
\<Psi>^{\ka,n}_t(x)=-c_H\int_{|\xi|\leq 2^{2\ka n}}\int_{|\eta|\leq 2^{\ka n}}\frac{\xi}{|\xi|^{H_0+\frac{1}{2}}}\frac{\eta}{|\eta|^{H_1+\frac{1}{2}}}e^{\imath  \eta x}\gamma_t(\xi,|\eta|)\, \widehat{W}(d\xi,d\eta)\, ,
\end{equation}
where for all $t\geq 0$, $\xi \in \mathbb{R}$ and $r>0$, the quantity $\gamma_t(\xi,r)$ is defined by
\begin{equation}\label{defi-ga-t}
\gamma_t(\xi,r):=\int_{0}^{t}ds \, e^{\imath\xi(t-s)}\widehat{G_{s}}(r)=e^{\imath\xi t}\int_{0}^{t}e^{- s \{r^2+\imath \xi\} }ds \, .
\end{equation}

The following proposition is an immediate consequence of \eqref{formula-heuris}.
\begin{proposition}\label{prop:cova-luxo-n}
The covariance of the centered complex Gaussian process 
$$\big\{\<Psi>^{\ka,n}_s(x), \, \ka >0, \, n\geq 1, \, s\geq 0, \, x\in \R \big\}$$
is given by the formulas: for all $\ka,\ka'>0$, $n,m\geq 1$, $s, t\geq 0$ and $x, y \in \mathbb{R}^d$,
\begin{equation}\label{cova-luxo}
\mathbb{E}\Big[\<Psi>^{\ka,n}_s(x)\overline{\<Psi>^{\ka',m}_t(y)}\Big]=c_H^2\int_{(\xi,\eta)\in D^{\ka,n} \cap D^{\ka',m}}\frac{d\xi}{|\xi|^{2H_0-1}}\frac{d\eta}{|\eta|^{2H_1-1}}\gamma_{s}(\xi,|\eta|)\overline{\gamma_{t}(\xi,|\eta|)}e^{\imath \eta (x-y)} \, ,
\end{equation}
\begin{equation}\label{cova-luxo-2}
\mathbb{E}\Big[\<Psi>^{\ka,n}_s(x)\<Psi>^{\ka',m}_t(y)\Big]=-c_H^2\int_{(\xi,\eta)\in D^{\ka,n} \cap D^{\ka',m}}\frac{d\xi}{|\xi|^{2H_0-1}}\frac{d\eta}{|\eta|^{2H_1-1}}\gamma_{s}(\xi,|\eta|)\gamma_{t}(-\xi,|\eta|)e^{\imath \eta (x-y)} \, ,
\end{equation}
where for all $\ka >0$ and $n\geq 1$, we set
$$D^{\ka,n}:=\{ (\xi,\eta)\in \R^2: \, |\xi|\leq 2^{2\ka n} , \, |\eta|\leq 2^{\ka n}\}\, .$$
\end{proposition}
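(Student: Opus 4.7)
The proposition is a direct consequence of the harmonizable representation \eqref{formula-heuris} combined with the standard isometries satisfied by stochastic integrals against the Fourier transform $\widehat{W}$ of a real space-time white noise. My plan proceeds as follows.

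First, I will introduce the deterministic complex kernel
$$\Phi^{\ka,n}_{s,x}(\xi,\eta):=-c_H\,\mathbf{1}_{D^{\ka,n}}(\xi,\eta)\,\frac{\xi}{|\xi|^{H_0+\frac{1}{2}}}\frac{\eta}{|\eta|^{H_1+\frac{1}{2}}}\,e^{\imath\eta x}\,\gamma_s(\xi,|\eta|)\, ,$$
so that \eqref{formula-heuris} reads $\<Psi>^{\ka,n}_s(x)=\int\Phi^{\ka,n}_{s,x}(\xi,\eta)\,\widehat{W}(d\xi,d\eta)$. This reduces the whole proposition to computing the two bilinear forms $\mathbb{E}[\widehat{W}(f)\overline{\widehat{W}(g)}]$ and $\mathbb{E}[\widehat{W}(f)\widehat{W}(g)]$ evaluated at this deterministic kernel.

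To obtain \eqref{cova-luxo}, I apply the standard $L^2$-isometry $\mathbb{E}[\widehat{W}(f)\overline{\widehat{W}(g)}]=\int f\bar g$ to $f=\Phi^{\ka,n}_{s,x}$ and $g=\Phi^{\ka',m}_{t,y}$. The two indicator functions combine into $\mathbf{1}_{D^{\ka,n}\cap D^{\ka',m}}$, the real constants $c_H$ square up, the phase factors give $e^{\imath\eta(x-y)}$, the time kernels yield $\gamma_s(\xi,|\eta|)\overline{\gamma_t(\xi,|\eta|)}$, and the algebraic factor $\xi^2\eta^2/(|\xi|^{2H_0+1}|\eta|^{2H_1+1})=|\xi|^{-(2H_0-1)}|\eta|^{-(2H_1-1)}$ produces the announced fractional weights, yielding exactly \eqref{cova-luxo}.

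For \eqref{cova-luxo-2}, I rely on the Hermitian-type identity $\mathbb{E}[\widehat{W}(f)\widehat{W}(g)]=\int f(\xi,\eta)\,g(-\xi,-\eta)\,d\xi\,d\eta$, which reflects the fact that $\widehat{W}$ is the Fourier transform of a real Gaussian measure. Under the substitution $(\xi,\eta)\mapsto(-\xi,-\eta)$ in $\Phi^{\ka',m}_{t,y}$, the cut-off $\mathbf{1}_{D^{\ka',m}}$ and the moduli $|\xi|$, $|\eta|$ are unchanged, the odd numerators $\xi$ and $\eta$ each pick up a sign, the plane wave $e^{\imath\eta y}$ becomes $e^{-\imath\eta y}$, and $\gamma_t(\xi,|\eta|)$ becomes $\gamma_t(-\xi,|\eta|)$; collecting all contributions reproduces \eqref{cova-luxo-2}.

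Since both steps only invoke standard isometries for the complex Gaussian integral, there is no conceptual obstacle. The one genuinely delicate point is the careful bookkeeping of signs arising from the products $(-\xi)(-\eta)=\xi\eta$ and from the interplay between the complex conjugation, the odd/even behaviour of the Fourier variables and the structure of $\gamma_t$; once these are handled, the simplification $\xi^2/|\xi|^{2H_0+1}=|\xi|^{1-2H_0}$ (and its $\eta$-counterpart) directly produces the fractional weights appearing in both stated formulas.
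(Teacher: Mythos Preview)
Your proposal is correct and matches the paper's approach exactly: the paper simply states that the proposition ``is an immediate consequence of \eqref{formula-heuris}'', and what you have written is precisely the detailed unpacking of that one-line claim via the standard isometry and Hermitian symmetry of $\widehat{W}$. One small caveat: if you carry out the sign bookkeeping you describe for \eqref{cova-luxo-2}, the two factors $(-c_H)$ and the product $(-\xi)(-\eta)=\xi\eta$ actually yield $+c_H^2$ rather than $-c_H^2$ (consistently with the fact that the process is real, so \eqref{cova-luxo} and \eqref{cova-luxo-2} must coincide, and $\overline{\gamma_t(\xi,|\eta|)}=\gamma_t(-\xi,|\eta|)$); this appears to be a harmless typo in the stated formula, which is never used elsewhere in the paper.
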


\smallskip

For all $0\leq s \leq t \leq 1$, $\xi \in \mathbb{R}$ and $r>0$, we set $\gamma_{s,t}(\xi,r):=\gamma_t(\xi,r)-\gamma_s(\xi,r)$. The following elementary bound on $\gamma_{s,t}$ will turn out to be the key estimate in the proof of Proposition \ref{sto}.

\smallskip

\begin{lemma}\label{lem:tec}
Fix $0<H<1$. Then for all $\eta\in \R$, $\varepsilon\in (0,H)$ and $0\leq s \leq t \leq 1$, one has
\begin{equation}\label{estim-int-ga-xi}
\int_{\R}d\xi\, \frac{|\ga_{s,t}(\xi,|\eta|)|^2}{|\xi|^{2H-1}}\lesssim \frac{|t-s|^{\varepsilon}}{1+|\eta|^{4(H-\varepsilon)}} \, .
\end{equation}
\end{lemma}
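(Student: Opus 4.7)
The plan is to exploit the closed-form expression for $\ga_t(\xi,r)$ obtained by integrating \eqref{defi-ga-t} in the time variable, namely
$$
\ga_t(\xi,r)=\frac{e^{\imath\xi t}-e^{-tr^2}}{r^2+\imath\xi},
$$
which after subtraction yields the decomposition
$$
\ga_{s,t}(\xi,r)=\frac{e^{\imath\xi t}-e^{\imath\xi s}}{r^2+\imath\xi}-\frac{e^{-tr^2}-e^{-sr^2}}{r^2+\imath\xi}.
$$
Using the elementary bound $|a-b|^2\leq 2|a|^2+2|b|^2$, I would split the integral in \eqref{estim-int-ga-xi} into $I_1+I_2$, where $I_1$ carries the oscillating $\xi$-dependent contribution and $I_2$ depends only on $r$.

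For $I_1$, the identity $|e^{\imath\xi t}-e^{\imath\xi s}|^2=4\sin^2(\xi(t-s)/2)$ combined with the rescaling $u=\xi(t-s)$ produces a factor $(t-s)^{2H}$ and reduces matters to the estimate
$$
J(a):=\int_\R \frac{\sin^2(u/2)}{(a^2+u^2)|u|^{2H-1}}\,du\lesssim \frac{1}{1+a^{2H}},\qquad a:=r^2(t-s).
$$
This bound is obtained by splitting the $u$-integration at $\max(1,a)$: on the small-$u$ piece one uses $\sin^2(u/2)\lesssim u^2$ to absorb the singularity at the origin, and on the large-$u$ piece one uses $\sin^2(u/2)\leq 1$. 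Convergence of both pieces requires precisely the hypothesis $H\in(0,1)$.

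For $I_2$, the $\xi$-dependence factors out and the scaling $\xi=r^2u$ gives $\int_\R d\xi/((r^4+\xi^2)|\xi|^{2H-1})=Cr^{-4H}$. Combined with the elementary pointwise bound $|e^{-tr^2}-e^{-sr^2}|^2\leq \min(r^4(t-s)^2,1)$, this produces $I_2\lesssim r^{-4H}\min(r^4(t-s)^2,1)$.

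It then remains to verify that both $I_1$ and $I_2$ are dominated by $(t-s)^\varepsilon/(1+r^{4(H-\varepsilon)})$, and this is where I expect the only real technical difficulty to lie. I would proceed by a short case analysis, distinguishing $r\leq 1$ from $r>1$ and, when $r>1$, further separating $r^2(t-s)\leq 1$ from $r^2(t-s)\geq 1$. Each sub-case reduces to a monomial inequality verified using $t-s\leq 1$ together with the strict constraint $\varepsilon<H$; for instance, in the regime $r^2(t-s)\geq 1$ (which already forces $r\geq 1$) one needs $(r^4(t-s))^\varepsilon\geq 1$, and this follows from $r^4(t-s)=r^2\cdot r^2(t-s)\geq 1$. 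The bookkeeping of exponents is elementary but is the only place where the upper bound $\varepsilon<H$ genuinely enters, and where some care is required to cover all sub-regimes uniformly.
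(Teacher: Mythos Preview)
Your proof is correct and proceeds by a genuinely different route from the paper's. The paper does not evaluate the integral defining $\ga_t$ in closed form; instead it writes
\[
\ga_{s,t}(\xi,|\eta|)=\big\{e^{\imath\xi t}-e^{\imath\xi s}\big\}\int_0^t dr\, e^{-r(|\eta|^2+\imath\xi)}+e^{\imath\xi s}\int_s^t dr\, e^{-r(|\eta|^2+\imath\xi)},
\]
and from this extracts a family of pointwise bounds of the form
\[
|\ga_{s,t}(\xi,|\eta|)|\lesssim \frac{|t-s|^{\varepsilon_1}|\xi|^{\varepsilon_1}}{||\eta|^2+\imath\xi|^{\la}}+\frac{|t-s|^{\varepsilon_2}}{||\eta|^2+\imath\xi|^{1-\varepsilon_2}},
\]
with free parameters $\varepsilon_1,\varepsilon_2,\la\in[0,1]$. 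It then integrates in $\xi$, choosing the parameters differently on $|\xi|\le 1$ and $|\xi|\ge 1$, and treating the cases $|\eta|\le 1$ and $|\eta|\ge 1$ separately.

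Your approach is more explicit: by passing to the closed form you isolate a single scaling integral $J(a)$ and read off the $a^{-2H}$ decay directly, which makes the dependence on $H$ and on $r^2(t-s)$ transparent. The paper's interpolation-style bound is slightly less sharp at the pointwise level but has the advantage of being a reusable estimate on $\ga_{s,t}$ itself (indeed, variants of it reappear later in the paper). One small imprecision in your sketch: when $a>1$, the ``small-$u$'' piece $|u|\le a$ cannot be handled with $\sin^2(u/2)\lesssim u^2$ alone---you need a further split at $|u|=1$ and use $\sin^2\le 1$ on $1\le|u|\le a$; otherwise you would pick up a spurious $a^{2-2H}$ factor. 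This is a bookkeeping detail and does not affect the validity of the argument.
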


\begin{proof}
By the definition \eqref{defi-ga-t} of $\ga_{t}(\xi,|\eta|)$, one has
$$\ga_{s,t}(\xi,|\eta|)=\big\{e^{\imath \xi t}- e^{\imath \xi s}\big\}\int_0^t dr \, e^{-r( |\eta|^2+\imath \xi)}+e^{\imath \xi s} \int_s^t dr \, e^{-r( |\eta|^2+\imath \xi)} \, ,$$
from which we immediately deduce, for all $\varepsilon_1,\varepsilon_2,\la\in [0,1]$,
\begin{equation}\label{estim-ga-basi}
\big|\ga_{s,t}(\xi,|\eta|)\big| \lesssim \frac{|t-s|^{\varepsilon_1}|\xi|^{\varepsilon_1}}{||\eta|^2+\imath \xi|^{\la}}+\frac{|t-s|^{\varepsilon_2}}{||\eta|^2+\imath \xi|^{1-\varepsilon_2}} \, .
\end{equation}
Based on this estimate, we have on the one hand, for any $\varepsilon\in (0,H)$,
\begin{equation}\label{estim-1-lem-ga}
\int_{\R}d\xi\, \frac{|\ga_{s,t}(\xi,|\eta|)|^2}{|\xi|^{2H-1}} \lesssim |t-s|^{2\varepsilon} \bigg[ \int_{|\xi|\leq 1} \frac{d\xi}{|\xi|^{2H-1}}+\int_{|\xi|\geq 1} \frac{d\xi}{|\xi|^{1+2(H-\varepsilon)}}\bigg] \lesssim |t-s|^{2\varepsilon} \, .
\end{equation}
On the other hand, thanks to \eqref{estim-ga-basi}, one has for all $|\eta|\geq 1$ and $\varepsilon\in (0,H/2)$,
\begin{align}
&\int_{\R}d\xi\, \frac{|\ga_{s,t}(\xi,|\eta|)|^2}{|\xi|^{2H-1}}\nonumber\\
&\lesssim |t-s|^{2\varepsilon} \bigg[ \frac{1}{|\eta|^{4-4\varepsilon}} \int_{|\xi|\leq 1} \frac{d\xi}{|\xi|^{2H-1}}+\frac{1}{|\eta|^{4(H-2\varepsilon)}} \int_{|\xi|\geq 1} \frac{d\xi}{|\xi|^{2H-1}|\xi|^{2(1-H+\varepsilon)}} \bigg] \lesssim \frac{|t-s|^{2\varepsilon}}{|\eta|^{4(H-2\varepsilon)}} \, .\label{estim-2-lem-ga}
\end{align}

\

Combining \eqref{estim-1-lem-ga} and \eqref{estim-2-lem-ga} clearly yields \eqref{estim-int-ga-xi}.
\end{proof}

The following - independent - technical lemma, borrowed from \cite[Lemma 2.6]{deya-schaeffer-thomann}, will also prove useful in the estimates of the next section.

\begin{lemma}\label{lem:handle-chi-correc}
Let $\rho:\R \to \R$ be a test function and fix $\si\in \R$. Then, for every $p\geq 1$ and for all $\eta_1,\ldots,\eta_p\in \R^d$, it holds that
$$\bigg| \int_{\R} dx \, \prod_{i=1}^p \int_{\R^2} \frac{d\la_i d\lati_i}{\{1+|\la_i|^2\}^{\frac{\si}{2}}\{1+|\lati_i|^2\}^{\frac{\si}{2}}} e^{\imath \langle x,\la_i-\lati_i\rangle} \widehat{\chi}(\la_i-\eta_i)\overline{\widehat{\chi}(\lati_i-\eta_i)}  \bigg| \lesssim \prod_{i=1}^p \frac{1}{\{1+|\eta_i|^2\}^\si} \, ,$$
where the proportional constant only depends on $\chi$ and $\si$. 
\end{lemma}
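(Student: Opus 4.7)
The plan is to recognize the integrand as a product of squared Fourier integrals, estimate each factor in $L^{2p}_x$, and combine the pieces via Hölder's inequality in $x$.

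First I would introduce the auxiliary function
$$u_i(x,\eta_i) := \int_{\R} \frac{\widehat{\chi}(\la-\eta_i)}{\{1+|\la|^2\}^{\si/2}}\, e^{\imath x \la}\, d\la,$$
so that, for each $i$, the inner double $(\la_i,\lati_i)$-integral factors as the positive quantity $u_i(x,\eta_i)\overline{u_i(x,\eta_i)} = |u_i(x,\eta_i)|^2$. The quantity to bound thus reduces to the scalar expression $\int_\R dx\, \prod_{i=1}^p |u_i(x,\eta_i)|^2$, in which the role of each $\eta_i$ has been neatly decoupled.

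The central step will then be to derive two matching bounds on $u_i$. For the $L^\infty$ bound $\|u_i(\cdot,\eta_i)\|_{L^\infty(\R)} \lesssim (1+|\eta_i|^2)^{-\si/2}$ I would apply a direct absolute-value estimate combined with Peetre's inequality
$$(1+|\la|^2)^{-\si/2} \lesssim (1+|\eta_i|^2)^{-\si/2}\, (1+|\la - \eta_i|^2)^{|\si|/2},$$
and for the companion $L^2_x$ bound $\|u_i(\cdot,\eta_i)\|_{L^2(\R)}^2 \lesssim (1+|\eta_i|^2)^{-\si}$ I would invoke Plancherel's identity followed by a second application of the same inequality. In both cases the control of the resulting $\widehat{\chi}$-weighted integrals relies only on the Schwartz regularity of $\widehat{\chi}$. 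A straightforward $L^\infty$-$L^2$ interpolation of the form $\|u_i\|_{L^{2p}}^{2p} \leq \|u_i\|_{L^\infty}^{2(p-1)}\|u_i\|_{L^2}^2$ then upgrades these estimates to $\|u_i(\cdot,\eta_i)\|_{L^{2p}(\R)}^2 \lesssim (1+|\eta_i|^2)^{-\si}$.

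Finally, applying Hölder's inequality on $\R$ with $p$ equal exponents to the functions $|u_i(\cdot,\eta_i)|^2 \in L^p(\R)$ produces
$$\int_\R dx\, \prod_{i=1}^p |u_i(x,\eta_i)|^2 \leq \prod_{i=1}^p \|u_i(\cdot,\eta_i)\|_{L^{2p}(\R)}^2 \lesssim \prod_{i=1}^p \frac{1}{\{1+|\eta_i|^2\}^{\si}},$$
which is the asserted bound. The only real technical obstacle will be formulating Peetre's inequality so that it handles both signs of $\si$ uniformly; once that is in place, all remaining steps are routine and the constants depend only on $\chi$ and $\si$ as required.
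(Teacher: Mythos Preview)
Your argument is correct. The factorisation of the inner double integral as $|u_i(x,\eta_i)|^2$ is valid, Peetre's inequality in the form you quote does cover both signs of $\si$ uniformly, and the $L^\infty$--$L^2$ interpolation combined with H\"older in $x$ closes the estimate exactly as you describe.

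As for comparison with the paper: the paper does not actually prove this lemma. It is stated as a technical result ``borrowed from \cite[Lemma 2.6]{deya-schaeffer-thomann}'' and used as a black box in the proof of Proposition~\ref{sto}. So your proposal supplies a self-contained proof where the paper offers none, and your approach (recognising the integrand as $\prod_i |u_i|^2$ and estimating each factor in $L^{2p}_x$ via Peetre plus Plancherel) is a natural and efficient one.
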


\smallskip

\begin{remark}
The above preliminary material, as well as the subsequent proof, can be seen as the \enquote{heat} counterpart of the analysis carried out in \cite[Section 2]{deya-wave} for the wave model, and in \cite[Section 2.1]{deya-schaeffer-thomann} for the Schr{\"o}dinger case.
\end{remark}

\smallskip

\subsection{Proof of Proposition~\ref{sto}}\label{subsec:proof-sto-1}
Following the statement of the proposition, we fix $\al>\al_0$, where $\al_0$ is the quantity defined by \eqref{assump-gene-al}.

\

\noindent
\textit{Step 1: A moment estimate.} We show that for all $p\geq 1$, $1\leq n \leq m$, $0<\ka \leq \ka'$, $0\leq s \leq t \leq 1$ and $\varsigma>0$ satisfying \eqref{cond-varsigma}, we can find $\varepsilon>0$ small enough such that
\begin{equation}\label{bou-psi-step-1}
 \int_{\mathbb{R}}dx\, \mathbb{E}\bigg[\Big|\mathcal{F}^{-1}\Big(\{1+|.|^2\}^{-\frac{\alpha}{2}}\mathcal{F}\big(\rho\cdot \big[\<Psi>^{\ka',m}_{s,t}-\<Psi>^{\ka,n}_{s,t}\big]\big)\Big)(x)\Big|^{2p}\bigg] \lesssim 2^{-2n\ka\varsigma p}|t-s|^{2\varepsilon p}\, ,
\end{equation}
where the proportional constant only depends on $p, \alpha$ and $\rho$.

\smallskip

One can first notice that the random variable under consideration is clearly Gaussian, and so, for every $p\geq 1$, one has
\begin{align}
&\mathbb{E}\bigg[\Big|\mathcal{F}^{-1}\Big(\{1+|.|^2\}^{-\frac{\alpha}{2}}\mathcal{F}\big(\rho\cdot \big[\<Psi>^{\ka',m}_{s,t}-\<Psi>^{\ka,n}_{s,t}\big]\big)\Big)(x)\Big|^{2p}\bigg]\leq c_p\,  \mathbb{E}\bigg[\Big|\mathcal{F}^{-1}\Big(\{1+|.|^2\}^{-\frac{\alpha}{2}}\mathcal{F}\big(\rho\cdot \big[\<Psi>^{\ka',m}_{s,t}-\<Psi>^{\ka,n}_{s,t}\big]\big)\Big)(x)\Big|^{2}\bigg]^p\, ,\label{appl-hyper}
\end{align}
where the constant $c_p$ only depends on $p$. Let us then write 
\begin{align*}
&\mathcal{F}^{-1}\Big(\{1+|.|^2\}^{-\frac{\alpha}{2}}\mathcal{F}\big(\rho\cdot \big[\<Psi>^{\ka',m}_{s,t}-\<Psi>^{\ka,n}_{s,t}\big]\big)\Big)(x)\\
&=\int_{\mathbb{R}}d\lambda \, e^{\imath  x \lambda } \{1+|\lambda|^2\}^{-\frac{\alpha}{2}}\mathcal{F}\big(\rho\cdot \big[\<Psi>^{\ka',m}_{s,t}-\<Psi>^{\ka,n}_{s,t}\big]\big)(\lambda)\\
&=\int_{\mathbb{R}}d\la \, \{1+|\lambda|^2\}^{-\frac{\alpha}{2}}e^{\imath x \lambda }\left(\int_{\mathbb{R}}d\be\, \widehat{\rho}(\la-\be) \cf\big(\big[\<Psi>^{\ka',m}_{s,t}-\<Psi>^{\ka,n}_{s,t}\big]\big)(\be)\right)\, ,
%&=\int_{\mathbb{R}^d}\int_{\mathbb{R}^d}e^{i \langle x, \lambda \rangle}\{1+|\lambda|^2\}^{-\frac{\alpha}{2}}e^{-i \langle y, \lambda \rangle}\chi(y) [\<Psi>_{n,m}(t,y)-\<Psi>_{n,m}(s,y)]\, dyd\lambda \, .
\end{align*}
and hence
\begin{align}
&\mathbb{E}\bigg[\Big|\mathcal{F}^{-1}\Big(\{1+|.|^2\}^{-\frac{\alpha}{2}}\mathcal{F}\big(\rho\cdot \big[\<Psi>^{\ka',m}_{s,t}-\<Psi>^{\ka,n}_{s,t}\big]\big)\Big)(x)\Big|^{2}\bigg]\nonumber\\
&=\frac{1}{(2\pi)^{2}}\int_{\R^2} \frac{d\la d\lati}{\{1+|\la|^2\}^{\frac{\al}{2}} \{1+|\lati|^2\}^{\frac{\al}{2}}}e^{\imath  x(\la-\lati)} \int_{\R^2} d\be d\betati \, \widehat{\rho}(\la-\be) \overline{\widehat{\rho}(\lati-\betati)} \mathcal{Q}^{\ka,\ka'}_{n,m;s,t}(\be,\betati) \, ,\label{bound-correc-q}
\end{align}
where we have set
$$\mathcal{Q}^{\ka,\ka'}_{n,m;s,t}(\be,\betati):=\mathbb{E}\Big[\cf\big(\big[\<Psi>^{\ka',m}_{s,t}-\<Psi>^{\ka,n}_{s,t}\big]\big)(\be) \overline{\cf\big(\big[\<Psi>^{\ka',m}_{s,t}-\<Psi>^{\ka,n}_{s,t}\big]\big)(\betati)}\Big] \, .$$
Based on the covariance formula \eqref{cova-luxo}, one has now 
\begin{align*}
&\mathbb{E}\Big[\big[\<Psi>^{\ka',m}_{s,t}(y)-\<Psi>^{\ka,n}_{s,t}(y)\big]\overline{\big[\<Psi>^{\ka',m}_{s,t}(\tilde{y})-\<Psi>^{\ka,n}_{s,t}(\tilde{y})\big]}\Big]\nonumber\\
&= c_H^2\int_{(\xi,\eta)\in D^{\ka',m} \backslash D^{\ka,n}}\frac{d\xi}{|\xi|^{2H_0-1}}\frac{d\eta}{|\eta|^{2H_1-1}}|\gamma_{s,t}(\xi,|\eta|)|^2e^{\imath \eta y }e^{-\imath \eta \tilde{y}} \, ,
\end{align*}
which allows us to recast the above quantity into
\begin{equation}\label{expr-q}
\mathcal{Q}^{\ka,\ka'}_{n,m;s,t}(\beta,\betati)=c_H^2\,  \int_{(\xi,\eta)\in D^{\ka',m} \backslash D^{\ka,n}}\frac{d\xi}{|\xi|^{2H_0-1}}\frac{d\eta}{|\eta|^{2H_1-1}}|\gamma_{s,t}(\xi,|\eta|)|^2 \delta_{\beta=\eta} \delta_{\betati=\eta} \, .
\end{equation}
Thanks to \eqref{appl-hyper}, \eqref{bound-correc-q} and \eqref{expr-q}, we get that
\begin{align*}
& \int_{\mathbb{R}}\mathbb{E}\bigg[\Big|\mathcal{F}^{-1}\Big(\{1+|.|^2\}^{-\frac{\alpha}{2}}\mathcal{F}\big(\rho\cdot \big[\<Psi>^{\ka',m}_{s,t}-\<Psi>^{\ka,n}_{s,t}\big]\big)\Big)(x)\Big|^{2p}\bigg]\, dx\\
&\lesssim \int_{\R} dx \, \bigg(\int_{(\xi,\eta)\in D^{\ka',m} \backslash D^{\ka,n}}\frac{d\xi}{|\xi|^{2H_0-1}}\frac{d\eta}{|\eta|^{2H_1-1}}|\gamma_{s,t}(\xi,|\eta|)|^2\\
&\hspace{3cm} \int_{\R^2} \frac{d\la d\lati}{\{1+|\la|^2\}^{\frac{\al}{2}} \{1+|\lati|^2\}^{\frac{\al}{2}}}e^{\imath  x(\la-\lati)}  \, \widehat{\rho}(\la-\eta) \overline{\widehat{\rho}(\lati-\eta)} \bigg)^p\\
&\lesssim  \left(\int_{(\xi,\eta)\in D^{\ka',m} \backslash D^{\ka,n}}\frac{d\xi}{|\xi|^{2H_0-1}}\frac{d\eta}{|\eta|^{2H_1-1}}\{1+|\eta|^2\}^{-\alpha}|\gamma_{s,t}(\xi,|\eta|)|^2\, d\xi d\eta\right)^p \, ,
\end{align*}
where the last inequality is a consequence of Lemma \ref{lem:handle-chi-correc}.

\smallskip

Observe now that $\1_{D^{\ka',m} \backslash D^{\ka,n}}\leq \1_{\{(\xi,\eta)\in \R^2: \, |\xi|\geq 2^{2\ka n}\}}+\1_{\{(\xi,\eta)\in \R^2: \, |\eta|\geq 2^{\ka n}\}}$, and thus
\begin{align}
& \left(\int_{(\xi,\eta)\in D^{\ka',m} \backslash D^{\ka,n}}\frac{d\xi}{|\xi|^{2H_0-1}}\frac{d\eta}{|\eta|^{2H_1-1}}\{1+|\eta|^2\}^{-\alpha}|\gamma_{s,t}(\xi,|\eta|)|^2\, d\xi d\eta\right)^p  \nonumber\\
&\lesssim  \left(\int_{|\xi|\geq 2^{2\ka n}}\frac{d\xi}{|\xi|^{2H_0-1}}\int_{\R}\frac{d\eta}{|\eta|^{2H_1-1}}\{1+|\eta|^2\}^{-\alpha}|\gamma_{s,t}(\xi,|\eta|)|^2\, d\xi d\eta\right)^p  \nonumber\\
&\hspace{1cm}+ \left(\int_{\R}\frac{d\xi}{|\xi|^{2H_0-1}}\int_{|\eta|\geq 2^{\ka n}}\frac{d\eta}{|\eta|^{2H_1-1}}\{1+|\eta|^2\}^{-\alpha}|\gamma_{s,t}(\xi,|\eta|)|^2\, d\xi d\eta\right)^p  \nonumber\\
&=:(\mathbb{I}^{\ka,n}(s,t))^p+(\mathbb{II}^{\ka,n}(s,t))^p \, . \label{decompos-i-ii}
\end{align}
Let us estimate the quantity $\mathbb{I}^{\ka,n}(s,t)$ first. To do so, pick $\varsigma >0$ satisfying \eqref{cond-varsigma}, which yields
\begin{align*}
\mathbb{I}^{\ka,n}(s,t)&\leq 2^{-2n\ka\varsigma}  \int_{\mathbb{R}^2}\, \frac{d\xi}{|\xi|^{2(H_0-\frac{\varsigma}{2})-1}}\frac{d\eta}{|\eta|^{2H_1-1}}\{1+|\eta|^2\}^{-\alpha}|\gamma_{s,t}(\xi,|\eta|)|^2\\
&\lesssim 2^{-2n\ka\varsigma} \int_{0}^{\infty} \frac{dr}{r^{2H_1-1}\{1+r^2\}^{\alpha}}\left(\int_{\mathbb{R}}d\xi\, \frac{|\gamma_{s,t}(\xi,r)|^2}{|\xi|^{2(H_0-\frac{\varsigma}{2})-1}}\right) \, .
\end{align*}
We are here in a position to apply Lemma \ref{lem:tec} with $H:=H_0-\frac{\varsigma}{2}$, which entails that for all $0<\varepsilon< H_0-\frac{\varsigma}{2}$,
\begin{align}
\mathbb{I}^{\ka,n}(s,t)&\lesssim 2^{-2n\ka\varsigma} |t-s|^{\varepsilon}\int_{0}^{\infty}\frac{dr}{r^{2H_1-1}\{1+r^2\}^{\alpha}}\frac{1}{1+r^{4(H_0-\frac{\varsigma}{2}-\varepsilon)}}\nonumber\\
&\lesssim 2^{-2n\ka\varsigma}|t-s|^{\varepsilon}\left(\int_0^1 \frac{dr}{r^{2H_1-1}} + \int_{1}^{\infty}\frac{1}{r^{2(\alpha-\al_0-\varsigma)+1 -4\varepsilon}}dr \right)\, .\label{boun-i-pr}
\end{align}
Owing to assumption \eqref{cond-varsigma}, we can pick $\varepsilon>0$ small enough such that $\alpha-\al_0-\varsigma>2\varepsilon$, and for this choice, it is readily checked that the integrals into brackets in \eqref{boun-i-pr} are both finite. We have thus established that
$$\mathbb{I}^{\ka,n}(s,t)\lesssim 2^{-2n\ka\varsigma}|t-s|^{2\varepsilon}\, .$$

\smallskip

The estimation of $\mathbb{II}^{\ka,n}(s,t)$ can then be done along similar arguments. Namely, one has, for all $0<\varepsilon< H_0$,
\begin{align*}
\mathbb{II}^{\ka,n}(s,t)&\leq 2^{-2n\ka\varsigma}  \int_{\mathbb{R}^2}\, \frac{d\xi}{|\xi|^{2H_0-1}}\frac{d\eta}{|\eta|^{2(H_1-\varsigma)-1}}\{1+|\eta|^2\}^{-\alpha}|\gamma_{s,t}(\xi,|\eta|)|^2\\
&\lesssim 2^{-2n\ka\varsigma} \int_{0}^{\infty} \frac{dr}{r^{2(H_1-\varsigma)-1}\{1+r^2\}^{\alpha}}\left(\int_{\mathbb{R}}d\xi\, \frac{|\gamma_{s,t}(\xi,r)|^2}{|\xi|^{2H_0-1}}\right)\\
&\lesssim 2^{-2n\ka\varsigma} |t-s|^{\varepsilon}\int_{0}^{\infty}\frac{dr}{r^{2(H_1-\varsigma)-1}\{1+r^2\}^{\alpha}}\frac{1}{1+r^{4(H_0-\varepsilon)}}\\
&\lesssim 2^{-2n\ka\varsigma}|t-s|^{\varepsilon}\left(\int_0^1 \frac{dr}{r^{2(H_1-\varsigma)-1}} + \int_{1}^{\infty}\frac{1}{r^{2(\alpha-\al_0-\varsigma)+1 -4\varepsilon}}dr \right)\, .
\end{align*}
Again, we can pick $\varepsilon>0$ so that $\alpha-\al_0-\varsigma>2\varepsilon$, which leads us to
$$\mathbb{II}^{\ka,n}(s,t)\lesssim 2^{-2n\ka\varsigma}|t-s|^{\varepsilon}\, .$$

\smallskip

Going back to~\eqref{decompos-i-ii}, we obtain the expected estimate \eqref{bou-psi-step-1}.

\

\noindent
\textit{Step 2: Conclusion.}
Estimate \eqref{bou-psi-step-1} can be equivalently formulated as
\begin{equation}\label{recap-step-1}
\mathbb{E}\Big[\big\|\rho\cdot \big[\<Psi>^{\ka',m}_{s,t}-\<Psi>^{\ka,n}_{s,t}\big]\big\|_{\mathcal{W}^{-\alpha,2p}}^{2p}\Big]\lesssim 2^{-2n\ka\varsigma p}|t-s|^{2\varepsilon p}\, ,
\end{equation}
for all $p\geq 1$, $1\leq n \leq m$, $0<\ka \leq \ka'$, $0\leq s \leq t \leq 1$, $\varsigma>0$ satisfying \eqref{cond-varsigma}, and $\varepsilon>0$ small enough. In particular, it holds that
\begin{equation}\label{cauchy-seq}
\mathbb{E}\Big[\big\|\rho\cdot \big[\<Psi>^{\ka,m}_{s,t}-\<Psi>^{\ka,n}_{s,t}\big]\big\|_{\mathcal{W}^{-\alpha,2p}}^{2p}\Big]\lesssim 2^{-2n\ka\varsigma p}|t-s|^{2\varepsilon p}\, ,
\end{equation}
and
\begin{equation}\label{identif-lim}
\mathbb{E}\Big[\big\|\rho\cdot \big[\<Psi>^{\ka',n}_{s,t}-\<Psi>^{\ka,n}_{s,t}\big]\big\|_{\mathcal{W}^{-\alpha,2p}}^{2p}\Big]\lesssim 2^{-2n\ka\varsigma p}|t-s|^{2\varepsilon p}\, .
\end{equation}

By picking $p\geq 1$ large enough in \eqref{cauchy-seq}, we get, by Kolmogorov criterion, that $\rho\cdot \big[\<Psi>^{\ka,m}-\<Psi>^{\ka,n}\big]\in \mathcal{C}([0,1]; \mathcal{W}^{-\alpha,2p}(\mathbb{R}^d)) $ almost surely. We can then apply the classical Garsia-Rodemich-Rumsey estimate and assert that a.s, for all $p\geq 1$, $\varepsilon'>0$, $0\leq t \leq 1$, one has
$$ \big\|\rho\cdot \big[\<Psi>^{\ka,m}_{t}-\<Psi>^{\ka,n}_{t}\big]\big\|_{\mathcal{W}^{-\alpha,2p}}^{2p} \lesssim \int_{[0,1]^2} \frac{\big\|\rho\cdot \big[\<Psi>^{\ka,m}_{u,v}-\<Psi>^{\ka,n}_{u,v}\big]\big\|_{\mathcal{W}^{-\alpha,2p}}^{2p}}{|u-v|^{2\varepsilon' p+2}}\, dudv\, ,$$
for some (deterministic) proportional constant that only depends on $\varepsilon'$ and $p$.

\smallskip

As a consequence, using~\eqref{recap-step-1} again, we obtain that for any $0<\varepsilon'<\varepsilon$,
\begin{eqnarray*}
\mathbb{E}\Big[\big\|\rho\cdot \big[\<Psi>^{\ka,m}-\<Psi>^{\ka,n}\big]\big\|_{\cac([0,1];\mathcal{W}^{-\alpha,2p})}^{2p}\Big]
&\lesssim& 2^{-2n\ka\varsigma p}\int_{[0,1]^2}\frac{dudv}{|u-v|^{-2(\varepsilon-\varepsilon') p+2}}\ \lesssim \ 2^{-2n\ka\varsigma p}\, ,
\end{eqnarray*}
for any $p\geq p_0$, where $p_0\geq 1$ is such that $-2(\varepsilon-\varepsilon') p_0+2<1$.

\smallskip

Note that for $1\leq p\leq p_0$, one has, since $\rho$ is compactly-supported,
$$\mathbb{E}\Big[\big\|\rho\cdot \big[\<Psi>^{\ka,m}-\<Psi>^{\ka,n}\big]\big\|_{\cac([0,1];\mathcal{W}^{-\alpha,2p})}^{2p}\Big]^{\frac{1}{2p}}\lesssim \mathbb{E}\Big[\big\|\rho\cdot \big[\<Psi>^{\ka,m}-\<Psi>^{\ka,n}\big]\big\|_{\cac([0,1];\mathcal{W}^{-\alpha,2p_0})}^{2p_0}\Big]^{\frac{1}{2p_0}} \, ,$$
and so we can conclude that for any $p\geq 1$,
\begin{equation}\label{bou-in-l-p}
\big\|\rho\cdot \big[\<Psi>^{\ka,m}-\<Psi>^{\ka,n}\big]\big\|_{ L^{2p}(\Omega; \mathcal{C}([0,1];\mathcal{W}^{-\alpha,2p}))} \lesssim 2^{-n\ka\varsigma }\, .
\end{equation}
In particular, $(\rho\cdot \<Psi>^{\ka,n})_{n\geq 1}$ is a Cauchy sequence in $L^{2p}(\Omega; \mathcal{C}([0,1]; \mathcal{W}^{-\alpha,2p}(\mathbb{R}^d)))$, and thus it converges in this space. Let us (temporarily) denote by $\rho\cdot\<Psi>^\ka$ the limit of this sequence, for fixed $\ka>0$.

\smallskip

The fact that $\rho\cdot\<Psi>^\ka$ actually does not depend on $\ka$ can be readily deduced from \eqref{identif-lim}. Indeed, if $0<\ka\leq \ka'$, one has, for all $t\in [0,1]$ and $n\geq 1$,
\begin{align*}
&\mathbb{E}\Big[\big\|\rho\cdot \<Psi>^{\ka'}_{t}-\rho\cdot\<Psi>^{\ka}_{t}\big\|_{\mathcal{W}^{-\alpha,2p}}^{2p}\Big]\\
&\lesssim \mathbb{E}\Big[\big\|\rho\cdot\<Psi>^{\ka'}_{t}-\rho\cdot\<Psi>^{\ka',n}_{t}\big\|_{\mathcal{W}^{-\alpha,2p}}^{2p}\Big]+\mathbb{E}\Big[\big\|\rho\cdot\<Psi>^{\ka',n}_{t}-\rho\cdot\<Psi>^{\ka,n}_{t}\big\|_{\mathcal{W}^{-\alpha,2p}}^{2p}\Big]+\mathbb{E}\Big[\big\|\rho\cdot\<Psi>^{\ka,n}_{t}-\rho\cdot\<Psi>^{\ka}_{t}\big\|_{\mathcal{W}^{-\alpha,2p}}^{2p}\Big]\, ,
\end{align*}
and it is clear that those three quantities tend to $0$ as $n$ tends to $\infty$. Therefore, we can henceforth write $\rho\cdot\<Psi>$ instead of $\rho\cdot\<Psi>^\ka$.

\smallskip

The bound \eqref{speed-converg-def} can finally be derived from an application of the Borel-Cantelli lemma. Indeed, by letting $m$ tend to infinity in \eqref{bou-in-l-p} (for fixed $n\geq 1$ and $p=2$), we get that
$$\mathbb{E}\Big[\big\|\rho\cdot\<Psi>^{\ka,n}-\rho\cdot\<Psi>\big\|_{\mathcal{C}([0,1];\ch^{-\alpha})}^2\Big] \lesssim 2^{-2n\ka\varsigma }\, ,$$
and accordingly, for all $0<\tilde{\varsigma} < \varsigma$ and $n\geq 1$,
$$\mathbb{P}\Big(\big\|\rho\cdot\<Psi>^{\ka,n}-\rho\cdot\<Psi>\big\|_{\mathcal{C}([0,1];\ch^{-\alpha})} > 2^{-n\ka\tilde{\varsigma}}\Big)\lesssim 2^{-2n\ka(\varsigma-\tilde{\varsigma}) }\, .$$

\

\section{Noise discretization}\label{sec:noise-discretization}

Let us now initiate our discretization procedure, starting with the treatment of the noise. To be more specific, and as we announced it in the introduction, we are here interested in the discretization of the smoothened version $\partial_t\partial_x B^{\ka,n}$ of $\dot{B}$, at the basis of our interpretation of $\<Psi>$ (along definition \eqref{def:psi-ka-n} and Proposition \ref{sto}).

\smallskip

For this section (and this section only), we will rely on the time-space grid introduced in \eqref{grid-intro}, that is we set
\begin{equation*}
t_i=t_i^n:=\frac{i}{2^n} \ \  (i=0,\ldots,2^n-1) , \quad x_j=x_j^n:=\frac{j}{2^n} \ \  (j=-2^{2n},\ldots,2^{2n}-1) \ .
\end{equation*}
Note in particular that the set of points $(x^n_j)_{n\geq 1, \text{-}2^{2n}\leq j\leq 2^{2n}\text{-}1}$ is dense in $\R$. With this grid in hand, we now  define the discretized noise $\partial_t \partial_x \widetilde{B}^{\ka,n}$ as 
\begin{equation}\label{interpol-bipara-bis}
(\partial_t \partial_x \widetilde{B}^{\ka,n}) (t,x):=
\begin{cases}
2^{2n} \sq^n_{i,j}\!\! B^{\ka,n}\quad \text{if}\ t\in [t_i,t_{i+1}) \ \text{and} \ x\in [x_j,x_{j+1}),\\
0 \ \text{otherwise},
\end{cases}
\end{equation}
where we recall the notation $\sq^n_{i,j} b:=b_{t_{i+1},x_{j+1}}-b_{t_{i+1},x_{j}}-b_{t_{i},x_{j+1}}+b_{t_{i},x_{j}}$ for any two-parameter path $b$.

\smallskip

Observe that the so-defined approximation $\partial_t \partial_x \widetilde{B}^{\ka,n}$ indeed corresponds to the space-time derivative of some (piecewise) smooth approximation $\widetilde{B}^{\ka,n}$ of $B$: consider for instance the continuous sheet given for all $t\in [t_i,t_{i+1})$ and $x\in [x_j,x_{j+1})$ by
\begin{align}
&\widetilde{B}^{\ka,n}_{t,x}:=B^{\ka,n}_{t_i,x_j}\nonumber\\
&\hspace{0.5cm}+2^n(t-t_i)(B^{\ka,n}_{t_{i+1},x_j}-B^{\ka,n}_{t_i,x_j})+2^n(x-x_j) (B^{\ka,n}_{t_{i},x_{j+1}}-B^{\ka,n}_{t_i,x_j})+2^{2n}(t-t_i)(x-x_j) \sq^n_{i,j}\!\! B^{\ka,n}\, .\label{discreti-noi}
\end{align}

\

The corresponding approximation of $\<Psi>$ can then be written, for every $(t,x)\in [0,1]\times \R$, as
\begin{equation}\label{expression-luxo-n-bis}
\widetilde{\<Psi>}^{\ka,n}_{t}(x):=(G\ast \partial_t \partial_x \widetilde{B}^{\ka,n})_t(x)=2^{2n}\sum_{i=0}^{2^n-1} \sum_{j=-2^{2n}}^{2^{2n}-1}G^n_{i,j}(t,x)\,  \sq^n_{i,j}\!\! B^{\ka,n} \, ,
\end{equation}
where we have set
$$G^n_{i,j}(t,x):=\bigg(\int_{t_{i}}^{t_{i+1}}ds \int_{x_j}^{x_{j+1}} dy\,  G_{t-s}(x-y) \bigg) \, .$$

\

Our main control regarding the above noise-discretization procedure can now be stated as follows (we recall that the notation $\<Psi>^{\ka,n}$ refers to the solution associated with $\partial_t\partial_x B^{\ka,n}$, i.e. to the process considered in Proposition \ref{sto}).

\begin{proposition}\label{prop:discreti-noise}
Fix $(H_0,H_1)\in (0,1)^2$ such that $2H_0+H_1<1$, and let $\al_0>0$ be defined as in \eqref{assump-gene-al}. 

\smallskip

Then, for all $\al>\al_0$, $0<\ka\leq \frac{\al_0}{5}$, $0<\varsigma\leq \min(1-5\ka,\al_0-4\ka)$, $p\geq 1$ and for every test function $\rho:\mathbb{R}\rightarrow \mathbb{R}$ (i.e., smooth and compactly-supported), one has almost surely
%\color{red}
%\begin{equation}\label{speed-converg-1}
%\mathbb{E} \Big[ \big\| \rho\cdot \big\{\<Psi>^{\ka,n}-\widetilde{\<Psi>}^{\ka,n}\big\}\big\|_{\mathcal{C}([0,1]; \mathcal{W}^{-\al,2p}(\mathbb{R}))}^{2p}\Big]^{\frac{1}{2p}} \lesssim 2^{-n\varsigma},
%\end{equation}
\begin{equation}\label{speed-converg-1}
\sup_{t\in [0,1]}\big\| \rho \cdot\big\{\<Psi>^{\ka,n}_t-\widetilde{\<Psi>}^{\ka,n}_t\big\}\big\|_{\ch^{-\al}(\mathbb{R})} \lesssim 2^{-\varsigma n } ,
\end{equation}
\color{black}
where the proportional constant does not depend on $n$.
\end{proposition}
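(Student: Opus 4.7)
\emph{Strategy.} The plan is to mirror the three-step scheme used in the proof of Proposition~\ref{sto}: establish a second-moment estimate for $\|\rho\cdot[\<Psi>^{\ka,n}_{s,t}-\widetilde{\<Psi>}^{\ka,n}_{s,t}]\|_{\ch^{-\al}}$, upgrade it to $L^{2p}$ via Gaussian hypercontractivity (as in \eqref{appl-hyper}), turn it into a time-continuity bound via a Garsia--Rodemich--Rumsey argument, and conclude with Borel--Cantelli. The essentially new ingredient will be the analysis of the \emph{discretization} kernel in the Fourier picture.

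\emph{Fourier representation of the error.} Starting from the harmonizable formula \eqref{b-bar-n}, I would first write the rectangular increment as
$$\sq^n_{i,j} B^{\ka,n} = c_H \int_{D^{\ka,n}} \widehat{W}(d\xi,d\eta) \, \frac{(e^{\imath \xi t_{i+1}} - e^{\imath \xi t_i})(e^{\imath \eta x_{j+1}} - e^{\imath \eta x_j})}{|\xi|^{H_0+\frac12}|\eta|^{H_1+\frac12}}\,,$$
plug this into \eqref{expression-luxo-n-bis}, and compare with the representation \eqref{formula-heuris} of $\<Psi>^{\ka,n}_t(x)$. The difference then takes the form
$$\<Psi>^{\ka,n}_t(x)-\widetilde{\<Psi>}^{\ka,n}_t(x) = -c_H \int_{D^{\ka,n}} \widehat{W}(d\xi,d\eta) \, \frac{M^n_t(\xi,\eta,x)}{|\xi|^{H_0+\frac12}|\eta|^{H_1+\frac12}}\,,$$
where $M^n_t$ measures the mismatch between $\xi\eta\, e^{\imath \eta x}\ga_t(\xi,|\eta|)$ and its piecewise-rectangular counterpart (convolved with $G$). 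Using $|e^{\imath \xi \tau}-e^{\imath \xi s}|\lesssim \min(1,|\xi||\tau-s|)$ in both variables together with the explicit bound \eqref{estim-ga-basi} on $\ga_t$, I expect a pointwise estimate of the form
$$|M^n_t(\xi,\eta,x)|^2 \lesssim 2^{-2n(a+b)}|\xi|^{2a}|\eta|^{2b}\,|\ga_t(\xi,|\eta|)|^2 \qquad \text{for any admissible } a,b\in[0,1]\,,$$
modulo lower-order contributions that decay at least as fast.

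\emph{Integration and balance.} Arguing as in Step 1 of the proof of Proposition~\ref{sto}, the second moment of $\|\rho\cdot[\<Psi>^{\ka,n}_{s,t}-\widetilde{\<Psi>}^{\ka,n}_{s,t}]\|_{\ch^{-\al}}$ reduces, via Lemma~\ref{lem:handle-chi-correc}, to an integral of $|M^n_{s,t}|^2$ weighted by $|\xi|^{1-2H_0}|\eta|^{1-2H_1}\{1+|\eta|^2\}^{-\al}$ over $D^{\ka,n}$. Inserting the previous kernel bound and applying Lemma~\ref{lem:tec} to the $d\xi$-integral, exactly as in \eqref{estim-1-lem-ga}--\eqref{estim-2-lem-ga}, the residual radial $d\eta$-integral converges thanks to $\al>\al_0$. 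The $2^{-2n(a+b)}$ factor combines with the cut-offs $|\xi|\leq 2^{2\ka n}$ and $|\eta|\leq 2^{\ka n}$ to yield the global rate $2^{-2\varsigma n}|t-s|^{2\varepsilon}$. The restriction $0<\varsigma \leq \min(1-5\ka, \al_0-4\ka)$ should emerge as the optimal joint tuning of $(a,b)$ against the weight $|\xi|^{1-2H_0}|\eta|^{1-2H_1}\{1+|\eta|^2\}^{-\al}$: the two summands correspond respectively to the branch in which the extra $|\eta|^{2b}$ (space-discretization gain) is paid against the cut-off $2^{\ka n}$, and to the branch in which $|\xi|^{2a}$ (time-discretization gain) is paid against $2^{2\ka n}$. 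Gaussian hypercontractivity, the Garsia--Rodemich--Rumsey estimate, and Borel--Cantelli then conclude exactly as at the end of Section~\ref{subsec:proof-sto-1}.

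\emph{Main obstacle.} The delicate point is the pointwise bound on $M^n_t$. The discrete approximation of $\partial_s\partial_y (e^{\imath \xi s}e^{\imath \eta y})$ must be compared \emph{after} convolution with the singular heat kernel $G$, and the interpolation argument has to be sharp enough to preserve the smoothing effect encoded by $\ga_t$; a crude bound would produce no quantitative rate. Tracking precisely how the discretization error interacts with $\ga_t$, and then identifying the pair $(a,b)$ that optimises the subsequent integration, is what dictates the range of admissible $(\ka,\varsigma)$ in the statement.
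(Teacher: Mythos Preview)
Your overall architecture (moment estimate, hypercontractivity, Garsia--Rodemich--Rumsey, Borel--Cantelli) matches the paper exactly; the gap is in the second-moment step, specifically your appeal to Lemma~\ref{lem:handle-chi-correc}. That lemma converts the Sobolev weight $\{1+|\la|^2\}^{-\al}$ into $\{1+|\eta|^2\}^{-\al}$ only when the spatial kernel has the pure-oscillation form $e^{\imath\eta x}\cdot(\text{function independent of }x)$, so that multiplication by $\rho$ and Fourier transform in $x$ produce $\widehat{\rho}(\la-\eta)$. Here the discretized contribution to your $M^n_t(\xi,\eta,x)$ carries its $x$-dependence through $G^n_{i,j}(t,x)=\int_{t_i}^{t_{i+1}}\!\int_{x_j}^{x_{j+1}}G_{t-s}(x-y)\,ds\,dy$, which is not a pure oscillation. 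Hence the reduction ``to an integral of $|M^n_{s,t}|^2$ weighted by $\{1+|\eta|^2\}^{-\al}$'' does not follow, and a pointwise-in-$x$ estimate on $M^n_t$ leaves the $\la$-integral (the one actually carrying the Sobolev weight) uncontrolled.

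The paper instead works directly with $\cj^{\frx,n}_{s,t}(\xi,\eta,\la):=\int\rho(y)e^{-\imath\la y}\frx^n_{s,t}(y;\xi,\eta)\,dy$ and bounds $\int\{1+|\eta+\la|^2\}^{-q\al/2}|\cj^{\frx,n}_{s,t}|^q\,d\la$ for $q=1,2$ (Proposition~\ref{prop:estim-i-frx}). The underlying estimate (Lemma~\ref{lem:estim-i-frx}) splits $\cj^{\frx,n}$ into three pieces and uses integration by parts in the spatial variable to manufacture the $\frac{1}{1+|\la|}$ decay needed for $\la$-integrability---this is precisely the obstacle you flag at the end, and it is not bypassed by the bound you propose. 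The outcome is $|t-s|^{\varepsilon}\big[2^{-n}|\xi|+2^{-n}+2^{-n\al_0}|\eta|\big]$ rather than your multiplicative $2^{-n(a+b)}|\xi|^a|\eta|^b|\ga_t|$: the time- and space-discretization errors enter additively, there is no $|\ga_t|$ factor (the $|t-s|^{\varepsilon}$ comes from heat-kernel increments, not from Lemma~\ref{lem:tec}), and the specific exponent $\al_0$ on the space term is forced by the $\la$-integrability requirement. The constraints $\varsigma\leq\min(1-5\ka,\al_0-4\ka)$ then arise from asking that $\big[2^{-n(1-\varsigma)}|\xi|+2^{-n(\al_0-\varsigma)}|\eta|\big](1+|\xi|)(1+|\eta|)\lesssim 1$ on $D^{\ka,n}$, not from tuning free parameters $(a,b)$.
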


\

\begin{remark}\label{rk:ka}
The restriction on the \enquote{frequency} parameter $\ka$, that is the condition $0<\ka\leq \frac{\al_0}{5}$, can be interpreted as the result of some interesting competition phenomenon between the $2^{\ka n}$-approximation scaling in step 1 (see \eqref{b-bar-n}) and the size of the discretization grid in the present step 2. From a technical point of view, this competition can be observed through the chain of estimates \eqref{modele}, \eqref{chain-2} and \eqref{chain-3} in the proof of Proposition \ref{prop:discreti-noise}. %From a technical point of view, the \enquote{frequency} parameter $\ka$ (with the restriction $0<\ka\leq \frac{\al_0}{5}$) will play a crucial role in our arguments toward \eqref{speed-converg-1}
At this point, it is not completely clear to us whether some finer estimates or the use of alternative Besov topologies could lead to a less stringent condition on $\ka$, or even to the extension of the estimate \eqref{speed-converg-1} to any $\ka >0$. Also, we do not know whether a similar bound could be established when replacing the approximation $B^{\ka,n}$ with the original sheet $B$ in the right-hand side of \eqref{interpol-bipara-bis} (this would morally corresponds to taking $\ka=\infty$).
\end{remark}

\smallskip

\begin{remark}
Let us briefly go back to the basic comparison sketched out in the introduction between the one-parameter process $\Psi^n$ (defined by \eqref{defi-psi-n}) and the above approximation process $\widetilde{\<Psi>}^{\ka,n}$. Along this analogy, the result of Proposition \ref{speed-converg-1} (morally) corresponds to the parabolic counterpart of the convergence of $\Psi^n$ to $B$. Now recall that a natural strategy to show that $\Psi^n \to B$ in the Sobolev scale $\ch^\ga$ (or more generally $\cw^{\ga,p}$) consists in the use of the continuous embedding (see e.g. \cite{runst-sickel}): for all $\ga\in (0,1)$ and $\varepsilon >0$ small enough,
$$\cs^{\ga+\varepsilon,p} \subset \cw^{\ga,p}$$
where $\cs^{\al,p}$ ($\al\in (0,1)$) refers to the so-called Slobodeckij space
$$\cs^{\al,p}:=\big\{f: \ \iint ds dt\, \frac{|f_t-f_s|^p}{|t-s|^{1+\al p}} <\infty \big\} \, .$$
In this way, convergence of $\Psi^n$ to $B$ in $\cw^{\ga,p}$ (for $\ga<H$) can be easily derived from the well-known (pathwise) H{\"o}lder regularity of $B$.\\
\indent
Unfortunately, due to the negative-order Sobolev regularity of the solution process $\<Psi>$ (as seen in Proposition \ref{sto}), such a simplification through an embedding strategy does not seem available in the present rough heat situation, and thus computations cannot be reduced to a pathwise control of spatial increments.
\end{remark}

\smallskip

\subsection{Proof of Proposition \ref{prop:discreti-noise}} We rely on a similar two-step strategy as in the proof of Proposition \ref{sto}.

\smallskip

\noindent
\textit{Step 1: A moment estimate.} The first (and main) objective is to establish the following bound:
\begin{equation}\label{gene-mom-estimate-psi-psiti}
\int_{\R} dx \, \bigg(\mathbb{E}\Big[ \Big| \cf^{-1}\Big(\{1+|.|^2\}^{-\frac{\al}{2}} \cf\big( \rho \cdot \big[ \widetilde{\<Psi>}^{\ka,n}_{s,t}-\<Psi>^{\ka,n}_{s,t}\big] \big) \Big)(x)\Big|^{2}\Big]\bigg)^p\lesssim 2^{-2n\varsigma p} |t-s|^{2\varepsilon p} \, ,
\end{equation}
for all $p\geq 1$ and $\ka,\varsigma,\varepsilon >0$ small enough, and where the proportional constant does not depend on $n$.

\smallskip

To this end, let us write, with the notation introduced in \eqref{expression-luxo-n-bis} and with representation \eqref{b-bar-n} in mind,
\small
\begin{align*}
&\widetilde{\<Psi>}^{\ka,n}_{s,t}=2^{2n}\sum_{i=0}^{2^n-1} \sum_{j=-2^{2n}}^{2^{2n}-1}G^n_{i,j}(t,x)\,  \sq^n_{i,j}\!\! B^{\ka,n} \\
&=-c_{H}\int_{\{|\xi|\leq 2^{2\ka n}, |\eta|\leq 2^{\ka n}\}}\widehat{W}(d\xi,d\eta) \, \frac{\xi}{|\xi|^{H_0+\frac12}}\frac{\eta}{|\eta|^{H_1+\frac12}}\sum_{i=0}^{2^n-1}\sum_{j=-2^{2n}}^{2^{2n}-1} G_{i,j}^n(s,t;y) \bigg( 2^{2n} \int_{t_i}^{t_{i+1}}du \int_{x_j}^{x_{j+1}}dz\, e^{\imath \xi u}e^{\imath \eta z}\bigg)
\end{align*}
\normalsize
where we have set $c_H:=c_{H_0}c_{H_1}$ and $G^n_{i,j}(s,t;y):=G^n_{i,j}(t,y)-G^n_{i,j}(s,y)$.

\smallskip

Using the representation \eqref{formula-heuris} of $\<Psi>^{\ka,n}$, the difference $\<Psi>^{\ka,n}_{s,t}-\widetilde{\<Psi>}^{\ka,n}_{s,t}$ can now be recast into
\begin{equation*}
\widetilde{\<Psi>}^{\ka,n}_{s,t}(y)-\<Psi>^{\ka,n}_{s,t}(y)=-c_{H}\int_{\{|\xi|\leq 2^{2\ka n}, |\eta|\leq 2^{\ka n}\}}\widehat{W}(d\xi,d\eta) \, \frac{\xi}{|\xi|^{H_0+\frac12}}\frac{\eta}{|\eta|^{H_1+\frac12}} \frx^n_{s,t}(y;\xi,\eta) \, 
\end{equation*}
with 
\begin{align*}
&\frx^n_{s,t}(y;\xi,\eta):=\bigg[\sum_{i=0}^{2^n-1}\sum_{j=-2^{2n}}^{2^{2n}-1} G_{i,j}^n(s,t;y) \bigg( 2^{2n} \int_{t_i}^{t_{i+1}}du \int_{x_j}^{x_{j+1}}dz\, e^{\imath \xi u}e^{\imath \eta z}\bigg)\bigg]-e^{\imath \eta y}\ga_{s,t}(\xi,|\eta|)\, .
\end{align*}
As a result, 
\begin{align*}
&\mathbb{E}\Big[ \big[ \widetilde{\<Psi>}^{\ka,n}_{s,t}-\<Psi>^{\ka,n}_{s,t}\big](y) \overline{ \big[ \widetilde{\<Psi>}^{\ka,n}_{s,t}-\<Psi>^{\ka,n}_{s,t}\big](\yti)}\Big]=c_{H}^2 \int_{\{|\xi|\leq 2^{2\ka n}, |\eta|\leq 2^{\ka n}\}}\frac{d\xi d\eta}{|\xi|^{2H_0-1}|\eta|^{2H_1-1}}\frx^n_{s,t}(y;\xi,\eta) \overline{\frx^n_{s,t}(\yti;\xi,\eta) }
\end{align*}
and thus, setting
$$\cj^{\frx,n}_{s,t}(\xi,\eta,\la):=\int_{\R}dy  \, \rho(y) e^{-\imath \la y}\frx^n_{s,t}(y;\xi,\eta) \, ,$$ 
one has
\begin{align*}
&\mathbb{E}\Big[ \Big| \cf^{-1}\Big(\{1+|.|^2\}^{-\frac{\al}{2}} \cf\Big( \rho \cdot \Big[ \widetilde{\<Psi>}^{\ka,n}_{s,t}-\<Psi>^{\ka,n}_{s,t}\Big]\Big) \Big)(x)\Big|^{2}\Big]\nonumber\\
&=\int_{\R}d\la\int_{\R}d\lati \, e^{\imath x (\la-\lati)} \frac{1}{\{1+|\la|^2\}^{\frac{\al}{2}}}\frac{1}{\{1+|\lati|^2\}^{\frac{\al}{2}}}\\
&\hspace{3cm} \int_{\R}dy  \, \rho(y) e^{-\imath \la y} \int_{\R}d\yti\, \rho(\yti) e^{\imath \lati \yti}\, \mathbb{E}\Big[ \big[ \widetilde{\<Psi>}^{\ka,n}_{s,t}-\<Psi>^{\ka,n}_{s,t}\big](y) \overline{ \big[ \widetilde{\<Psi>}^{\ka,n}_{s,t}-\<Psi>^{\ka,n}_{s,t}\big](\yti)}\Big] \\
&=\int_{\{|\xi|\leq 2^{2\ka n}, |\eta|\leq 2^{\ka n}\}}\frac{d\xi d\eta}{|\xi|^{2H_0-1}|\eta|^{2H_1-1}}\int_{\R}d\la\int_{\R}d\lati \, e^{\imath x (\la-\lati)} \frac{1}{\{1+|\la|^2\}^{\frac{\al}{2}}}\frac{1}{\{1+|\lati|^2\}^{\frac{\al}{2}}}\\
&\hspace{10cm}\cj^{\frx,n}_{s,t}(\xi,\eta,\la) \overline{\cj^{\frx,n}_{s,t}(\xi,\eta,\lati)} \\ 
&=\int_{\{|\xi|\leq 2^{2\ka n}, |\eta|\leq 2^{\ka n}\}}\frac{d\xi d\eta}{|\xi|^{2H_0-1}|\eta|^{2H_1-1}}\int_{\R}d\la\int_{\R}d\lati \, e^{\imath x (\la-\lati)} \frac{1}{\{1+|\eta+\la|^2\}^{\frac{\al}{2}}}\frac{1}{\{1+|\eta+\lati|^2\}^{\frac{\al}{2}}}\\
&\hspace{9cm}\cj^{\frx,n}_{s,t}(\xi,\eta,\eta+\la) \overline{\cj^{\frx,n}_{s,t}(\xi,\eta,\eta+\lati)} \, .
\end{align*}
Based on the latter expression, we get
\begin{align}
&\int_{\R} dx \, \bigg(\mathbb{E}\Big[ \Big| \cf^{-1}\Big(\{1+|.|^2\}^{-\frac{\al}{2}} \cf\big( \rho \cdot \big[ \widetilde{\<Psi>}^{\ka,n}_{s,t}-\<Psi>^{\ka,n}_{s,t}\big] \big) \Big)(x)\Big|^{2}\Big]\bigg)^p\nonumber\\
&=\prod_{i=1}^{p-1}\int_{\{|\xi_i|\leq 2^{2\ka n}, |\eta_i|\leq 2^{\ka n}\}}\frac{d\xi_i d\eta_i}{|\xi_i|^{2H_0-1}|\eta_i|^{2H_1-1}}\nonumber\\
&\hspace{2cm} \int_{\R} \frac{d\la_i}{\{1+|\eta_i+\la_i|^2\}^{\frac{\al}{2}}}\cj^{\frx,n}_{s,t}(\xi_i,\eta_i,\eta_i+\la_i)\int_{\R} \frac{d\lati_i}{\{1+|\eta_i+\lati_i|^2\}^{\frac{\al}{2}}} \overline{\cj^{\frx,n}_{s,t}(\xi_i,\eta_i,\eta_i+\lati_i)}\nonumber\\
&\hspace{1cm}\cdot\int_{\{|\xi_p|\leq 2^{2\ka n}, |\eta_p|\leq 2^{\ka n}\}}\frac{d\xi_p d\eta_p}{|\xi_p|^{2H_0-1}|\eta_p|^{2H_1-1}}\int_{\R} \frac{d\la_p}{\{1+|\eta_p+\la_p|^2\}^{\frac{\al}{2}}}\cj^{\frx,n}_{s,t}(\xi_p,\eta_p,\eta_p+\la_p)\nonumber\\
&\frac{1}{\{1+|\eta_p+(\la_p-\lati_{p-1}+\la_{p-1}-\ldots+\la_1)|^2\}^{\frac{\al}{2}}} \overline{\cj^{\frx,n}_{s,t}(\xi_p,\eta_p,\eta_p+(\la_p-\lati_{p-1}+\la_{p-1}-\ldots+\la_1))}\nonumber\\
&\leq \bigg(\prod_{i=1}^{p-1}\int_{\{|\xi_i|\leq 2^{2\ka n}, |\eta_i|\leq 2^{\ka n}\}}\frac{d\xi_i d\eta_i}{|\xi_i|^{2H_0-1}|\eta_i|^{2H_1-1}}\nonumber\\
&\hspace{2cm} \int_{\R} \frac{d\la_i}{\{1+|\eta_i+\la_i|^2\}^{\frac{\al}{2}}}\big|\cj^{\frx,n}_{s,t}(\xi_i,\eta_i,\eta_i+\la_i)\big|\int_{\R} \frac{d\lati_i}{\{1+|\eta_i+\lati_i|^2\}^{\frac{\al}{2}}} \big|\cj^{\frx,n}_{s,t}(\xi_i,\eta_i,\eta_i+\lati_i)\big|\bigg)\nonumber\\
&\hspace{1cm}\cdot\bigg(\int_{\{|\xi_p|\leq 2^{2\ka n}, |\eta_p|\leq 2^{\ka n}\}}\frac{d\xi_p d\eta_p}{|\xi_p|^{2H_0-1}|\eta_p|^{2H_1-1}}\int_{\R} \frac{d\la_p}{\{1+|\eta_p+\la_p|^2\}^{\al}}\big| \cj^{\frx,n}_{s,t}(\xi_p,\eta_p,\eta_p+\la_p)\big|^2\bigg) \, .\label{mom-p-frx}
\end{align}

\

\

We are now in a position to introduce our main technical estimate toward \eqref{gene-mom-estimate-psi-psiti} (for the sake of clarity, we have postponed the proof of this result to Section \ref{sec:proof-estim-i-frx}):
\begin{proposition}\label{prop:estim-i-frx}
With the above notation, one has, for all $(\xi,\eta)\in \R^2$ and $0<\varepsilon<\min(\frac14,\frac{\al}{2})$,
\begin{align}\label{boun-frx-int-la-1}
&\int_{\R} \frac{d\la}{\{1+|\eta+\la|^2\}^{\frac{\al}{2}}}\big|\cj^{\frx,n}_{s,t}(\xi,\eta,\eta+\la)\big|\lesssim  |t-s|^{\varepsilon}\Big[2^{-n} |\xi|+2^{-n}+2^{-n\al_0} |\eta| \Big] 
\end{align}
and 
\begin{align}\label{boun-frx-int-la-2}
&\int_{\R} \frac{d\la}{\{1+|\eta+\la|^2\}^{\al}}\big|\cj^{\frx,n}_{s,t}(\xi,\eta,\eta+\la)\big|^2\lesssim |t-s|^{2\varepsilon}\Big[ 2^{-n} |\xi|+2^{-n}+2^{-n\al_0} |\eta|  \Big]^2 \, .
\end{align}
\end{proposition}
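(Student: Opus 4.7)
The identity $\cj^{\frx,n}_{s,t}(\xi,\eta,\eta+\la)=\widehat{\rho\cdot\frx^n_{s,t}(\cdot\,;\xi,\eta)}(\eta+\la)$ shows that, up to the change of variable $\mu=\eta+\la$, the quantity to estimate is a Bessel-weighted integral of the spatial Fourier transform of the cut-off error $\rho\cdot\frx^n_{s,t}$. The plan is therefore to derive suitable pointwise and Sobolev bounds on $\rho\cdot\frx^n_{s,t}$ in the $y$-variable, and then to convert them through Plancherel and Cauchy--Schwarz into the two Bessel-weighted inequalities \eqref{boun-frx-int-la-1} and \eqref{boun-frx-int-la-2}.

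\emph{Step 1: decomposition of $\frx^n_{s,t}$.} Rewriting the discrete sum via the definition of $G^n_{i,j}$, one expresses $\frx^n_{s,t}(y;\xi,\eta)$ as the difference between the continuous integral $\int_0^{t}du\int_{\R}dz\,G_{t-u}(y-z)\,e^{\imath\xi u}e^{\imath\eta z}$ (minus its $s$-counterpart) and its Riemann-sum analogue, in which the oscillation is replaced by its average over each rectangle $R^n_{i,j}=[t_i,t_{i+1})\times[x_j,x_{j+1})$ and $z$ is truncated to $[-2^n,2^n]$. This yields a natural splitting $\frx^n_{s,t}=\mathcal{A}^n+\mathcal{B}^n+\mathcal{C}^n$, where $\mathcal{A}^n$ is the time-averaging error (pointwise of size $2^{-n}|\xi|$, using $|e^{\imath\xi u}-e^{\imath\xi u_i}|\lesssim|\xi|\cdot 2^{-n}$ combined with the integrability of $G$), $\mathcal{B}^n$ is the space-averaging error (pointwise of size $2^{-n}|\eta|$), and $\mathcal{C}^n$ is the tail $|z|>2^n$, which is exponentially small thanks to the Gaussian decay of $G$ against the compactly supported $\rho$. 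Interpolating each of these $L^\infty_y$ bounds with a finer $|t-s|$-Lipschitz estimate obtained by $t$-differentiating $G_{t-u}$ (in the spirit of Lemma~\ref{lem:tec}) supplies the H\"older factor $|t-s|^\varepsilon$.

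\emph{Step 2: transfer to the Bessel integral.} Plancherel applied to $\cj^{\frx,n}_{s,t}(\xi,\eta,\cdot)$, combined with the compactness of $\mathrm{supp}(\rho)$, upgrades the $L^\infty_y$-bound of Step 1 into
\[
\|\cj^{\frx,n}_{s,t}(\xi,\eta,\cdot)\|_{L^2(d\mu)}\;\lesssim\;|t-s|^\varepsilon\bigl[2^{-n}|\xi|+2^{-n}+2^{-n}|\eta|\bigr].
\]
Since $\{1+|\mu|^2\}^{-\al}\le 1$, this at once implies \eqref{boun-frx-int-la-2}. For \eqref{boun-frx-int-la-1}, Cauchy--Schwarz against $\{1+|\mu|^2\}^{-\al/2}\in L^2(\R)$ settles the case $\al>1/2$. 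In the remaining range $\al_0<\al\le 1/2$, one has to exploit additional smoothness of $\rho\cdot\frx^n_{s,t}$ coming from the heat-kernel regularization: $\rho\cdot\mathcal{B}^n$ can be shown to belong to a Sobolev space $H^{\beta}$ with $\beta>\tfrac12-\al_0$, and a weighted Cauchy--Schwarz of the form
\[
\int\frac{d\mu}{\{1+|\mu|^2\}^{\al/2}}\bigl|\cj^{\frx,n}_{s,t}(\xi,\eta,\mu)\bigr|\;\le\;\Big(\int\frac{d\mu}{\{1+|\mu|^2\}^{\al+\beta}}\Big)^{1/2}\|\rho\cdot\frx^n_{s,t}\|_{H^\beta}
\]
converts this smoothness into the claimed estimate, in which the space-averaging contribution enters with the relaxed exponent $2^{-n\al_0}$.

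\emph{Main obstacle.} The sensitive point is the exponent $\al_0$ in \eqref{boun-frx-int-la-1}: for $\al\le 1/2$ the Bessel weight $\{1+|\mu|^2\}^{-\al/2}$ fails to be square-integrable, so a direct Plancherel--Cauchy--Schwarz chain only provides the optimal $2^{-n}|\eta|$ in the strict range $\al>1/2$. To reach the full range $\al>\al_0$, the $L^\infty_y$-gain $2^{-n}$ coming from $\mathcal{B}^n$ must be partially traded against the divergence of the Bessel integral as $\al\downarrow\al_0$, which is what produces the weaker factor $2^{-n\al_0}|\eta|$. The critical value $\al_0=1-(2H_0+H_1)$ reflects exactly the roughness threshold from Proposition~\ref{sto}, making this loss unavoidable within the present strategy.
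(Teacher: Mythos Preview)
Your strategy---Plancherel and Sobolev bounds on $\rho\cdot\frx^n_{s,t}$ in the $y$-variable---is genuinely different from the paper's. The paper instead derives a \emph{pointwise} bound on $\cj^{\frx,n}_{s,t}(\xi,\eta,\eta+\la)$ with explicit $1/(1+|\la|)$ decay, obtained by an integration by parts in the spatial variable of the heat-kernel representation (their Lemma~\ref{lem:estim-i-frx}); the $2^{-n\al_0}$ exponent there comes from a commutator term $|e^{-i\la x_j}-e^{-i\la w}|\lesssim 2^{-n\sigma}|\la|^\sigma$ which must be balanced with $\sigma<\al$. Your route to \eqref{boun-frx-int-la-2}, and to \eqref{boun-frx-int-la-1} for $\al>1/2$, is correct and arguably cleaner.

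The gap is in \eqref{boun-frx-int-la-1} for $\al_0<\al\le 1/2$. First, the time-error $\mathcal{A}^n$ is not covered by your $H^\beta$ argument: with the decomposition you describe (time first, keeping $e^{i\eta z}$), one has $\mathcal{A}^n(y)=e^{i\eta y}\cdot K$, so $\|\rho\,\mathcal{A}^n\|_{H^\beta}\sim |K|\,(1+|\eta|)^\beta$, and the Cauchy--Schwarz chain acquires an unwanted $(1+|\eta|)^\beta$ factor, destroying uniformity in $\eta$. The fix is either to use the explicit structure $\widehat{\rho\mathcal{A}^n}(\mu)=K\,\widehat{\rho}(\mu-\eta)$ directly, or to reorganise the splitting so that the time-error carries $\psi^n(z)$ instead of $e^{i\eta z}$. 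Second, your explanation of the exponent $2^{-n\al_0}$ as a ``trade-off'' is a misconception: once you actually bound $\|\partial_y(\rho\,\mathcal{B}^n)\|_{L^2}\lesssim 2^{-n}|\eta|\,|t-s|^{1/2}$ (which follows from $\int_0^t\|\partial_w G_{t-u}-\partial_w G_{s-u}\mathbf{1}_{u<s}\|_{L^1}\,du\lesssim|t-s|^{1/2}$) and interpolate, your own $H^\beta$ method delivers the \emph{stronger} factor $2^{-n}|\eta|$ for all $\al>\al_0$, with no loss. The $\al_0$ in the paper's statement is an artifact of their particular pointwise technique, not of the problem itself.
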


\

\

Injecting \eqref{boun-frx-int-la-1} and \eqref{boun-frx-int-la-2} into \eqref{mom-p-frx}, we obtain that
\begin{align}
&\int_{\R} dx \, \bigg(\mathbb{E}\Big[ \Big| \cf^{-1}\Big(\{1+|.|^2\}^{-\frac{\al}{2}} \cf\big( \rho \cdot \big[ \widetilde{\<Psi>}^{\ka,n}_{s,t}-\<Psi>^{\ka,n}_{s,t}\big] \big) \Big)(x)\Big|^{2}\Big]\bigg)^p\nonumber\\
&\lesssim |t-s|^{2\varepsilon p} \bigg(\int_{\{|\xi|\leq 2^{2\ka n}, |\eta|\leq 2^{\ka n}\}}\frac{d\xi d\eta}{|\xi|^{2H_0-1}|\eta|^{2H_1-1}}\Big|2^{-n} |\xi|+2^{-n}+2^{-n\al_0} |\eta| \Big|^2 \bigg)^p\nonumber\\
&\lesssim 2^{-2n\varsigma p} |t-s|^{2\varepsilon p}\cdot \nonumber\\
&\hspace{0.5cm} \bigg(\int_{\{|\xi|\leq 2^{2\ka n}, |\eta|\leq 2^{\ka n}\}}\frac{d\xi d\eta}{|\xi|^{2H_0-1}|\eta|^{2H_1-1}}\Big|2^{-n(1-\varsigma)} |\xi|+2^{-n(1-\varsigma)}+2^{-n(\al_0-\varsigma)} |\eta| \Big|^2 \bigg)^p \, .\label{modele}
\end{align}
Now picking $\varsigma $ such that $0<\varsigma\leq \min(1-5\ka,\al_0-4\ka)$, one has, for any $(\xi,\eta)$ such that $|\xi|\leq 2^{2\ka n}$ and $|\eta|\leq 2^{\ka n}$, 
\begin{align*}
&\max\big(2^{-n(1-\varsigma)}|\xi|^2 |\eta|,2^{-n(1-\varsigma)}|\xi| |\eta|,2^{-n(\al_0-\varsigma)}|\xi| |\eta|^2\big)\\
&\lesssim \max\big(2^{-n(1-\varsigma-5\ka)},2^{-n(1-\varsigma-3\ka)},2^{-n(\al_0-\varsigma-4\ka)}\big) \lesssim 1 \, ,
\end{align*}
and so
\begin{align}
&\Big|2^{-n(1-\varsigma)} |\xi|+2^{-n(1-\varsigma)}+2^{-n(\al_0-\varsigma)} |\eta| \Big|\lesssim\frac{1}{1+|\xi|}\frac{1}{1+|\eta|} \, ,\label{chain-2}
\end{align}
which entails that
\begin{align}
&\sup_{n\geq 0}\int_{\{|\xi|\leq 2^{2\ka n}, |\eta|\leq 2^{\ka n}\}}\frac{d\xi d\eta}{|\xi|^{2H_0-1}|\eta|^{2H_1-1}}\Big|2^{-n(1-\varsigma)} |\xi|+2^{-n(1-\varsigma)}+2^{-n(\al_0-\varsigma)} |\eta| \Big|^2 \nonumber\\
&\hspace{2cm}\lesssim \bigg( \int_{\R}\frac{d\xi }{|\xi|^{2H_0-1}\{1+|\xi|^2\}}\bigg) \bigg(\int_{\R}\frac{d\eta}{|\eta|^{2H_1-1}\{1+|\eta|^2\}}\bigg) \ < \ \infty \, .\label{chain-3}
\end{align}
Going back to \eqref{modele}, this immediately provides us with the final estimate, namely: for all $p\geq 1$, $0<\varsigma\leq \min(1-5\ka,\al_0-4\ka)$ and $\varepsilon >0$ small enough,
\begin{equation}\label{estim-int-ca-finale}
\int_{\R} dx \, \bigg(\mathbb{E}\Big[ \Big| \cf^{-1}\Big(\{1+|.|^2\}^{-\frac{\al}{2}} \cf\big( \rho \cdot \big[ \widetilde{\<Psi>}^{\ka,n}_{s,t}-\<Psi>^{\ka,n}_{s,t}\big] \big) \Big)(x)\Big|^{2}\Big]\bigg)^p\lesssim 2^{-2n\varsigma p} |t-s|^{2\varepsilon p} \, ,
\end{equation}
for some proportional constant that does not depend on $n$.

\

\

\noindent
\textit{Step 2: Conclusion.} The arguments to conclude are essentially the same as those of the proof of Proposition \ref{sto} (\textit{Step 2}). First, one has, for every $p\geq 1$,
\begin{align*}
\mathbb{E}\Big[ \big\|\rho \cdot \big[ \widetilde{\<Psi>}^{\ka,n}_{s,t}-\<Psi>^{\ka,n}_{s,t}\big]\big\|_{\cw^{-\al,2p}}^{2p}\Big]&=\int_{\R} dx \, \mathbb{E}\Big[ \Big| \cf^{-1}\Big(\{1+|.|^2\}^{-\frac{\al}{2}} \cf\big( \rho \cdot \big[ \widetilde{\<Psi>}^{\ka,n}_{s,t}-\<Psi>^{\ka,n}_{s,t}\big] \big) \Big)(x)\Big|^{2p}\Big]\\
&\lesssim \int_{\R} dx \, \mathbb{E}\Big[ \Big| \cf^{-1}\Big(\{1+|.|^2\}^{-\frac{\al}{2}} \cf\big( \rho \cdot \big[ \widetilde{\<Psi>}^{\ka,n}_{s,t}-\<Psi>^{\ka,n}_{s,t}\big] \big) \Big)(x)\Big|^{2}\Big]^p \, ,
\end{align*}
where the latter inequality is derived from Gaussian hypercontractivity property.

\smallskip

Thus, thanks to \eqref{estim-int-ca-finale}, we get that for all $p\geq 1$, $0<\varsigma\leq \min(1-5\ka,\al_0-4\ka)$ and $\varepsilon >0$ small enough,
$$
\mathbb{E}\Big[ \big\|\rho \cdot \big[\widetilde{\<Psi>}^{\ka,n}_{s,t}-\<Psi>^{\ka,n}_{s,t}\big]\big\|_{\cw^{-\al,2p}}^{2p} \Big]\lesssim 2^{-2n\varsigma p} |t-s|^{2\varepsilon p}  \, .
$$
We can here apply the Garsia-Rodemich-Rumsey estimate and assert that for $\tilde{\varepsilon}>0$,
\begin{equation*}%\label{starting-from}
\mathbb{E} \Big[ \big\|\rho \cdot \big[ \widetilde{\<Psi>}^{\ka,n}-\<Psi>^{\ka,n}\big]\big\|_{\cac([0,1];\cw^{-\al,2p})}^{2p} \Big]^{\frac{1}{2p}}\lesssim 2^{-n\varsigma }\bigg(\int_{[0,1]^2} \frac{du dv}{|u-v|^{2-2p(\varepsilon-\tilde{\varepsilon})}}\bigg)^{\frac{1}{2p}} \, .
\end{equation*}
%By picking $0<\tilde{\varepsilon}<\varepsilon$, we obtain that for any $p\geq 1$ large enough, the latter integral is finite, and this achieves the proof of the desired estimate \eqref{speed-converg-1}.
The almost sure bound \eqref{speed-converg-1} can then be deduced from the same Borel-Cantelli argument as in the proof of Proposition \ref{sto}, and this completes the proof of Proposition \ref{prop:discreti-noise}. 

\

\subsection{Proof of Proposition \ref{prop:estim-i-frx}}\label{sec:proof-estim-i-frx}

Let us first recast $\cj^{\frx,n}_{s,t}(\xi,\eta,\eta+\la)$ as
\begin{align*}
&\cj^{\frx,n}_{s,t}(\xi,\eta,\eta+\la)\\
&=\bigg[\sum_{i=0}^{2^n-1}\sum_{j=-2^{2n}}^{2^{2n}-1} \bigg(\int_{\R}dy  \, \rho(y) e^{-\imath (\eta+\la) y}G_{i,j}^n(s,t;y)\bigg) \bigg( 2^{2n} \int_{t_i}^{t_{i+1}}du \int_{x_j}^{x_{j+1}} dz\, e^{\imath \xi u}e^{\imath \eta z}\bigg)\bigg]-\widehat{\rho}(\la)\ga_{s,t}(\xi,|\eta|)\, .%\label{cj-n-expand}
\end{align*}
Then, since
\begin{align*}
G^n_{i,j}(s,t;y)&=\int_{t_i}^{t_{i+1}} dr \int_{x_j}^{x_{j+1}} dw \, \big\{G_{t-r}(y-w)-G_{s-r}(y-w)\big\}\\
&=\int_{t_i}^{t_{i+1}} dr \int_{x_j}^{x_{j+1}} dw \int_{\R}d\beta\, \big\{\widehat{G_{t-r}}(\be)-\widehat{G_{s-r}}(\be)\big\}e^{\imath \beta(y-w)} \, ,%=\int_0^t ds \int_{\R} dw \, \1_{\{t_i<s<t_{i+1}\}}\1_{\{x_j<y-w<x_{j+1}\}}G_{t-s}(w)
\end{align*}
one can write, for all $i,j$,
\begin{align*}
&\bigg(\int_{\R}dy  \, \rho(y) e^{-\imath (\eta+\la) y}G_{i,j}^n(s,t;y)\bigg) \bigg( 2^{2n} \int_{t_i}^{t_{i+1}}du \int_{x_j}^{x_{j+1}} dz\, e^{\imath \xi u}e^{\imath \eta z}\bigg)\\
&=2^{2n}\int_{\R}dy  \, \rho(y) e^{-\imath y(\eta+\la) }\int_{t_i}^{t_{i+1}} dr \int_{x_j}^{x_{j+1}} dw \int_{\R}d\be \,\big\{\widehat{G_{t-r}}(\be)-\widehat{G_{s-r}}(\be)\big\} e^{\imath \be(y-w)}\int_{t_i}^{t_{i+1}} e^{\imath \xi u} \, du \int_{x_j}^{x_{j+1}} e^{\imath \eta z} \, dz\\
&=\int_{\R}d\be \bigg[\int_{\R}dy  \, \rho(y) e^{-\imath y(\eta+\la-\be) }\bigg]\bigg[2^{n}\int_{t_i}^{t_{i+1}} dr \, \big\{\widehat{G_{t-r}}(\be)-\widehat{G_{s-r}}(\be)\big\}\int_{t_i}^{t_{i+1}} e^{\imath \xi u} \, du \bigg]\\
&\hspace{10cm}\bigg[2^{n}\int_{x_j}^{x_{j+1}} dw \int_{x_j}^{x_{j+1}} dz\, e^{-\imath \be w} e^{\imath \eta z} \bigg]\\
&=\int_{\R}d\be \bigg[\int_{\R}dy  \, \rho(y) e^{-\imath y(\la-\be) }\bigg]\bigg[2^{n}\int_{t_i}^{t_{i+1}} dr \, \big\{\widehat{G_{t-r}}(\eta+\be)-\widehat{G_{s-r}}(\eta+\be)\big\}\int_{t_i}^{t_{i+1}} e^{\imath \xi u} \, du \bigg]\\
&\hspace{9.5cm}\bigg[2^{n}\int_{x_j}^{x_{j+1}} dw \int_{x_j}^{x_{j+1}} dz\, e^{-\imath (\eta+\be) w} e^{\imath \eta z} \bigg] 
\end{align*}
and so
\begin{align*}
&\sum_{i=0}^{2^n-1}\sum_{j=-2^{2n}}^{2^{2n}-1}\bigg(\int_{\R}dy  \, \rho(y) e^{-\imath (\eta+\la) y}G_{i,j}^n(s,t;y)\bigg) \bigg( 2^{2n} \int_{t_i}^{t_{i+1}}du \int_{x_j}^{x_{j+1}}dz\, e^{\imath \xi u}e^{\imath \eta z}\bigg)\\
&\hspace{3cm}=\int_{\R} d\be \, \widehat{\rho}(\la-\be) \ga^{n}_{s,t}(\xi,\eta+\be) \delta^{n}(\eta,\eta+\be)
\end{align*}
with
\begin{equation}\label{defi-ga-n}
\ga^{n}_{t}(\xi,\be):=\int_{0}^{t} dr \, \widehat{G_{t-r}}(\be)\bigg(\sum_{i=0}^{2^n-1}\1_{\{t_i<r<t_{i+1}\}}2^n\int_{t_i}^{t_{i+1}} e^{\imath \xi u} \, du\bigg)
\end{equation}
and
\begin{equation}\label{defi-delta-n}
\delta^{n}(\eta,\be):=\int_{\R} dw   \, e^{-\imath w\be} \bigg(\sum_{j=-2^{2n}}^{2^{2n}-1}\1_{\{x_j<w<x_{j+1}\}}2^n \int_{x_j}^{x_{j+1}}dz \, e^{\imath \eta z}\bigg) \, .
\end{equation}
We have thus derived the representation
\begin{align}
&\cj^{\frx,n}_{s,t}(\xi,\eta,\eta+\la)=\bigg[\int_{\R} d\be \, \widehat{\rho}(\la-\be) \ga^{n}_{s,t}(\xi,\eta+\be) \delta^{n}(\eta,\eta+\be)\bigg]-\widehat{\rho}(\la)\ga_{s,t}(\xi,|\eta|)\, ,\label{cj-n-expand}
\end{align}
which will help us to prove the following intermediate estimate:

\begin{lemma}\label{lem:estim-i-frx}
For all $(\xi,\eta,\la)\in \R^3$, $\ell \geq 1$, $\sigma_1,\sigma_2,\sigma_3\in [0,1]$ and $0<\varepsilon<\frac14$, it holds that 
\begin{align}
&|\cj_{s,t}^{\frx,n}(\xi,\eta,\eta+\la)|\nonumber\\
&\lesssim \big|\widehat{\rho}(\la)\big| |t-s|^{\sigma_1}\{1+|\eta|^{2\sigma_1}\}2^{-n\sigma_2} |\xi|^{\sigma_2}  +\frac{|t-s|^\varepsilon}{1+|\la|}\Big[2^{-n\ell}+2^{-n}|\eta|+2^{-n\sigma_3} |\eta| |\la|^{\sigma_3} \Big] \, ,\label{las-boun}
\end{align}
where the proportional constant does not depend on $n,\xi,\eta,\la$.
\end{lemma}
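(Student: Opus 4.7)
Starting from the representation \eqref{cj-n-expand} just derived, I would use the natural add-and-subtract decomposition
$$\cj^{\frx,n}_{s,t}(\xi,\eta,\eta+\la) = T_1 + T_2,$$
with
$$T_1 := \widehat{\rho}(\la)\bigg\{\int_{\R}d\be\,\ga^n_{s,t}(\xi,\eta+\be)\delta^n(\eta,\eta+\be) - \ga_{s,t}(\xi,|\eta|)\bigg\}$$
and
$$T_2 := \int_{\R}d\be\,\big[\widehat{\rho}(\la-\be)-\widehat{\rho}(\la)\big]\,\ga^n_{s,t}(\xi,\eta+\be)\delta^n(\eta,\eta+\be).$$
The summand $T_1$ collects all the $\widehat{\rho}(\la)$-proportional mass, and thus generates the first term on the right-hand side of \eqref{las-boun}. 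The summand $T_2$ exploits the smoothness of $\rho$ to produce the $(1+|\la|)^{-1}$ decay appearing in the second term.

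The analysis of $T_1$ proceeds by an elementary Fubini rearrangement that reduces $\int d\be\,\ga^n \delta^n$ to a discretized version of $\ga$; the task then becomes estimating $\ga^n_{s,t}(\xi,\eta)-\ga_{s,t}(\xi,|\eta|)$. Here one plays two regimes against each other: the mean-value comparison of $e^{\imath\xi u}$ with its average $2^n\int_{t_i}^{t_{i+1}}e^{\imath\xi v}dv$ on each sub-interval yields a pointwise error of order $2^{-n}|\xi|$, while the trivial bound, together with the $e^{-r|\eta|^2}$ factor in $\widehat{G_r}$ handled in the spirit of Lemma~\ref{lem:tec}, gives a uniform estimate of order $|t-s|^{\sigma_1}\{1+|\eta|^{2\sigma_1}\}$. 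A complex-interpolation argument with exponent $\sigma_2\in[0,1]$ then fuses the two regimes into the target bound $|t-s|^{\sigma_1}\{1+|\eta|^{2\sigma_1}\}\cdot 2^{-n\sigma_2}|\xi|^{\sigma_2}$.

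The bound on $T_2$ is the more delicate step and the place where the main obstacle lies. Since $\rho\in \cac_c^\infty$, the Fourier transform $\widehat{\rho}$ decays faster than any polynomial; in particular, for any large integer $N$ one has $|\widehat{\rho}(\la-\be)-\widehat{\rho}(\la)|\lesssim |\be|/(1+|\la|)^{N}$, which provides the global $(1+|\la|)^{-1}$ factor. The three correction terms inside the bracket then arise by splitting the $\be$-integration: (a)~in a \enquote{far zone} $|\be|\gtrsim 2^n$, the rapid decay of $\widehat\rho$ combined with that of the kernel $\ga^n\delta^n$ yields the arbitrarily fast $2^{-n\ell}$ rate; (b)~in a \enquote{local zone} $|\be|\lesssim 1$, expanding $e^{\imath\eta z}$ at the midpoint of each $[x_j,x_{j+1}]$ inside $\delta^n(\eta,\eta+\be)$ produces a first-order spatial discretization error of size $2^{-n}|\eta|$; (c)~the mixed term $2^{-n\sigma_3}|\eta||\la|^{\sigma_3}$ is obtained by an interpolation across the intermediate scales $1\lesssim|\be|\lesssim 2^n$, balancing the $\be$-derivative estimate on $\widehat\rho(\la-\be)$ against the rougher trivial bound. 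The real technical heart of the argument is the fine analysis of $\delta^n(\eta,\eta+\be)$: rewriting it in closed form as a Dirichlet/Fejér-type kernel in $\be$ times a sinc symbol in $\eta/2^n$, and then simultaneously retaining its concentration at $\be=0$ (for the $2^{-n}$ power) and its oscillation in $\be$ (for the $|\la|$-decay), is the computation that requires the most care. Once these three estimates are assembled and combined with the $|t-s|^\varepsilon$ gain coming from $\ga^n_{s,t}$ (via Lemma~\ref{lem:tec}), the bound \eqref{las-boun} follows.
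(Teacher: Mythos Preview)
Your $T_1$/$T_2$ split is natural, and the treatment of $T_1$ is roughly right in spirit (though the claim that $\int d\be\,\ga^n\delta^n$ collapses via Fubini to ``a discretized version of $\ga$'' is too optimistic: this integral still carries the spatial-truncation error $|w|>2^n$ and the space-discretization error from $\delta^n$, not just the time-discretization error $\ga^n-\ga$). The real gap is in $T_2$. The inequality you rely on, $|\widehat{\rho}(\la-\be)-\widehat{\rho}(\la)|\lesssim |\be|/(1+|\la|)^{N}$, is false: take $\be=\la$ with $|\la|$ large, so that the left-hand side tends to $|\widehat\rho(0)|>0$ while the right-hand side tends to $0$. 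The correct mean-value bound is only $|\widehat{\rho}(\la-\be)-\widehat{\rho}(\la)|\lesssim |\be|\,(1+\min(|\la|,|\la-\be|))^{-N}$, which loses all $\la$-decay precisely on the region $\be\approx\la$. Your zone decomposition in $\be$ cannot rescue this, because $\delta^n(\eta,\eta+\be)$ is a Dirichlet-type kernel in $\be$ (with peaks at $\be\in 2\pi\cdot 2^n\mathbb Z$ and $L^1$-norm growing with $n$), so putting absolute values inside the $\be$-integral is far too crude.

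In the paper the $(1+|\la|)^{-1}$ factor is obtained by a completely different mechanism: one returns to physical space via $\int d\be\,e^{-\imath w\be}\widehat\rho(\la-\be)\ga^n(\xi,\eta+\be)=e^{-\imath\la w}f^n_{s,t}(w)$ with $f^n_{s,t}=\rho\ast\cf^{-1}\big(\ga^n_{s,t}(\xi,\eta+\la-\cdot)\big)$, and then integrates by parts in $w$ against the oscillatory factor $e^{-\imath\la w}$. The three bracketed corrections in \eqref{las-boun} then emerge respectively from the boundary/derivative terms of this integration by parts (the $2^{-n}|\eta|$ and $2^{-n\sigma_3}|\eta||\la|^{\sigma_3}$ pieces), from the heat-kernel tail $|w|\geq 2^n$ (the $2^{-n\ell}$ piece), and from the time-discretization error $\ga^n-\ga$ (the $\widehat\rho(\la)$ term). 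The $|t-s|^\varepsilon$ gain comes not from Lemma~\ref{lem:tec} but from the direct physical-space estimate $\int_\R dw\int_0^t dr\,|G_{t-r}(w)-G_{s-r}(w)|\lesssim|t-s|^\varepsilon$. Your Fourier-side smoothness argument for $\widehat\rho$ does not access this mechanism, and I do not see how to make it produce the needed decay.
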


Before we prove this technical lemma, let us see how it allows us to derive the estimates in Proposition \ref{prop:estim-i-frx}. In fact, applying \eqref{las-boun} with $\ell=1$, $\sigma_1=\varepsilon$, $\sigma_2=1$ and $\sigma_3=\al_0$, we deduce that for every $0<\varepsilon<\frac14$,
\begin{align}
&\int_{\R} \frac{d\la}{\{1+|\eta+\la|^2\}^{\frac{\al}{2}}}\big|\cj^{\frx,n}_{s,t}(\xi,\eta,\eta+\la)\big|\lesssim |t-s|^{\varepsilon}\{1+|\eta|^{2\varepsilon}\}2^{-n} |\xi|\int_{\R}d\la \, \frac{\big|\widehat{\rho}(\la)\big| }{\{1+|\eta+\la|^2\}^{\frac{\al}{2}}}\nonumber\\
&\hspace{4cm}+|t-s|^\varepsilon\Big[2^{-n}+2^{-n}|\eta|+2^{-n\al_0} |\eta| \Big]\int_{\R} \frac{d\la}{\{1+|\eta+\la|^\al\}\{1+|\la|^{1-\al_0}\}}\, .\label{intermed}
\end{align}
At this point, one easily checks that for every $\beta >0$,
\begin{equation}\label{rho-ha-est}
\int_{\R}d\la \, \frac{\big|\widehat{\rho}(\la)\big| }{\{1+|\eta+\la|^2\}^{\frac{\beta}{2}}} \lesssim \frac{1}{1+|\eta|^\beta} \, .
\end{equation}
Also, since $0<\al_0<\min(1,\al)$, it is not hard to see that
$$\sup_{\eta\in \R}\int_{\R} \frac{d\la}{\{1+|\eta+\la|^\al\}\{1+|\la|^{1-\al_0}\}}\ < \ \infty \, ,$$
and therefore \eqref{intermed} leads us to
\begin{align*}
&\int_{\R} \frac{d\la}{\{1+|\eta+\la|^2\}^{\frac{\al}{2}}}\big|\cj^{\frx,n}_{s,t}(\xi,\eta,\eta+\la)\big|\lesssim |t-s|^{\varepsilon}\bigg[2^{-n} |\xi|\frac{1+|\eta|^{2\varepsilon}}{1+|\eta|^{\al}}+2^{-n}+2^{-n\al_0} |\eta| \bigg]
\end{align*}
which, if $0<\varepsilon<\min(\frac14,\frac{\al}{2})$, immediately yields \eqref{boun-frx-int-la-1}.

\smallskip

We can then derive \eqref{boun-frx-int-la-2} with similar arguments. Namely, applying again \eqref{las-boun} with $\ell=1$, $\sigma_1=\varepsilon$, $\sigma_2=1$ and $\sigma_3=\al_0$, we get first that for every $0<\varepsilon<\frac14$,
\begin{align}
&\int_{\R} \frac{d\la}{\{1+|\eta+\la|^2\}^{\al}}\big|\cj^{\frx,n}_{s,t}(\xi,\eta,\eta+\la)\big|^2\lesssim |t-s|^{2\varepsilon}\Big[\{1+|\eta|^{2\varepsilon}\}2^{-n} |\xi|\Big]^2\int_{\R}d\la \, \frac{\big|\widehat{\rho}(\la)\big|^2 }{\{1+|\eta+\la|^2\}^{\al}}\nonumber\\
&\hspace{4cm}+|t-s|^{2\varepsilon}\Big[2^{-n}+2^{-n}|\eta|+2^{-n\al_0} |\eta| \Big]^2\int_{\R} \frac{d\la}{\{1+|\eta+\la|^{2\al}\}\{1+|\la|^{2(1-\al_0)}\}}\, .\label{intermed-2}
\end{align}
For the same elementary reasons as in \eqref{rho-ha-est} (write $\big|\widehat{\rho}(\la)\big|^2=\big|(\widehat{\rho \ast \rho})(\la)\big|$), one has
$$\int_{\R}d\la \, \frac{\big|\widehat{\rho}(\la)\big|^2 }{\{1+|\eta+\la|^2\}^{\al}} \lesssim \frac{1}{1+|\eta|^{2\al}}\, ,$$
and again, since $0<\al_0<\min(1,\al)$, it is easy to check that
$$\sup_{\eta\in \R}\int_{\R} \frac{d\la}{\{1+|\eta+\la|^{2\al}\}\{1+|\la|^{2(1-\al_0)}\}}\ < \ \infty \, .$$
Therefore, we end up with
\begin{align*}
&\int_{\R} \frac{d\la}{\{1+|\eta+\la|^2\}^{\al}}\big|\cj^{\frx,n}_{s,t}(\xi,\eta,\eta+\la)\big|^2\lesssim |t-s|^{2\varepsilon}\bigg[2^{-n} |\xi|\frac{1+|\eta|^{2\varepsilon}}{1+|\eta|^{\al}}+2^{-n}+2^{-n\al_0} |\eta| \bigg]^2
\end{align*}
and by picking $0<\varepsilon<\min(\frac14,\frac{\al}{2})$, we obtain \eqref{boun-frx-int-la-2}, which achieves the proof of Proposition \ref{prop:estim-i-frx}.

\

Thus, it only remains us to provide the details behind Lemma \ref{lem:estim-i-frx}.
\begin{proof}[Proof of Lemma \ref{lem:estim-i-frx}]
Going back to the definition \eqref{defi-delta-n} of $\delta^n$, one has
\begin{align*}
&\delta^{n}(\eta,\eta+\be)=\int_{\R} dw   \, e^{-\imath w\be} \bigg(\sum_{j=-2^{2n}}^{2^{2n}-1}\1_{\{x_j<w<x_{j+1}\}}2^n \int_{x_j}^{x_{j+1}}dz \, e^{\imath \eta (z-w)}\bigg)\\
&=\int_{\R} dw   \, e^{-\imath w\be} \bigg(\sum_{j=-2^{2n}}^{2^{2n}-1}\1_{\{x_j<w<x_{j+1}\}}2^n \int_{x_j}^{x_{j+1}}dz \, \{e^{\imath \eta (z-w)}-1\}\bigg)+\int_{\R} dw   \, e^{-\imath w\be}\1_{\{|w|\leq 2^n\}}
\end{align*}
so that, according to \eqref{cj-n-expand}, we can write
\begin{align}
&\cj_{s,t}^{\frx,n}(\xi,\eta,\eta+\la)\nonumber\\
&=\bigg[\int_{\R} dw   \bigg(\int_{\R} d\be \, e^{-\imath w\be} \widehat{\rho}(\la-\be) \ga^{n}_{s,t}(\xi,\eta+\be)  \bigg) \bigg(\sum_{j=-2^{2n}}^{2^{2n}-1}\1_{\{x_j<w<x_{j+1}\}}2^n \int_{x_j}^{x_{j+1}}dz \, \{e^{\imath \eta (z-w)}-1\}\bigg)\bigg]\nonumber\\
&\hspace{3cm}+\int_{\R} dw   \, \1_{\{|w|\leq 2^n\}} \int_{\R} d\be \, e^{-\imath w\be}\widehat{\rho}(\la-\be) \ga^{n}_{s,t}(\xi,\eta+\be) -\widehat{\rho}(\la)\ga_{s,t}(\xi,|\eta|)\nonumber\\
&=\bigg[\int_{\R} dw   \bigg(\int_{\R} d\be \, e^{-\imath w\be} \widehat{\rho}(\la-\be) \ga^{n}_{s,t}(\xi,\eta+\be)  \bigg) \bigg(\sum_{j=-2^{2n}}^{2^{2n}-1}\1_{\{x_j<w<x_{j+1}\}}2^n \int_{x_j}^{x_{j+1}}dz \, \{e^{\imath \eta (z-w)}-1\}\bigg)\bigg]\nonumber\\
&\hspace{1cm}+\bigg[-\int_{|w|\geq 2^n} dw   \int_{\R} d\be \, e^{-\imath w\be}\widehat{\rho}(\la-\be) \ga^{n}_{s,t}(\xi,\eta+\be)\bigg]+\bigg[\widehat{\rho}(\la)\ga^n_{s,t}(\xi,\eta) -\widehat{\rho}(\la)\ga_{s,t}(\xi,|\eta|)\bigg]\nonumber\\
&=: \mathbb{I}^n_{s,t}(\xi,\eta,\la)+\mathbb{II}^n_{s,t}(\xi,\eta,\la)+\mathbb{III}^n_{s,t}(\xi,\eta,\la) \, .\label{decompo-i-ii-iii}
\end{align}

\smallskip

Let us estimate these three quantities separately.

\

\noindent
\textit{Treatment of $\mathbb{I}^n_{s,t}(\xi,\eta,\la)$.} We have
\begin{equation}\label{f-n-s-t}
\int_{\R} d\be \, e^{-\imath w\be} \widehat{\rho}(\la-\be) \ga^{n}_{s,t}(\xi,\eta+\be)=e^{-\imath \la w}\int_{\R} d\be \, e^{\imath w\be} \widehat{\rho}(\be) \ga^{n}_{s,t}(\xi,\eta+\la-\beta)=e^{-\imath\la w} f^n_{s,t}(w;\xi,\eta,\la)  \, ,
\end{equation}
with
\begin{equation}\label{defi:f-n-s-t}
f^n_{s,t}(w;\xi,\eta,\la):=\bigg[\rho\ast \cf^{-1}\Big(\ga^{n}_{s,t}(\xi,\eta+\la-.)\Big)\bigg](w) \, .
\end{equation}
Using the additional notation 
$$h^{n,j}(w;\eta):=2^n\int_{x_j}^{x_{j+1}}dz \, \big\{e^{\imath \eta (z-w)}-1\big\} \, ,$$
we can rewrite $\mathbb{I}^n_{s,t}(\xi,\eta,\la)$ as
$$\mathbb{I}^n_{s,t}(\xi,\eta,\la)=\sum_{j=-2^{2n}}^{2^{2n}-1} \int_{x_j}^{x_{j+1}}dw \, e^{-\imath \la w} f^n_{s,t}(w;\xi,\eta,\la)h^{n,j}(w;\eta) \, .$$
On the one hand, it is clear that $| h^{n,j}(w;\eta) |\leq 2^{-n}|\eta|$ for any $w\in [x_j,x_{j+1}]$, which immediately yields
\begin{equation}\label{unif-bou-la}
\big|\mathbb{I}^n_{s,t}(\xi,\eta,\la)\big|\leq 2^{-n}|\eta| \, \big\| f^n_{s,t}(.;\xi,\eta,\la) \big\|_{L^1}  \ .
\end{equation}
On the other hand, one has for every $j$,
\begin{align}
&\int_{x_j}^{x_{j+1}}dw \, e^{-\imath \la w} f^n_{s,t}(w;\xi,\eta,\la)h^{n,j}(w;\eta)\nonumber\\
&=\frac{-1}{\imath \la} \bigg[ e^{-\imath\la x_{j+1}} f^n_{s,t}(x_{j+1};\xi,\eta,\la)h^{n,j}(x_{j+1};\eta)-e^{-\imath \la x_j} f^n_{s,t}(x_j;\xi,\eta,\la)h^{n,j}(x_j;\eta)\nonumber\\
&\hspace{1cm}-\int_{x_j}^{x_{j+1}} dw \, e^{-\imath \la w} (\partial_w f^n_{s,t})(w;\xi,\eta,\la)h^{n,j}(w;\eta)-\int_{x_j}^{x_{j+1}} dw \, e^{-\imath \la w} f^n_{s,t}(w;\xi,\eta,\la)(\partial_w h^{n,j})(w;\eta)\bigg]\nonumber\\
&=\frac{-1}{\imath \la} \Big[ I^{n,j}_{s,t}+I\! I^{n,j}_{s,t}+I\! I\! I^{n,j}_{s,t} \Big] \, ,\label{sum-a-b-c-j}
\end{align}
with
$$I^{n,j}_{s,t}:=\big\{e^{-\imath\la x_{j+1}} f^n_{s,t}(x_{j+1};\xi,\eta,\la)-e^{-\imath\la x_{j}} f^n_{s,t}(x_{j};\xi,\eta,\la)\big\}h^{n,j}(x_{j+1};\eta) \, ,$$
$$ I\! I^{n,j}_{s,t}:=-\int_{x_j}^{x_{j+1}} dw \, e^{-\imath \la w} (\partial_w f^n_{s,t})(w;\xi,\eta,\la)h^{n,j}(w;\eta) \, ,$$
$$ I\! I\! I^{n,j}_{s,t}:=\int_{x_j}^{x_{j+1}} dw \, \big\{e^{-\imath\la x_{j}} f^n_{s,t}(x_{j};\xi,\eta,\la)-e^{-\imath \la w} f^n_{s,t}(w;\xi,\eta,\la)\big\}(\partial_w h^{n,j})(w;\eta) \, .$$

\

First, since $h^{n,j}(x_{j+1};\eta)=2^n\int_{0}^{2^{-n}}dz \, \big\{e^{-\imath \eta z}-1\big\}$ does not depend on $j$, we get that
\begin{align}\label{sum-a-j}
\bigg|\sum_{j=-2^{2n}}^{2^{2n}-1} I^{n,j}_{s,t} \bigg|&=\bigg|\bigg(2^n\int_{0}^{2^{-n}}dz \, \big\{e^{-\imath \eta z}-1\big\}\bigg) \big\{ e^{-\imath\la 2^n} f^n_{s,t}(2^n;\xi,\eta,\la)-e^{\imath\la 2^n} f^n_{s,t}(-2^n;\xi,\eta,\la)\big\} \bigg|\\
&\lesssim 2^{-n}|\eta|\, \big\|f^n_{s,t}(.;\xi,\eta,\la)\big\|_{L^\infty} \, .
\end{align}

\

Then
\begin{equation}\label{sum-b-j}
\bigg| \sum_{j=-2^{2n}}^{2^{2n}-1}  I\! I^{n,j}_{s,t}\bigg|\lesssim 2^{-n}|\eta| \sum_{j=-2^{2n}}^{2^{2n}}\int_{x_j}^{x_{j+1}}dw\,  \big|(\partial_w f^n_{s,t})(w;\xi,\eta,\la) \big| \lesssim 2^{-n}|\eta|\, \big\|(\partial_w f^n_{s,t})(.;\xi,\eta,\la)\big\|_{L^1} \, .
\end{equation}

\

Finally, since $|(\partial_w h^{n,j})(w;\eta)|=\big|(-\imath \eta)2^n\int_{x_j}^{x_{j+1}}dz \, e^{\imath \eta (z-w)}\big|\leq |\eta|$, we have for any $\sigma\in [0,1]$
\begin{align*}
|I\! I\! I^{n,j}_{s,t}|&\leq |\eta| \int_{x_j}^{x_{j+1}} dw \, \Big[\big|f^n_{s,t}(x_{j};\xi,\eta,\la)-f^n_{s,t}(w;\xi,\eta,\la)\big|+\big|e^{-\imath\la x_{j}}-e^{-\imath \la w}\big| \big|f^n_{s,t}(w;\xi,\eta,\la)\big|\Big]\\
&\leq |\eta| \bigg[\int_{x_j}^{x_{j+1}} dw \int_{x_j}^w dv \, \big| \big(\partial_w f^n_{s,t}\big)(v;\xi,\eta,\la)\big|+ |\la|^{\sigma} 2^{-n\sigma}\int_{x_j}^{x_{j+1}} dw \, \big|f^n_{s,t}(w;\xi,\eta,\la)\big| \bigg]\\
&\leq |\eta| \bigg[2^{-n}\int_{x_j}^{x_{j+1}} dv \, \big| \big(\partial_w f^n_{s,t}\big)(v;\xi,\eta,\la)\big|+ |\la|^{\sigma} 2^{-n\sigma}\int_{x_j}^{x_{j+1}} dw \, \big|f^n_{s,t}(w;\xi,\eta,\la)\big| \bigg] \, ,
\end{align*}
which immediately entails
\begin{equation}\label{sum-c-j}
\bigg|\sum_{j=-2^{2n}}^{2^{2n}-1} I\! I\! I^{n,j}_{s,t} \bigg| \leq \bigg[ 2^{-n}|\eta|\,  \big\| \big(\partial_w f^n_{s,t}\big)(.;\xi,\eta,\la) \big\|_{L^1}+2^{-n\sigma} |\eta| |\la|^{\sigma}\,  \big\| f^n_{s,t}(.;\xi,\eta,\la) \big\|_{L^1} \bigg]\, .
\end{equation}

\

\

Injecting \eqref{sum-a-j}-\eqref{sum-b-j}-\eqref{sum-c-j} into \eqref{sum-a-b-c-j}, and then combining the result with \eqref{unif-bou-la}, we obtain that
\begin{align}
&\big|\mathbb{I}^n_{s,t}(\xi,\eta,\la) \big|\nonumber\\
&\lesssim \frac{1}{1+|\la|}\Big[2^{-n}|\eta|+2^{-n\sigma} |\eta| |\la|^\sigma \Big] \Big[\big\| f^n_{s,t}(.;\xi,\eta,\la) \big\|_{L^\infty}+\big\| f^n_{s,t}(.;\xi,\eta,\la) \big\|_{L^1} +\big\| \big(\partial_w f^n_{s,t}\big)(.;\xi,\eta,\la) \big\|_{L^1} \Big] \, .\label{first-bou-r-n}
\end{align}

\

In order to go further, note that by the definition of $f^n_{s,t}$, and since $\rho\in \cac_c^\infty(\R)$, one has
\begin{equation}\label{interm-j-1}
\big\| f^n_{s,t}(.;\xi,\eta,\la) \big\|_{L^\infty}+\big\| f^n_{s,t}(.;\xi,\eta,\la) \big\|_{L^1} +\big\| \big(\partial_w f^n_{s,t}\big)(.;\xi,\eta,\la) \big\|_{L^1}  \lesssim \Big\|\cf^{-1}\Big(\ga^{n}_{s,t}(\xi,\eta+\la-.)\Big)\Big\|_{L^1} \, .
\end{equation}

\

Then observe that
\begin{equation}\label{f-1-ga}
\cf^{-1}\Big(\ga^{n}_{s,t}(\xi,\eta+\la-.)\Big)(w)=e^{\imath w (\la+\eta)}\int_0^t dr \, \big\{G_{t-r}(w)-G_{s-r}(w)\big\}\bigg[ \sum_{i=0}^{2^{n}-1}  \1_{\{t^{n}_i < r<t^{n}_{i+1}\}} 2^{n}\int_{t_i}^{t_{i+1}} du\, e^{\imath \xi u}  \bigg]  \, ,
\end{equation}
and so, for any $0<\varepsilon<\frac14$,
\begin{equation}\label{interm-j-2}
\Big\|\cf^{-1}\Big(\ga^{n}_{s,t}(\xi,\eta+\la-.)\Big)\Big\|_{L^1}\leq \int_{\R}dw\int_{0}^{t} dr \, \big| G_{t-r}(w)-G_{s-r}(w)\big| \lesssim |t-s|^{\varepsilon} \, .
\end{equation}

\

Injecting \eqref{interm-j-1}-\eqref{interm-j-2} into \eqref{first-bou-r-n}, we get that for any $\si\in [0,1]$ and $0<\varepsilon<\frac14$,
\begin{equation}\label{estimate-i}
\big|\mathbb{I}^n_{s,t}(\xi,\eta,\la) \big|\lesssim \frac{|t-s|^\varepsilon}{1+|\la|}\Big[2^{-n}|\eta|+2^{-n\sigma} |\eta| |\la|^\sigma \Big]  \, ,
\end{equation}
for some proportional constant that does not depend on $\xi,\eta,\la$.

\

\

\noindent
\textit{Treatment of $\mathbb{II}^n_{s,t}(\xi,\eta,\la)$.} Following \eqref{f-n-s-t}, we can write
$$
\mathbb{II}^n_{s,t}(\xi,\eta,\la)=-\int_{|w|\geq 2^{n}}dw \, e^{-\imath\la w} f^n_{s,t}(w;\xi,\eta,\la)
$$
with $f^n_{s,t}$ defined by \eqref{defi:f-n-s-t}, and so
\begin{align}
&\big|\mathbb{II}^n_{s,t}(\xi,\eta,\la)\big|=\bigg|\int_{|w|\geq 2^{n}}dw\, e^{-\imath \la w} f^n_{s,t}(w;\xi,\eta,\la)\bigg|\nonumber\\
&\lesssim \max\bigg( \int_{|w|\geq 2^{n}}dw\, \big|f^n_{s,t}(w;\xi,\eta,\la)\big|, \frac{1}{|\la|} \bigg\{\sup_{|w|\geq 2^{n}} \big|f^n_{s,t}(w;\xi,\eta,\la)\big|+\int_{|w|\geq 2^{n}}dw\, \big|(\partial_w f^n_{s,t})(w;\xi,\eta,\la)\big|\bigg\}\bigg)\ .\label{born-ii}
\end{align}
Remember that $f^n_{s,t}=\rho\ast \cf^{-1}\big(\ga^{n}_{s,t}(\xi,\eta+\la-.)\big)$, and so, since $\text{supp} \, \rho\subset [-A,A]$ for some $A>0$, it is readily checked that
\begin{align}
&\int_{|w|\geq 2^{n}}dw\, \big|f^n_{s,t}(w;\xi,\eta,\la)\big|+\sup_{|w|\geq 2^{n}} \big|f^n_{s,t}(w;\xi,\eta,\la)\big|+\int_{|w|\geq 2^{n}}dw\, \big|(\partial_w f^n_{s,t})(w;\xi,\eta,\la)\big|\nonumber\\
&\lesssim \int_{|w|\geq 2^n-A} dw \Big|\cf^{-1}\Big(\ga^{n}_{s,t}(\xi,\eta+\la-.)\Big)(w)\Big| \lesssim \int_{|w|\geq 2^{n-1}} dw \Big|\cf^{-1}\Big(\ga^{n}_{s,t}(\xi,\eta+\la-.)\Big)(w)\Big| \, ,\label{interm-j-1-b}
\end{align}
for any $n$ large enough. Then, using the representation \eqref{f-1-ga}, we deduce that for all $\ell\geq 1$ and $0<\varepsilon<\frac14$,
\begin{align}
&\int_{|w|\geq 2^{n-1}} dw \Big|\cf^{-1}\Big(\ga^{n}_{s,t}(\xi,\eta+\la-.)\Big)(w)\Big|\leq \int_{|w|\geq 2^{n-1}}dw\int_{0}^{t} dr \, \big| G_{t-r}(w)-G_{s-r}(w)\big| \lesssim 2^{-n\ell} |t-s|^\varepsilon \, .\label{interm-j-2-b}
\end{align}
Combining \eqref{interm-j-1-b}-\eqref{interm-j-2-b} with \eqref{born-ii}, we obtain that
\begin{equation}\label{fin-boun-r-1-c}
\big| \mathbb{II}^n_{s,t}(\xi,\eta,\la) \big| \lesssim \frac{|t-s|^\varepsilon}{1+|\la|}2^{-n\ell}   \, ,
\end{equation}
for all $\ell\geq 1$ and $0<\varepsilon<\frac14$. 

\

\noindent
\textit{Treatment of $\mathbb{III}^n_{s,t}(\xi,\eta,\la)$.} We need to control the difference $\ga^n_{s,t}(\xi,\eta)-\ga_{s,t}(\xi,|\eta|)$. In fact, with expression \eqref{defi-ga-t} in mind, one has
\begin{align*}
&\ga^{n}_{s,t}(\xi,\eta)\\
&=\int_0^t dr \, \big\{\widehat{G_{t-r}}(\eta)-\widehat{G_{s-r}}(\eta)\big\} e^{\imath \xi r} \bigg[ \sum_{i=0}^{2^{n}-1} \1_{\{t_i < r<t_{i+1}\}}2^{n} \int_{t_i}^{t_{i+1}} du\, e^{\imath \xi(u-r)} \bigg]\\
&=\ga_{s,t}(\xi,|\eta|)+\int_0^t dr \, \big\{\widehat{G_{t-r}}(\eta)-\widehat{G_{s-r}}(\eta)\big\} e^{\imath \xi r} \bigg[ \sum_{i=0}^{2^{n}-1} \1_{\{t_i < r<t_{i+1}\}}2^{n} \int_{t_i}^{t_{i+1}} du\, \big\{e^{\imath \xi(u-r)}-1\big\} \bigg] \, .
\end{align*}
Now it is clear that for any $\sigma_1,\sigma_2\in [0,1]$,
\begin{align*}
&\bigg|\int_0^t dr \, \big\{\widehat{G_{t-r}}(\eta)-\widehat{G_{s-r}}(\eta)\big\} e^{\imath \xi r} \bigg[ \sum_{i=0}^{2^{n}-1} \1_{\{t_i < r<t_{i+1}\}}2^{n} \int_{t_i}^{t_{i+1}} du\, \big\{e^{\imath \xi(u-r)}-1\big\} \bigg] \bigg|\\
&\lesssim 2^{-n\sigma_1} |\xi|^{\sigma_1} \bigg[\int_0^s dr \, \big|e^{-(t-r)|\eta|^2}-e^{-(s-r)|\eta|^2} \big| +\int_s^t dr \, e^{-(t-r)|\eta|^2}\bigg]\lesssim 2^{-n\sigma_1} |\xi|^{\sigma_1} |t-s|^{\sigma_2} \{1+|\eta|^{2\sigma_2}\}  \, .
\end{align*}
Therefore, we can conclude that
\begin{equation}\label{boun-f-n-c}
\big|\mathbb{III}^n_{s,t}(\xi,\eta,\la)\big|\lesssim \big|\widehat{\rho}(\la)\big| 2^{-n\sigma_1} |\xi|^{\sigma_1} |t-s|^{\sigma_2} \{1+|\eta|^{2\sigma_2}\}\, .
\end{equation}

\

\

Injecting \eqref{estimate-i}, \eqref{fin-boun-r-1-c} and \eqref{boun-f-n-c} into \eqref{decompo-i-ii-iii}, we finally get the desired estimate \eqref{las-boun}.

\end{proof}

\section{Space-time discretization of the heat operator}\label{sec:space-discret}

At this point of the analysis, we are endowed with the approximation $\widetilde{\<Psi>}^{\ka,n}$ of the solution, derived from the discretization $\partial_t \partial_x \widetilde{B}^{\ka,n}$ of the noise (see \eqref{interpol-bipara-bis}). Otherwise stated, for all fixed $\ka>0$ and $n\geq 1$, $\widetilde{\<Psi>}^{\ka,n}$ corresponds to the classical solution of the standard heat equation
\begin{equation}\label{start-equation-space-discret}
\begin{cases}
\partial_t \<Psi>^{\ka,n}=\Delta \<Psi>^{\ka,n}+\partial_t\partial_x B^{\ka,n}\, , \quad t\in [0,1], \ x\in \R,\\
\<Psi>^{\ka,n}_0=0\, .
\end{cases}
\end{equation}

This section is devoted to the third and final step of our discretization procedure, namely the space-time discretization of the heat operator in \eqref{start-equation-space-discret}, \emph{for fixed $\ka>0,n\geq 1$} (which implies that $\partial_t \partial_x \widetilde{B}^{\ka,n}$ is here regarded as a well-defined bounded function).

\smallskip

To this end, we will rely on a Galerkin-type approximation strategy. Thus, let us start our analysis by recalling a few general principles about such a finite-element discretization method.

\subsection{A few basics about Galerkin approximation method}

Consider a general index $I$ and a set $\Phi:=(\Phi_j)_{j\in I}$, where each $\Phi_j$ is a positive compactly-supported function in $H^1(\R)$. For every $\Omega\subset \R$, denote by $I^{(\Phi,\Omega)}$ the set of indexes $j\in I$ such that $\text{supp}\, \Phi_j \subset \Omega$. Finally, for any set $\Omega\subset \R$ such that $I^{(\Phi,\Omega)}$ is finite, we define the finite-dimensional subspace $\cs^{(\Phi,\Omega)}$ as
$$\cs^{(\Phi,\Omega)}:=\text{Span}\big\{\Phi_j,\, j\in I^{(\Phi,\Omega)}\big\}.$$

\smallskip

The (space-time) Galerkin approximation operator $\cg^{(\Phi,\Omega)}_{D}$ can now be defined as follows. 

\begin{definition}
In the above setting, let $I$ and $\Omega\subset \R$ be such that $I^{(\Phi,\Omega)}$ is finite. Also, let $D=\{0=t_0<t_1<\ldots<t_M=1\}$ be a subdivisition of the time interval $[0,1]$. Then, for any function $u\in H^{(1,2)}_{loc}([0,1]\times \R)$ such that $u_0=0$, we define $\bar{u}:=\cg^{(\Phi,\Omega)}_{D}(u)$ as the set of functions
$$\bar{u}_{t_i} \in H^{(1,1)}(\R),\quad i=0,\ldots,M,$$
satisfying:

\smallskip

\noindent
$(i)$ $\bar{u}_0=0$;

\smallskip

\noindent
$(ii)$ for every $i=1,\ldots,M$, $\bar{u}_{t_i}\in \cs^{(\Phi,\Omega)}$;

\smallskip

\noindent
$(iii)$ for all $i=1,\ldots,M$ and $\vp\in \cs^{(\Phi,\Omega)}$,
\begin{equation}\label{identity-rigour}
\frac{1}{t_i-t_{i-1}}\, \langle \bar{u}_{t_i}-\bar{u}_{t_{i-1}},\vp\rangle+\langle \nabla \bar{u}_{t_i},\nabla\vp\rangle= \frac{1}{t_i-t_{i-1}}\int_{t_{i-1}}^{t_i} ds \, \langle (\partial_t u)_s-\Delta u_s,\vp\rangle \, .
\end{equation}
Here and for the rest of the section, the notation $\langle .,.\rangle$ refers to the product in $L^2(\R)$.
\end{definition}

\smallskip

\begin{remark}
Observe that identity \eqref{identity-rigour} morally coincides with the (discrete) heat equation
\begin{equation}\label{identity-galerkin}
\langle \bigg[\frac{1}{t_i-t_{i-1}}( \bar{u}_{t_i}-\bar{u}_{t_{i-1}})\bigg],\vp\rangle= \langle \bigg[\Delta \bar{u}_{t_i}+\frac{1}{t_i-t_{i-1}}\int_{t_{i-1}}^{t_i} ds\, \big[ (\partial_t u)_s-\Delta u_s\big]\bigg],\vp\rangle
 \, ,
\end{equation}
%i.e. (morally)
%$$(t_i-t_{i-1})( \bar{u}_{t_i}-\bar{u}_{t_{i-1}})+\Delta \bar{u}_{t_i}= (t_i-t_{i-1})\int_{t_{i-1}}^{t_i} ds \, \big[ (\partial_t u)_s-\Delta u_s\big] \quad \text{on} \ \cs^{(\Phi,\Omega)}\, ,$$
for every $\vp\in \cs^{(h,L)}$. However, elements in $\cs^{(\Phi,\Omega)}$ (such as $\bar{u}_{t_i}$) may not belong to $H^{2}(\R)$ (each $\Phi_j$ is only assumed to be in $H^1(\R)$), and therefore the quantity $\langle \Delta \bar{u}_{t_i},\vp\rangle$ can only be interpreted as $-\langle \nabla \bar{u}_{t_i},\nabla\vp\rangle$ in this setting.
\end{remark}

\begin{remark}
To be more specific in the terminology, the above-defined operation $\cg^{(\Phi,\Omega)}_{D}$ corresponds in fact to the combination of a (space) Galerkin projection and a (time) implicit Euler scheme.
\end{remark}

\

Note that, as a function with values in $\cs^{(\Phi,\Omega)}$, $\bar{u}:=\cg^{(\Phi,\Omega)}_m(u)$ can be expanded as a finite sum 
$$\bar{u}_{t_i}(x)=\sum_{j\in I^{(\Phi,\Omega)}} \bbu^j_{t_i}  \Phi_j(x)$$
for some vectors $\bbu_{t_i} \in \R^{I^{(\Phi,\Omega)}}$ ($i=0,\ldots,M$). Then, identity \eqref{identity-rigour} (applied to $\vp=\Phi_k$, $k\in I_{\Phi}^{\Omega}$) immediately gives rise to the following recursive scheme for $\bbu$: $\bbu_0=0$ and for $i=1,\ldots,m$
\begin{equation}\label{first-equation-bu}
\frac{1}{t_i-t_{i-1}}\ca_{(\Phi,\Omega)} (\bbu_{t_{i}}-\bbu_{t_{i-1}})+\cb_{(\Phi,\Omega)} \bbu_{t_i}=\frac{1}{t_i-t_{i-1}}\int_{t_{i-1}}^{t_i} ds \, \langle (\partial_t u)_s-\Delta u_s,\Phi\rangle \, ,
\end{equation}
where
\begin{equation}\label{defi-ca-cb}
\ca_{(\Phi,\Omega)}:=\big( \langle \Phi_j,\Phi_k\rangle\big)_{j,k\in I^{(\Phi,\Omega)}} \quad , \quad \cb_{(\Phi,\Omega)}:=\big( \langle \nabla \Phi_j,\nabla \Phi_k\rangle\big)_{j,k\in I^{(\Phi,\Omega)}}
\end{equation}
and where, for every function $f\in L^2_{loc}(\R)$, we define the vector $\langle f,\Phi\rangle$ by
$$(\langle f,\Phi \rangle)^j:=\langle f,\Phi_j \rangle\, , \quad j\in I^{(\Phi,\Omega)} \, .$$
%In fact, the matrices $\ca$ and $\cb$ are such that if $v:=\sum_{j\in I^{(\Phi,\Omega)}} \bv^j \Phi_j$, then
%$$\|v\|_{L^2}^2=\langle \bv,\ca \bv\rangle \, .$$

\

%\

We now specialize the analysis by considering, for every $h>0$, the basis of functions $(\Phi^h_j)_{j\in \mathbb{Z}}$ given by 
\begin{equation}
\Phi_j^h(x):=
\begin{cases}
\frac{1}{h}(x-x_{j-1}) & \text{if}\ x \in [x_{j-1},x_j]\\
\frac{1}{h}(x_{j+1}-x) & \text{if} \ x\in [x_j,x_{j+1}]\\
0& \text{otherwise} \, ,
\end{cases}
\end{equation}
where the subdivision points $(x_j)_{j\in \mathbb{Z}}$ are here merely defined as $x_j:=j h$. Besides, for every $L>0$, we will focus on the elementary domain $\Omega=\Omega_L:=[-L,L]$, and on the uniform subdivision $D_m:=\{t_i=t_i^m:=\frac{i}{2^m}, \, i=0,\ldots,2^m\}$ of $[0,1]$.

\smallskip

Thus, from now on and for the rest of the section, we restrict our attention to the Galerkin approximation operator $\cg^{(h,L)}_m$ associated with the basis $\Phi^h:=(\Phi^h_{j})_{j\in \mathbb{Z}}$, the domain $\Omega_L$ and the subdivision $D_m$. With the above notation, this corresponds to taking $\cg^{(h,L)}_m:=\cg^{(\Phi^h,\Omega_L)}_{D_m}$. In the same way, we denote by $\ca_{h,L}$ and $\cb_{h,L}$the matrices introduced in \eqref{defi-ca-cb}, when specialized to the basis $\Phi^h$ and the domain $\Omega_L$, and set $\cs^{(h,L)}:=\cs^{(\Phi^h,\Omega_L)}$.

\smallskip

{\it We will always assume that $L=N  h$ for some positive integer $N$, so that $I^{(h,L)}:=I^{(\Phi^h,\Omega_L)}=\{-N+1,...,N-1\}$.}

\

For the sake of clarity, let us rewrite the characterization of the Galerkin approximation in this setting. Namely, if $\bar{u}:=\cg^{(h,L)}_m(u)$, then $\bar{u}_0=0$, $\bar{u}_{t_i}\in \cs^{(h,L)}$ and for all $i=1,\ldots,2^m$, $\vp\in \cs^{(h,L)}$, 
\begin{equation}\label{charac-g-h-l-m}
2^m \langle \bar{u}_{t_i}-\bar{u}_{t_{i-1}},\vp\rangle+\langle \nabla \bar{u}_{t_i},\nabla \vp\rangle=2^m\int_{t_{i-1}}^{t_i} ds \, \big[\langle (\partial_t u)_s,\vp\rangle+\langle \nabla u_s,\nabla \vp\rangle \big]\, .
\end{equation}
Besides, if $\bar{u}_{t_i}(x)=\sum_{j=-N+1}^{N-1} \bbu^j_{t_i}  \Phi_j(x)$, then $\bbu$ satisfies the relation
\begin{equation}\label{equation-bbu}
2^m\ca_{h,L} (\bbu_{t_{i}}-\bbu_{t_{i-1}})+\cb_{h,L} \bbu_{t_i}=2^m\int_{t_{i-1}}^{t_i} ds \, \langle (\partial_t u)_s-\Delta u_s,\Phi^h\rangle \, .
\end{equation}

To go further with the analysis of \eqref{charac-g-h-l-m}-\eqref{equation-bbu}, observe that if $\bv=(\bv^j)_{-N+1\leq j\leq N-1}$, then, setting $v:=\sum_{j=-N+1}^{N-1} \bv^j \Phi_j^h$, one has
\begin{equation}\label{correspond-a-b}
\|v\|_{L^2(\R)}^2=\langle \ca_{h,L}\bv , \bv\rangle_2  \quad \text{and} \quad \|\nabla v\|_{L^2(\R)}^2=\langle \cb_{h,L}\bv , \bv\rangle_2 \, ,
\end{equation}
where we set $\langle \bu,\bv\rangle_2:=\sum_{j=-N+1}^{N-1} \bu^j\bv^j $ for all vectors $\bu=(\bu^j)_{-N+1\leq j\leq N-1},\bv=(\bv^j)_{-N+1\leq j\leq N-1}$. This shows in particular that $\ca_{h,L}$ is a symmetric positive definite matrix, and accordingly it can be decomposed as $\ca_{h,L}=\ce_{h,L}^\ast \ce_{h,L}$, for some lower triangular matrix $\ce_{h,L}=\big( \big(\ce_{h,L}\big)_{jk}\big)_{j,k\in I^{(h,L)}}$ with positive diagonal entries. With this notation in hand, the following elementary representation result (which applies in particular to \eqref{charac-g-h-l-m}-\eqref{equation-bbu}) will prove to be useful in the sequel.

\begin{lemma}\label{lem:solu-expli}
Fix $(h,L)$ such that $N:=\frac{L}{h}\in \mathbb{N}$. Let $\theta_{t_i}\in \cs^{(h,L)}$ ($i=0,\ldots,2^m$) be such that $\theta_0=0$ and for all $i=1,\ldots,2^m$, $\vp\in \cs^{(h,L)}$,
\begin{equation}\label{equat-theta}
2^m\langle \theta_{t_i}-\theta_{t_{i-1}},\vp\rangle+\langle \nabla \theta_{t_i},\nabla \vp\rangle=2^m\langle f_i,\vp\rangle \, ,
\end{equation}
for some functions $f_i\in L^2_{loc}(\R)$.

\smallskip

Then, if $\theta_{t_i}(x)=\sum_{j=-N+1}^{N-1} \pmb{\theta}^j_{t_i} \Phi_j^h(x)$, the vector $\pmb{\theta}_{t_i}=(\pmb{\theta}^j_{t_i})_{-N+1\leq j\leq N-1}$ is explicitly given by 
\begin{equation}\label{repres-form-galer}
\pmb{\theta}_{t_i}=\ce_{h,L}^{-1}\sum_{k=1}^i \big[(I+2^{-m}\cm_{h,L})^{-1}\big]^{i-k+1}(\ce_{h,L}^\ast)^{-1} \langle f_k,\Phi^h \rangle \, ,
\end{equation}
where we have set
$$\cm_{h,L}:=(\ce_{h,L}^{-1})^\ast \cb_{h,L} \ce_{h,L}^{-1} \, .$$
\end{lemma}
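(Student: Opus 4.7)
The proof is essentially a linear-algebra manipulation of the discrete variational identity \eqref{equat-theta}, combined with a change of variable designed to symmetrize the recursion.

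First, I would insert the test functions $\vp=\Phi^h_k$ (for $k\in I^{(h,L)}$) into \eqref{equat-theta}. Using the definitions \eqref{defi-ca-cb} of the matrices $\ca_{h,L}$ and $\cb_{h,L}$, and the expansion $\theta_{t_i}=\sum_j \pmb{\theta}^j_{t_i}\Phi^h_j$, this immediately turns \eqref{equat-theta} into the finite-dimensional recursion
\begin{equation*}
2^m \ca_{h,L}\big(\pmb{\theta}_{t_i}-\pmb{\theta}_{t_{i-1}}\big)+\cb_{h,L}\pmb{\theta}_{t_i}=2^m \langle f_i,\Phi^h\rangle,\qquad \pmb{\theta}_{0}=0,
\end{equation*}
which, after dividing by $2^m$ and rearranging, reads
\begin{equation*}
\big(\ca_{h,L}+2^{-m}\cb_{h,L}\big)\pmb{\theta}_{t_i}=\ca_{h,L}\pmb{\theta}_{t_{i-1}}+\langle f_i,\Phi^h\rangle.
\end{equation*}

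The key step is then to use the Cholesky-type factorization $\ca_{h,L}=\ce_{h,L}^\ast \ce_{h,L}$ (which is available thanks to the positive-definiteness of $\ca_{h,L}$, itself a direct consequence of \eqref{correspond-a-b}) and to set $\pmb{\eta}_{t_i}:=\ce_{h,L}\,\pmb{\theta}_{t_i}$. Substituting $\pmb{\theta}_{t_i}=\ce_{h,L}^{-1}\pmb{\eta}_{t_i}$ in the previous identity and multiplying on the left by $(\ce_{h,L}^\ast)^{-1}$ yields, precisely with the notation $\cm_{h,L}=(\ce_{h,L}^{-1})^\ast\cb_{h,L}\ce_{h,L}^{-1}$ of the statement,
\begin{equation*}
\big(I+2^{-m}\cm_{h,L}\big)\pmb{\eta}_{t_i}=\pmb{\eta}_{t_{i-1}}+(\ce_{h,L}^\ast)^{-1}\langle f_i,\Phi^h\rangle.
\end{equation*}
At this stage I would observe that $\cm_{h,L}$ is symmetric positive semi-definite (again by \eqref{correspond-a-b}, through which $\langle \cm_{h,L}\bw,\bw\rangle_2=\|\nabla(\sum_j(\ce_{h,L}^{-1}\bw)^j\Phi^h_j)\|_{L^2}^2\ge 0$), so that $I+2^{-m}\cm_{h,L}$ has eigenvalues $\geq 1$ and is therefore invertible. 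This guarantees both the well-posedness of the scheme and the possibility of inverting the operator appearing on the left-hand side.

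It then only remains to iterate the resulting one-step recursion
\begin{equation*}
\pmb{\eta}_{t_i}=(I+2^{-m}\cm_{h,L})^{-1}\pmb{\eta}_{t_{i-1}}+(I+2^{-m}\cm_{h,L})^{-1}(\ce_{h,L}^\ast)^{-1}\langle f_i,\Phi^h\rangle,
\end{equation*}
starting from $\pmb{\eta}_0=0$, which by a straightforward induction gives
\begin{equation*}
\pmb{\eta}_{t_i}=\sum_{k=1}^{i}\big[(I+2^{-m}\cm_{h,L})^{-1}\big]^{i-k+1}(\ce_{h,L}^\ast)^{-1}\langle f_k,\Phi^h\rangle,
\end{equation*}
and finally recover $\pmb{\theta}_{t_i}=\ce_{h,L}^{-1}\pmb{\eta}_{t_i}$ in the exact form \eqref{repres-form-galer}. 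There is no genuine obstacle in the argument; the only non-automatic point is the choice of the symmetrizing change of variable $\pmb{\eta}=\ce_{h,L}\pmb{\theta}$, which is what allows the representation of $\pmb{\theta}_{t_i}$ as a clean geometric-type sum and, in turn, will be instrumental for the subsequent spectral estimates on $\cm_{h,L}$ used in Proposition \ref{prop:discreti-s-t}.
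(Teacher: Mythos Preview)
Your proof is correct and follows essentially the same approach as the paper: both rewrite \eqref{equat-theta} as the matrix recursion $(\ca_{h,L}+2^{-m}\cb_{h,L})\pmb{\theta}_{t_i}=\ca_{h,L}\pmb{\theta}_{t_{i-1}}+\langle f_i,\Phi^h\rangle$, use the factorization $\ca_{h,L}=\ce_{h,L}^\ast\ce_{h,L}$, and iterate from $\pmb{\theta}_0=0$. The only cosmetic difference is that you make the substitution $\pmb{\eta}_{t_i}=\ce_{h,L}\pmb{\theta}_{t_i}$ explicit before iterating, whereas the paper iterates directly on $\pmb{\theta}$ and factors out $\ce_{h,L}^{-1}$ at the end; your added remark on the invertibility of $I+2^{-m}\cm_{h,L}$ is correct (the paper establishes this separately in the next lemma).
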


\begin{proof}
Equation \eqref{equat-theta} can be recast as
$$2^m \ca_{h,L} (\pmb{\theta}_{t_i}-\pmb{\theta}_{t_{i-1}})+\cb_{h,L}\pmb{\theta}_{t_i}=2^m \langle f_i,\Phi^h \rangle \, ,$$
and so one has the recursive formula
\begin{align*}
\pmb{\theta}_{t_i}& =\big[(\ca_{h,L}+2^{-m}\cb_{h,L})^{-1}\ca_{h,L}\big] \pmb{\theta}_{t_{i-1}}+(\ca_{h,L}+2^{-m}\cb_{h,L})^{-1} \langle f_i,\Phi^h \rangle\\
&=\big[\ce_{h,L}^{-1}(I+2^{-m}\cm_{h,L})^{-1}\ce_{h,L}\big] \pmb{\theta}_{t_{i-1}}+\big[\ce_{h,L}^{-1}(I+2^{-m}\cm_{h,L})^{-1}(\ce_{h,L}^\ast)^{-1}\big]  \langle f_i,\Phi^h \rangle \, .
\end{align*}
Since $\pmb{\theta}_0=0$, we get that
$$\pmb{\theta}_{t_i} =\sum_{k=1}^i \big[\ce_{h,L}^{-1}(I+2^{-m}\cm_{h,L})^{-1}\ce_{h,L}\big]^{i-k} \big[\ce_{h,L}^{-1}(I+2^{-m}\cm_{h,L})^{-1}(\ce_{h,L}^\ast)^{-1}\big]  \langle f_k,\Phi^h \rangle$$
and formula \eqref{repres-form-galer} immediately follows.
\end{proof}

\

With the above considerations in mind, and as a preliminary technical step towards Propositions \ref{prop:galerkin-std-s-t} and \ref{prop:gene-galer-s-t}, let us now collect some basic information related to the matrices at the core of this Galerkin approximation procedure. To fix notation, we set, for all vectors $\bu=(\bu^j)_{-N+1\leq j\leq N-1},\bv=(\bv^j)_{-N+1\leq j\leq N-1}$, and for every matrix $A=(A_{jk})_{-N+1\leq j,k\leq N-1}$,
$$\langle \bu,\bv\rangle_2:=\sum_{j=-N+1}^{N-1} \bu^j\bv^j \, , \quad \|\bv\|_2^2:=\langle \bv,\bv\rangle_2 \, , \quad \|A\|_{2;2}:=\sup_{\bv \neq 0} \frac{\|A\bv\|_2}{\|\bv\|_2} \, .$$
We recall the identity $\|A\|_{2;2}^2=\la_{\max}(A A^\ast)$, where $\la_{\max}(M)$ refers to the largest eigenvalue of $M$. 

\smallskip

\begin{lemma}\label{lem:encadr-h}
Fix $(h,L)$ be such that $N:=\frac{L}{h}\in \mathbb{N}$. Let $(\bv^j)_{i=-N+1,\ldots N-1}$ be a real-valued vector and set $v(x):=\sum_{j=-N+1}^{N-1} \bv^j \, \Phi^h_j(x)$ for every $x\in \R$. Then it holds that
\begin{equation}\label{encadr-h}
\frac{h}{3} \|\bv\|^2_2 \leq \|v\|^2_{L^2(\R)} \leq h \|\bv\|^2_2 
\end{equation} 
%and
%\begin{equation}\label{encadr-h-grad}
%\|\nabla v\|^2_{L^2(\R)} \leq \frac{1}{h}\|\bv\|^2_2 \, .
%\end{equation}
In particular, one has $\text{Sp}(\ca_{h,L})\subset \big[\frac{h}{3},h\big]$ and %, $\text{Sp}(\cb_{h,L})\subset \big[0,\frac{1}{h}\big]$ and 
\begin{equation}\label{bou-e}
\big\| \ce_{h,L}^{-1}\big\|_{2;2}^2 \leq \frac{3}{h}\, .
\end{equation}
\end{lemma}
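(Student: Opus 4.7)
The plan is to reduce everything to a direct computation on each subinterval $[x_j, x_{j+1}]$, since the functions $v$ under consideration are piecewise linear with nodal values $v(x_j) = \bv^j$ (using the convention $\bv^{-N} = \bv^{N} = 0$, enforced by the fact that $v$ is supported in $\Omega_L$). On each such interval, $v$ interpolates linearly between $\bv^j$ and $\bv^{j+1}$, so a straightforward integration gives the classical hat-function formula
\begin{equation*}
\int_{x_j}^{x_{j+1}} v(x)^2 \, dx = \frac{h}{3}\Big[(\bv^j)^2 + \bv^j \bv^{j+1} + (\bv^{j+1})^2\Big],
\end{equation*}
which upon summation yields an exact expression for $\|v\|_{L^2(\R)}^2$.

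Then, I would invoke the elementary algebraic identity
\begin{equation*}
a^2 + ab + b^2 = \tfrac{1}{2}\big[(a+b)^2 + a^2 + b^2\big],
\end{equation*}
which separates a nonnegative term from a purely diagonal part. Summing over $j$ and using that each interior index $j \in I^{(h,L)}$ contributes twice to $\sum_j[(\bv^j)^2 + (\bv^{j+1})^2]$ (while the boundary indices $\pm N$ contribute zero), the diagonal part amounts exactly to $\frac{h}{3}\|\bv\|_2^2$. The lower bound $\|v\|_{L^2}^2 \geq \frac{h}{3}\|\bv\|_2^2$ follows by dropping the nonnegative sum of the $(\bv^j + \bv^{j+1})^2$ terms, and the upper bound $\|v\|_{L^2}^2 \leq h\|\bv\|_2^2$ follows from the elementary inequality $(a+b)^2 \leq 2(a^2+b^2)$ applied to each of those same terms.

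To obtain the spectral consequences, I would use the identification $\|v\|_{L^2(\R)}^2 = \langle \ca_{h,L} \bv, \bv\rangle_2$ established in \eqref{correspond-a-b}. Since $\ca_{h,L}$ is symmetric, \eqref{encadr-h} translates via the Rayleigh quotient into $\mathrm{Sp}(\ca_{h,L}) \subset [h/3, h]$. Finally, the Cholesky decomposition $\ca_{h,L} = \ce_{h,L}^\ast \ce_{h,L}$ gives $\ca_{h,L}^{-1} = \ce_{h,L}^{-1}(\ce_{h,L}^{-1})^\ast$, and thus $\|\ce_{h,L}^{-1}\|_{2;2}^2 = \|\ca_{h,L}^{-1}\|_{2;2} = 1/\lambda_{\min}(\ca_{h,L}) \leq 3/h$, which is \eqref{bou-e}.

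There is no real obstacle here: the whole argument is a one-page exercise in finite-element bookkeeping. The only point requiring a bit of care is the handling of the boundary contributions (the vanishing of $v$ at $\pm L$ is what ensures that each interior nodal value is counted exactly twice in the sum, which in turn is what produces the sharp constants $1/3$ and $1$).
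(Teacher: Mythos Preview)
Your proof is correct and follows essentially the same approach as the paper: both compute the quadratic form $\|v\|_{L^2}^2$ explicitly and bound the cross terms by elementary inequalities, then derive the spectral consequences via \eqref{correspond-a-b} and the Cholesky factorization exactly as you do. The only cosmetic difference is that the paper organizes the computation by basis-function inner products $\langle \Phi^h_j,\Phi^h_k\rangle=\tfrac{2h}{3}\big[\1_{\{j=k\}}+\tfrac14\1_{\{|j-k|=1\}}\big]$ and then bounds $|\bv^j\bv^{j-1}+\bv^j\bv^{j+1}|$ directly, whereas your interval-by-interval integration together with the identity $a^2+ab+b^2=\tfrac12[(a+b)^2+a^2+b^2]$ makes the lower bound slightly more transparent (it falls out by dropping a nonnegative sum).
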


\begin{proof}
It is readily checked that
$$\langle \Phi^h_j,\Phi^h_k\rangle=\frac23 h \Big[ \1_{\{j=k\}} +\frac14\1_{\{|j-k|=1\}} \Big]$$
and accordingly
\begin{align*}
\|v\|^2&=\sum_{j,k=-N+1}^{N-1}\bv^j \bv^k \langle \Phi^h_j,\Phi^h_k\rangle\\
&=\frac23 h\bigg\{\|\bv\|_2^2+\frac14\bigg[ \sum_{j=-N+2}^{N-2}\Big[ \bv^j \bv^{j-1}+ \bv^j \bv^{j+1}\Big]+ \Big[  \bv^{-N+1} \bv^{-N+2}+ \bv^{N-2} \bv^{N-1}\Big]\bigg]\bigg\} \, .
\end{align*}
Combining the elementary bounds
$$ \big|\bv^j \bv^{j-1}+ \bv^j \bv^{j+1}\big| \leq \frac12 \Big[ 2(\bv^j)^2+(\bv^{j-1})^2 +(\bv^{j+1})^2\Big]$$
and
$$\big|\bv^{-N+1} \bv^{-N+2}+ \bv^{N-2} \bv^{N-1}\big| \leq \frac12 \Big[ (\bv^{-N+1})^2+(\bv^{-N+2})^2 +(\bv^{N-2})^2+(\bv^{N-1})^2\Big] \, ,$$
we get that
$$-\frac12\|v\|_2^2\leq \frac14\bigg[ \sum_{j=-N+2}^{N-2}\Big[ \bv^j \bv^{j-1}+ \bv^j \bv^{j+1}\Big]+ \Big[  \bv^{-N+1} \bv^{-N+2}+ \bv^{N-2} \bv^{N-1}\Big]\bigg] \leq \frac12\|v\|_2^2 \, ,$$
and  the controls in \eqref{encadr-h} immediately follow.

\smallskip

%In the same way, one can check first that
%$$\langle \nabla\Phi^h_j,\nabla\Phi^h_k\rangle=\frac{2}{h} \Big[ \1_{\{j=k\}} -\frac12\1_{\{|j-k|=1\}} \Big]$$
%and so
%\begin{align*}
%&\|\nabla v\|^2=\sum_{j,k=-N+1}^{N-1}\bv^j \bv^k \langle \nabla\Phi^h_j,\nabla\Phi^h_k\rangle\\
%&=\frac{2}{h}\bigg\{\|\bv\|_2^2-\frac12\bigg[ \sum_{j=-N+2}^{N-2}\Big[ \bv^j \bv^{j-1}+ \bv^j \bv^{j+1}\Big]+ \Big[  \bv^{-N+1} \bv^{-N+2}+ \bv^{N-2} \bv^{N-1}\Big]\bigg]\bigg\}
%\end{align*}

%\

Once endowed with \eqref{encadr-h}, the inclusion $\text{Sp}(\ca_{h,L})\subset \big[\frac{h}{3},h\big]$ becomes a straightforward consequence of the first identity in \eqref{correspond-a-b}. In particular, $\la_{\min}(\ca_{h,L})\geq \frac{h}{3}$, and accordingly
$$\big\| \ce_{h,L}^{-1}\big\|_{2;2}^2=\la_{\max}\big( (\ce_{h,L}^\ast\ce_{h,L})^{-1}\big)=\la_{\max}(\ca_{h,L}^{-1})=\frac{1}{\la_{\min}(\ca_{h,L})}\leq \frac{3}{h}\, .$$
\end{proof}

\begin{lemma}\label{lem:bounds-galerkin-discr}
For all $h,L>0$ such that $N:=\frac{L}{h}\in \mathbb{N}$, the matrix $\cm_{h,L}$ is positive. As a consequence, for every $h>0$, it holds that 
\begin{equation}\label{bou-exp-discr}
\sup_{L>0:\, \frac{L}{h}\in \N }\sup_{\ell \geq 0} \, \big\|  \big[(I+2^{-m}\cm_{h,L})^{-1}\big]^{\ell}\big\|_{2;2} \leq 1 \, .
\end{equation}
Moreover, for all vectors $\bv_{t_1},\ldots, \bv_{t_{2^m}}$, for all $i=1,\ldots,2^m$ and $\varepsilon >0$, one has
\begin{equation}\label{bou-deriv-exp-discr}
\sup_{L>0:\, \frac{L}{h}\in \N } \,\bigg\| \sum_{k=1}^{i-1} \Big[\big[(I+2^{-m}\cm_{h,L})^{-1}\big]^{k}-\big[(I+2^{-m}\cm_{h,L})^{-1}\big]^{k+1}\Big]\bv_{t_{i-k}} \bigg\|_2\lesssim 2^{m\varepsilon}\sup_{k=1,\ldots,2^m} \|\bv_{t_{k}}\|_2\, ,
\end{equation}
where the proportional constant does not depend on $h$ and $i$.
\end{lemma}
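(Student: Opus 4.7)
My plan is to first establish that $\cm_{h,L}$ is symmetric positive definite, and then to deduce both operator-norm estimates by diagonalizing $\cm_{h,L}$ and performing elementary scalar computations on its eigenvalues.

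For the symmetry and positivity, I would simply observe that $\cm_{h,L} = (\ce_{h,L}^{-1})^\ast \cb_{h,L}\ce_{h,L}^{-1}$ is symmetric since $\cb_{h,L}$ is. Then, for any $\bv \neq 0$, setting $\bw := \ce_{h,L}^{-1}\bv \neq 0$ and invoking the second identity in~\eqref{correspond-a-b},
\[
\langle \cm_{h,L}\bv,\bv\rangle_2 = \langle \cb_{h,L}\bw,\bw\rangle_2 = \|\nabla w\|_{L^2(\R)}^2,
\]
where $w := \sum_j \bw^j \Phi^h_j \in \cs^{(h,L)}$ is compactly supported; since $\|\nabla w\|_{L^2} = 0$ would force $w\equiv 0$ (and hence $\bw = 0$), the quadratic form is strictly positive, so all eigenvalues $\mu_1,\ldots,\mu_{2N-1}$ of $\cm_{h,L}$ are strictly positive.

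Once this is granted, I would orthogonally diagonalize $\cm_{h,L} = U\,\mathrm{diag}(\mu_j)\,U^\ast$ and set $q_j := (1+2^{-m}\mu_j)^{-1} \in (0,1)$. The bound~\eqref{bou-exp-discr} then reduces to the trivial observation $\|[(I+2^{-m}\cm_{h,L})^{-1}]^\ell\|_{2;2} = \max_j q_j^\ell \leq 1$, uniformly in $L$, $\ell$ (and in fact in $h$). For~\eqref{bou-deriv-exp-discr}, the key algebraic step is the factorization
\[
A^k - A^{k+1} = A^k (I-A), \qquad A := (I+2^{-m}\cm_{h,L})^{-1},
\]
which, via the same spectral decomposition and the fact that $A$ and $I-A$ commute and are symmetric, yields
\[
\|A^k-A^{k+1}\|_{2;2} = \max_j q_j^k(1-q_j) \leq \max_{x\in [0,1]} x^k(1-x) \leq \frac{1}{k+1}.
\]
The triangle inequality then produces $\sum_{k=1}^{i-1} \frac{1}{k+1} \cdot \sup_k \|\bv_{t_k}\|_2 \lesssim \log(i)\sup_k\|\bv_{t_k}\|_2$, and since $i \leq 2^m$ one has $\log(i) \leq m\log 2 \leq C_\varepsilon\, 2^{m\varepsilon}$ for any $\varepsilon > 0$, giving~\eqref{bou-deriv-exp-discr}.

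The only subtle point — and the main obstacle I anticipate — lies in this last spectral estimate. A naive triangle inequality using only $\|A^\ell\|_{2;2}\leq 1$ applied to each of the $\sim 2^m$ telescoped terms would produce a useless factor of order $2^m$; a brutal bound of the form $\|A^k(I-A)\|_{2;2}\leq 2^{-m}\|\cm_{h,L}\|_{2;2}$ would introduce an unwanted $h$-dependence through the $h^{-2}$ scaling of $\|\cm_{h,L}\|_{2;2}$ and thus break the $h$-uniformity required by the statement. The clean way out is to exploit the fact that the scalar function $x^k(1-x)$ peaks at $x=k/(k+1)$ with value bounded by $1/(k+1)$ — a decay which is both $h$-independent and integrably summable in $k$ up to a logarithmic factor, precisely what is needed to absorb the loss into the arbitrarily small $2^{m\varepsilon}$.
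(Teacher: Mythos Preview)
Your proof is correct. The first two parts (positivity of $\cm_{h,L}$ and the bound \eqref{bou-exp-discr}) are essentially identical to the paper's argument, though you observe the slightly stronger fact that the eigenvalues are strictly positive, which is harmless.

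For the second estimate \eqref{bou-deriv-exp-discr}, your route differs from the paper's and is in fact simpler. The paper expands the full sum in the orthonormal eigenbasis $(\mathbf{e}_j)$ of $\cm_{h,L}$, computes $\|\cdot\|_2^2$ via Parseval, and then applies a weighted Cauchy--Schwarz in the $k$-variable with weights $k^{1+2\varepsilon}$, combined with the identity $\sum_{k\geq 1} k\, q_j^k \leq (1-q_j)^{-2}$, to cancel the factor $(1-q_j)^2$. Your argument instead bounds each summand in operator norm via the scalar inequality $\max_{x\in[0,1]} x^k(1-x) \leq 1/(k+1)$ and then sums the harmonic series. This is more direct, avoids the Cauchy--Schwarz step entirely, and actually yields a sharper logarithmic bound $\lesssim m \cdot \sup_k\|\bv_{t_k}\|_2$ before you absorb it into $2^{m\varepsilon}$. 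The paper's approach, by contrast, retains the full eigenvector decomposition of the vectors $\bv_{t_k}$ throughout, which could be useful if one wanted to exploit orthogonality of the $\bv_{t_k}$ rather than just their sup-norm; but for the statement as written, your argument is cleaner and loses nothing.
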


\begin{proof}

Let $\la\in \R$ and $\bv \neq 0$ such that $\cm_{h,L} \bv =\la \bv$. Setting $\bw:=\ce_{h,L}^{-1}\bv$, this relation can also be read as $\cb_{h,L} \bw=\la \, \ca_{h,L} \bw$. Now define the function $w:=\sum_{i=-N+1}^{N-1} \bw^i \, \Phi^h_i$, and observe that, by \eqref{correspond-a-b},
$$0\leq \|\nabla w\|_{L^2(\R)}^2=\langle \bw,\cb_{h,L} \bw\rangle_2=\la\, \langle \bw , \ca_{h,L} \bw\rangle_2=\la \|w\|_{L^2(\R)}^2,$$
and hence $\la \geq 0$, which proves that $\cm_{h,L}$ is positive.

\smallskip

As a direct consequence of this positivity, there exists an orthonormal basis of eigenvectors $(\mathbf{e}_j)_{-N+1\leq j\leq N-1}$ associated with eigenvalues $\mu_i\geq 0$ of the matrix. One can thus write, for every vector $\bv=(\bv^j)_{-N+1\leq j\leq N-1}$,
$$ \big[(I+2^{-m}\cm_{h,L})^{-1}\big]^{\ell} \bv=\sum_{j=-N+1}^{N-1}  \bigg[\frac{1}{1+2^{-m}\mu_j}\bigg]^\ell \langle \bv,\mathbf{e}_j\rangle_2 \, \mathbf{e}_j  \, ,$$
and so 
$$\Big\|\big[(I+2^{-m}\cm_{h,L})^{-1}\big]^{\ell} \bv\Big\|_2^2=\sum_{j=-N+1}^{N-1}  \bigg|\frac{1}{1+2^{-m}\mu_j}\bigg|^{2\ell}\big|\langle\bv,\mathbf{e}_j\rangle_2\big|^2  \leq \sum_{j=-N+1}^{N-1} \big|\langle \bv,\mathbf{e}_j\rangle_2\big|^2=\|\bv\|_2^2 \, .$$
In the same way, 
\begin{align*}
&\sum_{k=1}^{i-1} \Big[\big[(I+2^{-m}\cm_{h,L})^{-1}\big]^{k}-\big[(I+2^{-m}\cm_{h,L})^{-1}\big]^{k+1}\Big]\bv_{t_{i-k}} \\
&=\sum_{k=1}^{i-1} \sum_{j=-N+1}^{N-1}  \bigg[\bigg(\frac{1}{1+2^{-m}\mu_j}\bigg)^k-\bigg(\frac{1}{1+2^{-m}\mu_j}\bigg)^{k+1}\bigg]\langle \bv_{t_{i-k}},\mathbf{e}_j\rangle_2\, \mathbf{e}_j\\
&=\sum_{j=-N+1}^{N-1}\bigg[\bigg(1-\frac{1}{1+2^{-m}\mu_j}\bigg) \sum_{k=1}^{i-1}  \bigg(\frac{1}{1+2^{-m}\mu_j}\bigg)^k \langle \bv_{t_{i-k}},\mathbf{e}_j\rangle_2\bigg] \mathbf{e}_j  \, ,
\end{align*}
and thus
\begin{align*}
&\bigg\| \sum_{k=1}^{i-1} \Big[\big[(I+2^{-m}\cm_{h,L})^{-1}\big]^{k}-\big[(I+2^{-m}\cm_{h,L})^{-1}\big]^{k+1}\Big]\bv_{t_{i-k}} \bigg\|_2^2\\
&=\sum_{j=-N+1}^{N-1} \bigg|1-\frac{1}{1+2^{-m}\mu_j}\bigg|^2 \bigg|\sum_{k=1}^{i-1}  \bigg(\frac{1}{1+2^{-m}\mu_j}\bigg)^k\langle\bv_{t_{i-k}},\mathbf{e}_j\rangle_2\bigg|^2\\
&\leq \sum_{j=-N+1}^{N-1} \bigg|1-\frac{1}{1+2^{-m}\mu_j}\bigg|^2 \bigg(\sum_{k=1}^{i-1} k^{1+2\varepsilon} \bigg(\frac{1}{1+2^{-m}\mu_j}\bigg)^{2k}\bigg)\bigg(\sum_{\ell=1}^{i-1}\frac{1}{\ell^{1+2\varepsilon}}  \big|\langle \bv_{t_{i-\ell}},\mathbf{e}_j\rangle_2\big|^2 \bigg) \, .
\end{align*}
At this point, observe that for every $j$,
\begin{align*}
&\sum_{k=1}^{i-1} k^{1+2\varepsilon} \bigg(\frac{1}{1+2^{-m}\mu_j}\bigg)^{2k}\leq 2^{2m\varepsilon}\sum_{k=1}^{\infty} k \bigg(\frac{1}{1+2^{-m}\mu_j}\bigg)^{k} \leq 2^{2m\varepsilon}\bigg|1-\frac{1}{1+2^{-m}\mu_j}\bigg|^{-2}\, ,
\end{align*}
and so we get that
\begin{align*}
&\bigg\| \sum_{k=1}^{i-1} \Big[\big[(I+2^{-m}\cm_{h,L})^{-1}\big]^{k}-\big[(I+2^{-m}\cm_{h,L})^{-1}\big]^{k+1}\Big]\bv_{t_{i-k}} \bigg\|_2^2\\
&\leq 2^{2m\varepsilon}\sum_{j=-N+1}^{N-1} \bigg(\sum_{\ell=1}^{i-1}\frac{1}{\ell^{1+2\varepsilon}}  \big|\langle \bv_{t_{i-\ell}},\mathbf{e}_j\rangle_2\big|^2 \bigg) \leq 2^{2m\varepsilon}\sum_{\ell=1}^{i-1}\frac{1}{\ell^{1+2\varepsilon}}  \|\bv_{t_{i-\ell}}\|_2^2 \lesssim 2^{2m\varepsilon} \Big(\sup_{k=1,\ldots,2^m} \|\bv_{t_{k}}\|_2\Big)^2 \, ,
\end{align*}
which corresponds to the desired estimate \eqref{bou-deriv-exp-discr}.

\

\end{proof}

\subsection{The case of vanishing functions on the boundary}

We first focus on the situation where the function $u$ under consideration vanishes on the boundary $\{-L,L\}$ of $\Omega_L$. In this case, a possible estimate for the Galerkin approximation goes as follows.

\begin{proposition}\label{prop:galerkin-std-s-t}
For all $h,L>0$ such that $N:=\frac{L}{h}\in \mathbb{N}$, all $u\in H^{(1,2)}([0,1]\times \Omega_L) \cap L^\infty ([0,1];H_0^1(\Omega_L)\cap H^2(\Omega_L))$ such that $u_0=0$, all $m\geq 1$ and $\varepsilon >0$, one has
\begin{equation}\label{estim:galerkin-std-s-t}
\sup_{i=1,\ldots,2^m} \big\|u_{t_i}-\cg^{(h,L)}_m(u)_{t_i} \big\|_{L^2(\Omega_L)} \lesssim 2^{-m(1-\varepsilon)} \sup_{t\in [0,1]} \|(\partial_t u)_t\|_{L^2(\Omega_L)}+2^{m\varepsilon}h^2 \sup_{t\in [0,1]}\|\Delta u_t\|_{L^2(\Omega_L)} \, ,
\end{equation}
where the proportional constant does not depend on $h$, $L$ and $m$.
\end{proposition}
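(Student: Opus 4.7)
The plan is to follow the standard Thom\'ee-style strategy for parabolic finite-element analysis: decompose the error into an elliptic-projection part and a purely discrete part, and use the operator-difference estimate \eqref{bou-deriv-exp-discr} twice to extract only a fractional $2^{m\varepsilon}$-loss in both terms of \eqref{estim:galerkin-std-s-t}. I would first introduce the Ritz projection $R_h\colon H_0^1(\Omega_L)\to \cs^{(h,L)}$, defined by $\langle \nabla R_h v,\nabla\vp\rangle=\langle \nabla v,\nabla\vp\rangle$ for every $\vp\in\cs^{(h,L)}$, and decompose $u_{t_i}-\bar u_{t_i}=\rho_{t_i}+\theta_{t_i}$, where $\rho_{t_i}:=u_{t_i}-R_h u_{t_i}$ and $\theta_{t_i}:=R_h u_{t_i}-\bar u_{t_i}\in\cs^{(h,L)}$. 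The projection error is handled by the Aubin--Nitsche bound $\|\rho_{t_i}\|_{L^2(\Omega_L)}\lesssim h^2\|\Delta u_{t_i}\|_{L^2}$, which already accounts for part of the right-hand side of \eqref{estim:galerkin-std-s-t}.

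I would then subtract the Galerkin relation \eqref{charac-g-h-l-m} from the trivial identity $2^m\langle u_{t_i}-u_{t_{i-1}},\vp\rangle=2^m\int_{t_{i-1}}^{t_i}\langle(\partial_t u)_s,\vp\rangle\,ds$, integrate by parts in space (legitimate since $u_s\in H_0^1(\Omega_L)\cap H^2(\Omega_L)$ and every $\vp\in\cs^{(h,L)}$ vanishes at $\pm L$), and use the Ritz identity $\langle \nabla\rho_{t_i},\nabla\vp\rangle=0$ to reach
\begin{equation*}
2^m\langle\theta_{t_i}-\theta_{t_{i-1}},\vp\rangle+\langle\nabla\theta_{t_i},\nabla\vp\rangle=2^m\langle\Delta V_i,\vp\rangle-2^m\langle\rho_{t_i}-\rho_{t_{i-1}},\vp\rangle,
\end{equation*}
where $V_i:=\int_{t_{i-1}}^{t_i}(u_s-u_{t_i})\,ds$. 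A one-line integration by parts in $s$ identifies $V_i=-\int_{t_{i-1}}^{t_i}(s-t_{i-1})(\partial_t u)_s\,ds$, whence $\|V_i\|_{L^2(\Omega_L)}\lesssim 2^{-2m}\sup_t\|(\partial_t u)_t\|_{L^2}$. Lemma~\ref{lem:solu-expli} then gives a representation of $\pmb\theta_{t_i}$ splitting naturally as $\pmb\theta^{(a)}_{t_i}+\pmb\theta^{(b)}_{t_i}$ along the two source terms. Throughout, I write $T_\ell:=[(I+2^{-m}\cm_{h,L})^{-1}]^\ell$.

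The $\rho$-piece $\pmb\theta^{(b)}_{t_i}$ is treated by Abel summation in the index $k$: the boundary term at $k=0$ vanishes because $\rho_0=0$, and the remaining sum becomes $\sum_{\ell=1}^{i-1}(T_\ell-T_{\ell+1})(\ce_{h,L}^\ast)^{-1}\langle\rho_{t_{i-\ell}},\Phi^h\rangle$ plus a harmless $T_1$-boundary term. Applying \eqref{bou-deriv-exp-discr} with $\bv_{t_k}=(\ce_{h,L}^\ast)^{-1}\langle\rho_{t_k},\Phi^h\rangle$, together with $\|\ce_{h,L}^{-1}\|_{2;2}^2\leq 3/h$ from Lemma~\ref{lem:encadr-h}, the elementary bound $\|\langle\rho_{t_k},\Phi^h\rangle\|_2\lesssim\sqrt h\,\|\rho_{t_k}\|_{L^2}$, and the Aubin--Nitsche estimate on $\rho_{t_k}$, one gets $\|\theta^{(b)}_{t_i}\|_{L^2}\lesssim 2^{m\varepsilon}h^2\sup_t\|\Delta u_t\|_{L^2}$.

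The hard piece is $\pmb\theta^{(a)}_{t_i}$, and this is where the main obstacle of the proof lies, since the source $\Delta V_k$ is only controlled in a weak norm. My idea is to combine the algebraic identity $T_{i-k+1}\cm_{h,L}=2^m(T_{i-k}-T_{i-k+1})$ (which follows from $T_1\cm_{h,L}=2^m(I-T_1)$ and the commutation of $T_1$ with $\cm_{h,L}$) with the representation $\langle\Delta V_k,\Phi^h\rangle=-\cb_{h,L}\ca_{h,L}^{-1}\langle R_hV_k,\Phi^h\rangle$, which re-expresses the Laplacian through the Ritz-coordinates of $V_k$. Together these recast $\pmb\theta^{(a)}_{t_i}$ as $-2^m\ce_{h,L}^{-1}\sum_{\ell=1}^i(T_{\ell-1}-T_\ell)(\ce_{h,L}^\ast)^{-1}\langle R_hV_{i-\ell+1},\Phi^h\rangle$; after isolating the $\ell=1$ term (handled by $\|I-T_1\|_{2;2}\leq 2$), the remaining sum fits exactly the template of \eqref{bou-deriv-exp-discr} and produces a $2^{m\varepsilon}$-loss, while the extra $2^m$-factor is absorbed by the smallness $\|R_hV_k\|_{L^2}\leq \|V_k\|_{L^2}+Ch^2\|\Delta V_k\|_{L^2}\lesssim 2^{-2m}\sup\|\partial_t u\|+h^2\cdot 2^{-m}\sup\|\Delta u\|$. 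This yields $\|\theta^{(a)}_{t_i}\|_{L^2}\lesssim 2^{-m(1-\varepsilon)}\sup\|\partial_t u\|+2^{m\varepsilon}h^2\sup\|\Delta u\|$, and summing the three contributions gives \eqref{estim:galerkin-std-s-t}.
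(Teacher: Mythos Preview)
Your proof is correct and reaches the same estimate, but the decomposition differs from the paper's. The paper introduces a \emph{time-averaged} Ritz projection $\mathscr{P}^{(h,L)}_m(u)_{t_i}:=2^m\int_{t_{i-1}}^{t_i}\cp^{(h,L)}(u_s)\,ds$ (Lemma~\ref{lem:p-rond}) and splits $u_{t_i}-\cg^{(h,L)}_m(u)_{t_i}=\ga_{t_i}+\theta_{t_i}$ accordingly. The point of the time-averaging is that identity~\eqref{identity-mscrp} absorbs the term $2^m\int_{t_{i-1}}^{t_i}\langle\nabla u_s,\nabla\vp\rangle\,ds$ exactly, so the discrete error equation collapses to the single source $-2^m\langle\ga_{t_i}-\ga_{t_{i-1}},\vp\rangle$; one Abel summation then suffices, and~\eqref{bou-deriv-exp-discr} is invoked only once. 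By contrast, your instantaneous projection $R_h u_{t_i}$ produces the extra time-truncation source $\Delta V_i$, and you deal with it through the algebraic identity $T_\ell\cm_{h,L}=2^m(T_{\ell-1}-T_\ell)$ together with the Ritz-coordinate rewriting $\langle\Delta V_k,\Phi^h\rangle=-\cb_{h,L}\ca_{h,L}^{-1}\langle R_hV_k,\Phi^h\rangle$, which is a legitimate and neat trick not used in the paper. Both routes ultimately appeal to~\eqref{bou-deriv-exp-discr} and Lemma~\ref{lem:encadr-h} in the same way to get the $2^{m\varepsilon}$ loss with constants independent of $h,L,m$; the paper's version is slightly shorter (one source term, one summation-by-parts), while yours stays closer to the classical Thom\'ee template and avoids introducing the auxiliary operator $\mathscr{P}^{(h,L)}_m$.
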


Although the above proposition can more or less be considered as a standard result in PDE approximation theory (see for instance \cite[Theorem 8.2]{johnson}), we prefer to provide the details behind the estimate \eqref{estim:galerkin-std-s-t}, for the reader's convenience first, and also to insist on the independence of the proportional constant with respect to $L$ (such a property is not exactly clear from the statement of \cite[Theorem 8.2]{johnson}).
 
\

Let us set the stage for the proof by introducing some additional material and technical lemmas. For every $w\in H_0^1(\Omega_L)$, we define $\cp^{(h,L)}(w)$ as the orthogonal projection of $w$ on $\cs^{(h,L)}$ with respect to the product
$$\llangle w_1,w_2\rrangle:=\langle \nabla w_1,\nabla w_2\rangle\, .$$
We denote by $\vvvert.\vvvert$ the seminorm related to this product, that is $\vvvert w \vvvert^2:=\llangle w,w\rrangle$. 

\begin{lemma}\label{lem:cq-h-l}
Fix $h,L>0$ such that $N:=\frac{L}{h}\in \mathbb{N}$, and let $\cq^{(h,L)}: L^2(\Omega_L) \to \cs^{(h,L)}$ be the operator defined for every $w\in L^2(\Omega_L)$ by 
$$\cq^{(h,L)}(w):=\sum_{j=-N+1}^{N-1} w(x_j) \, \Phi_j^h \, .$$
Then, for every $w\in H_0^1(\Omega_L)\cap H^2(\Omega_L)$, it holds that
\begin{equation}\label{estim-cq-h-l}
\vvvert w-\cq^{(h,L)}(w) \vvvert \lesssim h \, \|\Delta w\|_{L^2(\Omega_L)} \, ,
\end{equation}
where the proportional constant does not depend on $h$ and $L$.
\end{lemma}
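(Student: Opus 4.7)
The plan is to exploit the fact that $\cq^{(h,L)}(w)$ is the standard piecewise-linear nodal interpolant of $w$ at the grid points $x_j=jh$, and to reduce the global bound to a collection of independent local estimates on each subinterval. Since $w\in H_0^1(\Omega_L)$, one has $w(\pm L)=0$, while the hat functions $\Phi_j^h$ for $j=-N+1,\ldots,N-1$ all vanish at $\pm L$. Hence $\cq^{(h,L)}(w)(\pm L)=0$ too, and, by the interpolation property, the error $e:=w-\cq^{(h,L)}(w)$ vanishes at \emph{every} node $x_j$ for $j=-N,\ldots,N$, not merely at the interior ones.

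Fix then an arbitrary subinterval $I_j:=[x_{j-1},x_j]$ of length $h$. The crucial elementary observation is that the vanishing of $e$ at the two endpoints of $I_j$ forces $e'$ to have zero average on $I_j$:
$$\frac{1}{h}\int_{I_j} e'(x)\,dx=\frac{e(x_j)-e(x_{j-1})}{h}=0.$$
I would then invoke the Poincaré-Wirtinger inequality on the interval $I_j$, which yields the local bound $\|e'\|_{L^2(I_j)}\leq \tfrac{h}{\pi}\|e''\|_{L^2(I_j)}$, with a universal constant that depends only on the length $h$ of the interval, and not on its location. Since $\cq^{(h,L)}(w)$ is affine on the interior of $I_j$, its second derivative vanishes there in the distributional sense, so that $e''=w''$ on $I_j$; in the one-dimensional setting this is exactly $\Delta w$. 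Combining these gives
$$\|e'\|_{L^2(I_j)}\lesssim h\,\|\Delta w\|_{L^2(I_j)}.$$

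It remains to square and sum over $j=-N+1,\ldots,N$ (which form a partition of $\Omega_L$ into intervals of equal length), which directly leads to
$$\vvvert e\vvvert^2=\sum_j \|e'\|_{L^2(I_j)}^2\lesssim h^2\sum_j \|\Delta w\|_{L^2(I_j)}^2=h^2\,\|\Delta w\|_{L^2(\Omega_L)}^2,$$
the desired estimate \eqref{estim-cq-h-l}. The argument is essentially classical and the only point to watch is the locality of the Poincaré-Wirtinger constant: since the interval-wise bound depends only on the mesh size $h$ and not on the number $2N$ of subintervals, the global constant is genuinely independent of $L$, as required by the statement. There is no substantial obstacle; the manipulation of $H^2$-regularity in the application of the Poincaré-Wirtinger inequality is justified by $w\in H^2(\Omega_L)$ together with the piecewise smoothness of $\cq^{(h,L)}(w)$.
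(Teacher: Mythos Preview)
Your proof is correct and follows essentially the same route as the paper: both localize to each subinterval $[x_{j-1},x_j]$, use $w\in H_0^1(\Omega_L)$ to ensure the interpolation error vanishes at \emph{all} nodes (including $\pm L$), and then control $\|e'\|_{L^2(I_j)}$ by $h\,\|w''\|_{L^2(I_j)}$ before summing. The only cosmetic difference is that you invoke the Poincar\'e--Wirtinger inequality by name, whereas the paper carries out the equivalent estimate by hand via the fundamental theorem of calculus and Cauchy--Schwarz.
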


\begin{proof}
Going back to the definition of the functions $\Phi_j^h$, we get that for all $-N+1 \leq k\leq N-2$ and $x\in [x_k,x_{k+1}]$, 
\begin{align*}
\nabla w(x)-\sum_{j=-N+1}^{N-1}w(x_j)\nabla\Phi_j^h(x)&=\nabla w(x)-\big[w(x_k) \nabla\Phi_k^h(x)+w(x_{k+1}) \nabla \Phi_{k+1}^h(x)\big]\\
&=\nabla w(x)-\frac{1}{h} \big\{w(x_{k+1})-w(x_k)\big\} \, .
\end{align*}
Since $w$ is assumed to vanish on $\{-x_N,x_N\}$, the latter identity can actually be extended to $k\in \{-N,N-1\}$ and $x\in [x_k,x_{k+1}]$. For instance, if $x\in [x_{N-1},x_N]$,
$$\nabla w(x)-\sum_{j=-N+1}^{N-1}w(x_j)\nabla\Phi_j^h(x)=\nabla w(x)+\frac{1}{h} w(x_{N-1})=\nabla w(x)-\frac{1}{h}\big\{w(x_N)-w(x_{N-1})\big\} \, .$$
Thus, for all $-N \leq k\leq N-1$ and $x\in [x_k,x_{k+1}]$,
\begin{align*}
\bigg|\nabla w(x)-\sum_{j=-N+1}^{N-1}w(x_j)\nabla\Phi_j^h(x) \bigg|^2&=\bigg| \frac{1}{h}\int_{x_k}^{x_{k+1}} dy \, \big\{\nabla w(x)-\nabla w(y)\big\}\bigg|^2\\
&\leq 2 \bigg[\bigg| \frac{1}{h}\int_{x_k}^{x} dy\int_y^x dz \,\Delta w(z)\bigg|^2+\bigg| \frac{1}{h}\int_{x}^{x_{k+1}} dy\int_x^y dz \, \Delta w(z)\bigg|^2\bigg]\\
& \lesssim h \int_{x_k}^{x_{k+1}} dz \, |\Delta w(z)|^2 \, .
\end{align*} 
Since
$$\vvvert w-\cq^{(h,L)}(w) \vvvert^2=\sum_{k=-N}^{N-1} \int_{x_k}^{x_{k+1}} dx\, \bigg|\nabla w(x)-\sum_{j=-N+1}^{N-1}w(x_j)\nabla\Phi_j^h(x) \bigg|^2 \, .$$
the desired estimate immediately follows.
\end{proof}

Lemma \ref{lem:cq-h-l} will help us to deduce the following control on the projection $\cp^{(h,L)}$. 

\begin{lemma}\label{lem:theta}
Fix $h,L>0$ such that $N:=\frac{L}{h}\in \mathbb{N}$. For every $w\in H_0^1(\Omega_L)\cap H^2(\Omega_L)$, it holds that
\begin{equation}\label{estim-cp}
\big\|w-\cp^{(h,L)}(w)\big\|_{L^2(\Omega_L)} \lesssim h^2 \|\Delta w\|_{L^2(\Omega_L)}\, ,
\end{equation}
for some proportional constant that does not depend on $h$ and $L$.
\end{lemma}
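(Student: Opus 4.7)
The plan is to follow a standard Aubin–Nitsche duality argument, adapted carefully so that all constants remain independent of $L$.

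First, I would exploit the defining property of $\cp^{(h,L)}$ as the $\llangle\cdot,\cdot\rrangle$-orthogonal projection onto $\cs^{(h,L)}$: this yields
\[
\vvvert w-\cp^{(h,L)}(w)\vvvert \leq \vvvert w-v\vvvert \quad\text{for every } v\in \cs^{(h,L)}.
\]
Choosing $v=\cq^{(h,L)}(w)$ (which belongs to $\cs^{(h,L)}$ because $w\in H^1_0(\Omega_L)$ forces $w(x_{-N})=w(x_N)=0$) and invoking Lemma \ref{lem:cq-h-l}, I obtain the $H^1$-type bound $\vvvert w-\cp^{(h,L)}(w)\vvvert \lesssim h\,\|\Delta w\|_{L^2(\Omega_L)}$, with a constant independent of $L$.

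Next, to upgrade this to the desired $L^2$ estimate, I would use duality. For an arbitrary $g\in L^2(\Omega_L)$, consider the auxiliary elliptic problem: find $\varphi\in H^1_0(\Omega_L)$ such that $-\Delta\varphi=g$ in $\Omega_L$. In this one-dimensional Dirichlet setting, existence and uniqueness are elementary, and one has automatically $\varphi\in H^2(\Omega_L)$ with $\|\Delta\varphi\|_{L^2(\Omega_L)}=\|g\|_{L^2(\Omega_L)}$ (the constant is in fact equal to $1$, so independent of $L$). Setting $e:=w-\cp^{(h,L)}(w)\in H^1_0(\Omega_L)$ and integrating by parts,
\[
\langle e,g\rangle = \langle e,-\Delta\varphi\rangle = \llangle e,\varphi\rrangle.
\]
The Galerkin orthogonality $\llangle e,\psi\rrangle=0$ for every $\psi\in \cs^{(h,L)}$ then lets me subtract $\cq^{(h,L)}(\varphi)\in \cs^{(h,L)}$:
\[
\langle e,g\rangle = \llangle e,\varphi-\cq^{(h,L)}(\varphi)\rrangle \leq \vvvert e\vvvert\cdot\vvvert \varphi-\cq^{(h,L)}(\varphi)\vvvert.
\]

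Applying the $H^1$-bound of the first step to $e$, and Lemma \ref{lem:cq-h-l} to $\varphi$, both with constants independent of $L$, I conclude
\[
|\langle e,g\rangle| \lesssim h\,\|\Delta w\|_{L^2(\Omega_L)}\cdot h\,\|\Delta\varphi\|_{L^2(\Omega_L)} \lesssim h^2\,\|\Delta w\|_{L^2(\Omega_L)}\,\|g\|_{L^2(\Omega_L)}.
\]
Taking the supremum over $g\in L^2(\Omega_L)$ with unit norm yields \eqref{estim-cp}. The only delicate point is verifying that the $H^2$ regularity estimate for the auxiliary Poisson problem is $L$-independent; this is not a real obstacle in 1D since the equation $-\varphi''=g$ directly gives $\|\varphi''\|_{L^2}=\|g\|_{L^2}$.
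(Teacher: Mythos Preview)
Your proposal is correct and follows essentially the same Aubin--Nitsche duality argument as the paper: both first derive the $H^1$-type bound $\vvvert w-\cp^{(h,L)}(w)\vvvert \lesssim h\,\|\Delta w\|_{L^2(\Omega_L)}$ via Lemma~\ref{lem:cq-h-l}, then solve an auxiliary Poisson problem and exploit Galerkin orthogonality to gain the extra factor of $h$. The only cosmetic differences are that the paper takes the specific right-hand side $g=w-\cp^{(h,L)}(w)$ (avoiding the final supremum) and subtracts $\cp^{(h,L)}(v)$ rather than $\cq^{(h,L)}(\varphi)$ in the orthogonality step; neither changes the substance of the argument.
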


\begin{proof}
Note first that by the definition of $\cp^{(h,L)}$ as an orthogonal projector on $\cs^{(h,L)}$, and using the operator $\cq^{(h,L)}$ introduced in Lemma \ref{lem:cq-h-l}, one has for every $z\in H_0^1(\Omega_L)\cap H^2(\Omega_L)$
\begin{equation}\label{estim-triple-norm}
\vvvert z-\cp^{(h,L)}(z) \vvvert\leq \vvvert z-\cq^{(h,L)}(z) \vvvert \lesssim h \|\Delta z\|_{L^2(\Omega_L)} \, ,
\end{equation}
where we have applied \eqref{estim-cq-h-l} to get the last estimate, with a proportional constant that does not depend on $L$ and $h$.

\smallskip

Then observe that since $\Omega_L$ is a compact domain, we can find $v\in H_0^1(\Omega_L)\cap H^2(\Omega_L)$ such that $-\Delta v=w-\cp^{(h,L)}(w)$, and accordingly
$$
\big\|w-\cp^{(h,L)}(w)\big\|_{L^2(\Omega_L)}^2=-\langle w-\cp^{(h,L)}(w),\Delta v\rangle=\llangle w-\cp^{(h,L)}(w), v\rrangle \, ,
$$
where the last identity follows from the fact that $w-\cp^{(h,L)}(w)$ vanishes on $\partial \Omega_L$. Using again the definition of $\cp^{(h,L)}$, we can here write
$$\llangle w-\cp^{(h,L)}(w), v\rrangle =\llangle w-\cp^{(h,L)}(w), v-\cp^{(h,L)}(v)\rrangle $$
and therefore
$$
\big\|w-\cp^{(h,L)}(w)\big\|_{L^2(\Omega_L)}^2=\llangle w-\cp^{(h,L)}(w), v-\cp^{(h,L)}(v)\rrangle \leq \vvvert w-\cp^{(h,L)}(w) \vvvert \vvvert v-\cp^{(h,L)}(v) \vvvert \, .
$$
Since both $w$ and $v$ belong to $H_0^1(\Omega_L)\cap H^2(\Omega_L)$, we can apply \eqref{estim-triple-norm} and get that 
$$
\big\|w-\cp^{(h,L)}(w)\big\|_{L^2(\Omega_L)}^2\lesssim h^2 \|\Delta w\|_{L^2(\Omega_L)} \|\Delta v\|_{L^2(\Omega_L)} \lesssim h^2 \|\Delta w\|_{L^2(\Omega_L)} \|w-\cp^{(h,L)}(w)\|_{L^2(\Omega_L)}\, ,
$$
with proportional constants that do not depend on $L,h$. The desired estimate \eqref{estim-cp} immediately follows.

\end{proof}

Let us now introduce the space-time projection operator $\mathscr{P}_m^{(h,L)}$ along the formula: for all $u\in L^\infty ([0,1];H_0^1(\Omega_L))$ such that $u_0=0$, 
\begin{equation}
\mathscr{P}_m^{(h,L)}(u)_0=0 \quad \text{and}\quad \mathscr{P}_m^{(h,L)}(u)_{t_i}:=2^m\int_{t_{i-1}}^{t_i} ds \, \cp^{(h,L)}(u_s) \quad \text{for}\ i=1,\ldots,2^m\, .
\end{equation}

\begin{lemma}\label{lem:p-rond}
Fix $h,L>0$ such that $N:=\frac{L}{h}\in \mathbb{N}$. For all $u\in H^{(1,2)}([0,1]\times \Omega_L) \cap L^\infty ([0,1];H_0^1(\Omega_L)\cap H^2(\Omega_L))$, $i=1,\ldots,2^m$ and $\vp\in \cs^{(h,L)}$, it holds that
\begin{equation}\label{identity-mscrp}
\langle \nabla(\mathscr{P}^{(h,L)}_m(u)_{t_i}),\nabla\vp\rangle=2^m\int_{t_{i-1}}^{t_i} ds \, \langle\nabla u_s,\nabla \vp\rangle
\end{equation}
and
\begin{equation}\label{estim:mscrp}
\sup_{i=1,\ldots,2^m}\big\|u_{t_i}-\mathscr{P}_m^{(h,L)}(u)_{t_i}\big\|\lesssim 2^{-m} \sup_{t\in [0,1]} \|(\partial_s u)_t\|+h^2 \sup_{t\in [0,1]}\|\Delta u_t\| \, ,
\end{equation}
where the proportional constant does not depend on $h$, $L$ and $m$.
\end{lemma}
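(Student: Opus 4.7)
The plan is to derive both assertions directly from the defining property of $\cp^{(h,L)}$ as an $\llangle\cdot,\cdot\rrangle$-orthogonal projection, together with the a-priori estimate \eqref{estim-cp} from Lemma \ref{lem:theta}, by splitting the error into a time-averaging part and a space-projection part.

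For the identity \eqref{identity-mscrp}, I would pull $\langle \nabla \cdot,\nabla \vp\rangle$ inside the time integral appearing in the definition of $\mathscr{P}_m^{(h,L)}(u)_{t_i}$, which yields
\[
\langle \nabla(\mathscr{P}^{(h,L)}_m(u)_{t_i}),\nabla\vp\rangle = 2^m \int_{t_{i-1}}^{t_i} ds \, \langle \nabla \cp^{(h,L)}(u_s),\nabla \vp\rangle .
\]
Since, by construction, $\cp^{(h,L)}(u_s)$ is the $\llangle\cdot,\cdot\rrangle$-orthogonal projection of $u_s$ onto $\cs^{(h,L)}$, one has $\langle \nabla \cp^{(h,L)}(u_s),\nabla \vp\rangle = \langle \nabla u_s,\nabla \vp\rangle$ for every $\vp\in\cs^{(h,L)}$, and reinjecting this identity under the time integral gives \eqref{identity-mscrp}.

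For the error bound \eqref{estim:mscrp}, my plan is to use the decomposition
\[
u_{t_i}-\mathscr{P}_m^{(h,L)}(u)_{t_i} = \Big[u_{t_i} - 2^m\!\int_{t_{i-1}}^{t_i}\!\! u_s\, ds\Big] + 2^m\!\int_{t_{i-1}}^{t_i}\big[u_s - \cp^{(h,L)}(u_s)\big]\, ds =: T_i + S_i .
\]
For the time-averaging term, I would rewrite $T_i = 2^m\int_{t_{i-1}}^{t_i}\!\int_s^{t_i} (\partial_r u)_r\, dr\, ds$ and apply the triangle inequality under the integral, bounding $\|T_i\|_{L^2(\Omega_L)}$ by $2^m \cdot \tfrac{1}{2}(t_i-t_{i-1})^2 \sup_{t}\|(\partial_t u)_t\|_{L^2(\Omega_L)} = \tfrac{1}{2}\cdot 2^{-m} \sup_{t}\|(\partial_t u)_t\|_{L^2(\Omega_L)}$. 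For the space-projection term $S_i$, the assumption $u \in L^\infty([0,1];H^1_0(\Omega_L)\cap H^2(\Omega_L))$ ensures that Lemma \ref{lem:theta} applies to each $u_s$, giving $\|u_s - \cp^{(h,L)}(u_s)\|_{L^2(\Omega_L)} \lesssim h^2 \|\Delta u_s\|_{L^2(\Omega_L)}$ with a constant independent of $h$ and $L$; integrating in time and using the uniform bound in $s$ then yields $\|S_i\|_{L^2(\Omega_L)} \lesssim h^2 \sup_{t}\|\Delta u_t\|_{L^2(\Omega_L)}$. Summing the two contributions and taking the supremum over $i$ yields \eqref{estim:mscrp}.

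I do not expect a genuine obstacle here: once the correct time/space splitting is in place, both pieces are controlled by direct applications of elementary calculus and of the already-established Lemma \ref{lem:theta}. The only point requiring attention is to make sure that the constant inherited from Lemma \ref{lem:theta} is independent of $L$, which is precisely what that lemma guarantees; this is essential because the final estimate must hold with a constant uniform in both $h$ and $L$.
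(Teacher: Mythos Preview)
Your proposal is correct and follows essentially the same route as the paper: the identity \eqref{identity-mscrp} is obtained from the orthogonal-projection property of $\cp^{(h,L)}$ pulled under the time integral, and the estimate \eqref{estim:mscrp} is derived from the very same splitting $u_{t_i}-\mathscr{P}_m^{(h,L)}(u)_{t_i}=\big[u_{t_i}-2^m\!\int_{t_{i-1}}^{t_i}u_s\,ds\big]+2^m\!\int_{t_{i-1}}^{t_i}\big[u_s-\cp^{(h,L)}(u_s)\big]\,ds$, with the first piece controlled by the fundamental theorem of calculus and the second by Lemma~\ref{lem:theta}.
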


\begin{proof}
Recall that for every $s\in [0,1]$ and $\vp\in \cs^{(h,L)}$, $\langle\nabla \cp^{(h,L)}(u_s), \nabla \vp\rangle=\langle \nabla u_s,\nabla \vp\rangle$, which immediately entails \eqref{identity-mscrp}. Then write
$$u_{t_i}-\mathscr{P}_m^{(h,L)}(u)=2^m \int_{t_{i-1}}^{t_i}ds \, \big[u_{t_i}-u_s\big]+2^m\int_{t_{i-1}}^{t_i}ds\, \big[u_s-\cp^{(h,L)} (u_s)\big]\, ,$$
so that
$$\big\|u_{t_i}-\mathscr{P}_m^{(h,L)}(u)_{t_i}\big\|\lesssim 2^m \int_{t_{i-1}}^{t_i}ds \int_{s}^{t_i}dr \|(\partial_s u)_r\|+2^m\int_{t_{i-1}}^{t_i}ds\,  \big\| u_s-\cp^{(h,L)}(u_s)\big\|$$
and the estimate \eqref{estim:mscrp} is now a straightforward consequence of \eqref{estim-cp}.

\end{proof}

We are now in a position to prove the main result of this subsection.

\begin{proof}[Proof of Proposition \ref{prop:galerkin-std-s-t}]
For every $i=0,\ldots,2^m$, let us decompose the difference $u_{t_i}-\cg^{(h,L)}_m(u)_{t_i}$ as 
\begin{equation}\label{decompo-ga-theta-s-t}
u_{t_i}-\cg^{(h,L)}_m(u)_{t_i}=\big[u_{t_i}-\mathscr{P}^{(h,L)}_m(u)_{t_i}\big]+\big[\mathscr{P}^{(h,L)}_m(u)_{t_i}-\cg^{(h,L)}_m(u)_{t_i}\big]=:(\ga^{(h,L)}_m)_{t_i}+(\theta^{(h,L)}_m)_{t_i} \, .
\end{equation}
Thanks to Lemma \ref{lem:p-rond}, we already know that
\begin{equation}\label{estim-gamma-s-t}
\sup_{i=1,\ldots,2^m} \big\|(\ga^{(h,L)}_m)_{t_i}\big\|_{L^2(\Omega_L)} \lesssim 2^{-m} \sup_{t\in [0,1]} \|(\partial_s u)_t\|+h^2 \sup_{t\in [0,1]}\|\Delta u_s\|\, ,
\end{equation}
for some proportional constant that does not depend on $h$, $L$ and $m$.

\

Let us now turn to the estimate for $(\theta^{(h,L)}_m)_{t_i}$ ($i=1,\ldots,2^m$), and to alleviate the notation, let us write from now on $\ga,\theta,\mathscr{P},\cg$ instead of $\ga^{(h,L)}_m,\theta^{(h,L)}_m,\mathscr{P}^{(h,L)}_m,\cg^{(h,L)}_m$, respectively. 

\smallskip

Given $\vp\in \cs^{(h,L)}$, observe that for every $i=1,\ldots,2^m$,
\begin{align*}
&2^m\langle \theta_{t_i}-\theta_{t_{i-1}},\vp\rangle+\langle \nabla \theta_{t_i},\nabla \vp\rangle\\
&=\big[2^m\langle \mathscr{P}(u)_{t_i}-\mathscr{P}(u)_{t_{i-1}},\vp\rangle+\langle \nabla(\mathscr{P}(v)_{t_i}),\nabla\vp\rangle\big]-\big[2^m\langle \cg(u)_{t_i}-\cg(u)_{t_{i-1}},\vp\rangle+\langle \nabla(\cg(v)_{t_i}),\nabla\vp\rangle\big]\\
&=\big[2^m\langle \mathscr{P}(u)_{t_i}-\mathscr{P}(u)_{t_{i-1}},\vp\rangle+\langle \nabla(\mathscr{P}(v)_{t_i}),\nabla\vp\rangle\big]-2^m\int_{t_{i-1}}^{t_i} ds \, \langle (\partial_t u)_s-\Delta u_s,\vp\rangle\\
&=2^m\big[\langle \mathscr{P}(u)_{t_i}-\mathscr{P}(u)_{t_{i-1}},\vp\rangle-\langle u_{t_i}-u_{t_{i-1}},\vp\rangle\big]+\bigg[\langle \nabla(\mathscr{P}(v)_{t_i}),\nabla\vp\rangle-2^m\int_{t_{i-1}}^{t_i} ds \, \langle\nabla u_s,\nabla \vp\rangle\bigg]
\end{align*}
and so, using identity \eqref{identity-mscrp}, we end up with the relation
\begin{equation}\label{equa-theta-s-t}
2^m\langle \theta_{t_i}-\theta_{t_{i-1}},\vp\rangle+\langle \nabla \theta_{t_i},\nabla \vp\rangle=-2^m\langle \ga_{t_i}-\ga_{t_{i-1}},\vp\rangle \, .
\end{equation}

Now recall that for every $i=1,\ldots,2^m$, $\theta_{t_i}\in \cs^{(h,L)}$, and therefore this element can be expanded $\theta_{t_i}(x)=\sum_{j=-N+1}^{N-1} \pmb{\theta}^j_{t_i} \Phi_j^h(x)$ for some vector $(\pmb{\theta}^j_{t_i})_{-N+1\leq j\leq N-1}$. With this notation in hand, and since $\theta_0=0$, we can apply Lemma \ref{lem:solu-expli} to the relation \eqref{equa-theta-s-t} and assert that for every $i=1,\ldots,2^m$,
\begin{align*}
\pmb{\theta}_{t_i}&=-\ce_{h,L}^{-1}\sum_{k=1}^i \big[(I+2^{-m}\cm_{h,L})^{-1}\big]^{i-k+1}(\ce_{h,L}^\ast)^{-1}\langle \ga_{t_k}-\ga_{t_{k-1}},\Phi^h \rangle\\
&=-\ce_{h,L}^{-1}\big[(I+2^{-m}\cm_{h,L})^{-1}\big](\ce_{h,L}^\ast)^{-1}\langle \ga_{t_i},\Phi^h \rangle\\
&\hspace{0.5cm}+\ce_{h,L}^{-1}\sum_{k=1}^{i-1} \big\{\big[(I+2^{-m}\cm_{h,L})^{-1}\big]^{i-k}-\big[(I+2^{-m}\cm_{h,L})^{-1}\big]^{i-k+1}\big\}(\ce_{h,L}^\ast)^{-1}\langle \ga_{t_k},\Phi^h \rangle \, ,
\end{align*}
where the second identity is derived from a discrete integration by parts, and the fact that $\ga_0=0$.

\smallskip

Using \eqref{bou-e} and \eqref{bou-exp-discr}, we can first assert that 
$$\big\|\ce_{h,L}^{-1}\big[(I+2^{-m}\cm_{h,L})^{-1}\big](\ce_{h,L}^\ast)^{-1}\langle \ga_{t_i},\Phi^h \rangle\big\|_2 \lesssim \frac{1}{h}\big\|\langle \ga_{t_k},\Phi^h\rangle\big\|_2 \, ,$$
Then, using \eqref{bou-e} and \eqref{bou-deriv-exp-discr}, one has for every $\varepsilon >0$,
\begin{align*}
&\bigg\|\ce_{h,L}^{-1}\sum_{k=1}^{i-1} \big\{\big[(I+2^{-m}\cm_{h,L})^{-1}\big]^{i-k}-\big[(I+2^{-m}\cm_{h,L})^{-1}\big]^{i-k+1}\big\}(\ce_{h,L}^\ast)^{-1}\langle \ga_{t_k},\Phi^h \rangle \bigg\|_2\\
&\lesssim \frac{2^{m\varepsilon}}{h^{1/2}} \sup_{k=1,\ldots,2^m} \big\|(\ce_{h,L}^\ast)^{-1}\langle \ga_{t_k},\Phi^h \rangle \big\|_2\lesssim \frac{2^{m\varepsilon}}{h} \sup_{k=1,\ldots,2^m} \big\|\langle \ga_{t_k},\Phi^h \rangle \big\|_2 \, .
\end{align*}
Therefore, we obtain that
$$\|\pmb{\theta}_{t_i}\|_2 \lesssim \frac{2^{m\varepsilon}}{h}\sup_{k=1,\ldots,2^m}\big\|\langle \ga_{t_k},\Phi^h\rangle\big\|_2 \, .$$
It is now readily checked that 
$$\big\|\langle \ga_{t_k},\Phi^h\rangle\big\|_2^2=\sum_{j=-N+1}^{N-1} \bigg| \int_{x_{j-1}}^{x_{j+1}} dx \, \ga_{t_k}(x)\Phi_j^h(x)\bigg|^2 \lesssim h \, \|\ga_{t_k}\|_{L^2(\Omega_L)}^2$$
for proportional constants independent of $h$ and $L$, and so, using \eqref{encadr-h},
\begin{equation}\label{estim-final-theta-s-t}
\|\theta_{t_i}\|_{L^2(\R)} \leq h^{1/2} \|\pmb{\theta}_{t_i}\|_2 \lesssim 2^{m\varepsilon}\sup_{k=1,\ldots,2^m}\|\ga_{t_k}\|_{L^2(\Omega_L)}\, .
\end{equation}

Going back to the decomposition \eqref{decompo-ga-theta-s-t}, the desired estimate \eqref{estim:galerkin-std-s-t} follows from the combination of \eqref{estim-gamma-s-t} and \eqref{estim-final-theta-s-t}.

\end{proof}

\subsection{The general case}

Let us now extend the result of Proposition \ref{prop:galerkin-std-s-t} to the situation where $u$ does not necessarily vanish on the boundary of $\Omega_L$.

\begin{proposition}\label{prop:gene-galer-s-t}
Fix a smooth compactly-supported function $\rho:\R\to [0,1]$. Then, for all $u\in H^{1,2}_{loc}([0,1]\times \R)$ such that $u_0=0$, all $h,L>0$ such that $N:=\frac{L}{h}\in \mathbb{N}$, all $m\geq 1$ and $\varepsilon >0$, it holds that
\begin{align}
&\sup_{i=1,\ldots,2^m}\big\| \rho \cdot \big\{ u_{t_i}- \cg^{(h,L)}_m(u)_{t_i} \big\} \big\|_{L^2(\R)} \nonumber\\
&\lesssim  \bigg[2^{-m(1-\varepsilon)} \sup_{t\in [0,1]} \|(\partial_t u)_t\|_{L^2(\R)}+2^{m\varepsilon}h^2 \sup_{t\in [0,1]}\|\Delta u_t\|_{L^2(\R)}\bigg]+\sup_{t\in [0,1]} \sup_{x\in \partial\Omega_L}\Big\{ |u_t(x)|+L^{1/2} \big|(\partial_t u)_t(x)\big|\Big\} \, ,\label{desired-galerkin-bound-s-t}
\end{align}
for some proportional constant that does not depend on $h$, $L$ and $m$.
\end{proposition}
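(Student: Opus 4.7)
The strategy is to reduce the problem to Proposition~\ref{prop:galerkin-std-s-t} by writing $u$ as the sum of a \emph{bulk} piece vanishing on $\partial\Omega_L$ and a \emph{boundary correction} vanishing on $\text{supp}(\rho)$. Concretely, fix once and for all a smooth compactly-supported function $\psi:\R\to\R$ with $\psi(0)=1$ and $\text{supp}(\psi)\subset [-a,0]$ for some $a>0$; assume $L$ is large enough that $\text{supp}(\rho)$ is disjoint from $[-L,-L+a]\cup[L-a,L]$ (the opposite regime corresponds to a bounded range of parameters and can be absorbed into the constant). Define
\begin{equation*}
\chi_t(x):=u_t(L)\,\psi(x-L)+u_t(-L)\,\psi(-x-L),\qquad \tilde u:=u-\chi,
\end{equation*}
so that $\tilde u|_{\partial\Omega_L}=0$, $\chi|_{\text{supp}(\rho)}=0$, and in particular $\rho\cdot\chi=0$. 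Since the Galerkin scheme $\cg^{(h,L)}_m$ is linear in its input (cf.\ \eqref{equation-bbu}), we may split
\begin{equation*}
\rho\cdot\bigl[u_{t_i}-\cg^{(h,L)}_m(u)_{t_i}\bigr] = \rho\cdot\bigl[\tilde u_{t_i}-\cg^{(h,L)}_m(\tilde u)_{t_i}\bigr] - \rho\cdot\cg^{(h,L)}_m(\chi)_{t_i}.
\end{equation*}

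The first piece is controlled by Proposition~\ref{prop:galerkin-std-s-t} applied to $\tilde u|_{\Omega_L}\in H_0^1(\Omega_L)\cap H^2(\Omega_L)$. Since $\psi$ is fixed, elementary computations give the $L$-uniform bounds $\|\partial_t\chi_t\|_{L^2(\Omega_L)}\lesssim \sup_{x\in\partial\Omega_L}|\partial_t u_t(x)|$ and $\|\Delta\chi_t\|_{L^2(\Omega_L)}\lesssim \sup_{x\in\partial\Omega_L}|u_t(x)|$ (note in particular that $\psi''$ appears in $\Delta\chi$ with coefficient $u_t(\pm L)$, not $\partial_t u_t(\pm L)$). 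Injecting these into Proposition~\ref{prop:galerkin-std-s-t} produces the desired main term together with an extra contribution of order $\sup_{\partial\Omega_L}|u|+\sup_{\partial\Omega_L}|\partial_t u|$, which is dominated by the announced boundary correction $\sup_\partial|u|+L^{1/2}\sup_\partial|\partial_t u|$ (for $L\geq 1$).

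The crux is to bound $\|\cg^{(h,L)}_m(\chi)_{t_i}\|_{L^2(\Omega_L)}$ uniformly in $h$, $L$ and $m$: since the discrete heat propagator does not preserve spatial support, we cannot rely on a localization argument. Instead, I would use the explicit representation of Lemma~\ref{lem:solu-expli}, expanding $\cg^{(h,L)}_m(\chi)_{t_i}=\sum_{j}\mathbf{g}^j_{t_i}\Phi^h_j$ with
\begin{equation*}
\mathbf{g}_{t_i}=\ce_{h,L}^{-1}\sum_{k=1}^i\bigl[(I+2^{-m}\cm_{h,L})^{-1}\bigr]^{i-k+1}(\ce_{h,L}^\ast)^{-1}\langle f_k,\Phi^h\rangle,
\end{equation*}
where $\langle f_k,\Phi^h_j\rangle=\int_{t_{k-1}}^{t_k}\langle\partial_s\chi_s-\Delta\chi_s,\Phi^h_j\rangle\,ds$ (the integration-by-parts boundary terms vanish because each $\Phi^h_j$ equals zero at $\pm L$). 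The essential observation is that $\chi$ is supported in the two narrow intervals $[-L,-L+a]\cup[L-a,L]$, so only $O(a/h)$ indices $j$ give a nonzero contribution, and for each such $j$ one has $|\langle f_k,\Phi^h_j\rangle|\lesssim 2^{-m}\,h\,(\sup_\partial|u|+\sup_\partial|\partial_t u|)$; hence $\|\langle f_k,\Phi^h\rangle\|_2\lesssim 2^{-m}h^{1/2}M$ with $M:=\sup_\partial|u|+\sup_\partial|\partial_t u|$. Combining this with $\|\ce_{h,L}^{-1}\|_{2;2}^2\leq 3/h$ from Lemma~\ref{lem:encadr-h} and $\|[(I+2^{-m}\cm_{h,L})^{-1}]^\ell\|_{2;2}\leq 1$ from Lemma~\ref{lem:bounds-galerkin-discr}, and using that the sum over $k$ has at most $2^m$ terms so that the $2^{-m}$-factors cancel against the number of iterates, one obtains $\|\mathbf{g}_{t_i}\|_2\lesssim h^{-1/2}M$, and thus $\|\cg^{(h,L)}_m(\chi)_{t_i}\|_{L^2(\Omega_L)}\lesssim h^{1/2}\|\mathbf{g}_{t_i}\|_2\lesssim M$ via \eqref{encadr-h}. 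Assembling the two pieces yields \eqref{desired-galerkin-bound-s-t}. The hardest step is precisely this spillover estimate: the proof relies on a delicate interplay between the $2^{-m}$ decay of each source term, the bound $\leq 2^m$ on the number of time iterates, and the $h$-balance coming from the combination of $\|\ce_{h,L}^{-1}\|_{2;2}$ and $\|\Phi^h_j\|_{L^1}\sim h$.
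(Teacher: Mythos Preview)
Your approach is correct in spirit and yields the stated bound, but it differs from the paper's argument in the choice of boundary correction. The paper subtracts the \emph{affine} interpolant
\[
u^L_s(x):=u_s(x)-\tfrac{1}{2L}\bigl(u_s(L)-u_s(-L)\bigr)x-\tfrac12\bigl(u_s(-L)+u_s(L)\bigr),
\]
whereas you subtract a \emph{localized bump} $\chi$. The affine choice buys two simplifications: first, $\Delta u^L=\Delta u$ exactly, so the bulk estimate from Proposition~\ref{prop:galerkin-std-s-t} carries no extra $\Delta\chi$-term; second, $\nabla(u-u^L)$ is constant in $x$, so $\langle\nabla(u-u^L),\nabla\varphi\rangle=0$ for every $\varphi\in\cs^{(h,L)}$, and the Galerkin source for $u-u^L$ reduces to the pure $\partial_t$ contribution. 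The price is that $u-u^L$ is supported on all of $\Omega_L$, which is precisely the origin of the $L^{1/2}$ factor (via $\sum_j\|\Phi^h_j\|_{L^1}^2\lesssim h^2N=hL$). Your localized correction, by contrast, trades a nonzero $\Delta\chi$ (harmless, since it only produces an $O(\sup_\partial|u|)$ correction) for compact support of the source, and in fact gives the sharper boundary term $\sup_\partial|u|+\sup_\partial|\partial_t u|$ without the $L^{1/2}$.

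One small gap: your count ``$O(a/h)$ indices'' and the ensuing estimate $\|\langle f_k,\Phi^h\rangle\|_2\lesssim 2^{-m}h^{1/2}M$ tacitly assume $h\lesssim a$. When $h$ is large the count is $O(1)$ but each term is still $\lesssim 2^{-m}hM$, producing $\|\cg(\chi)\|_{L^2}\lesssim h^{1/2}M$, which is not uniform. The fix is immediate: in that regime bound instead $|\langle f_k,\Phi^h_j\rangle|\le 2^{-m}\|\partial_t\chi-\Delta\chi\|_{L^1}\|\Phi^h_j\|_{L^\infty}\lesssim 2^{-m}M$, giving $\|\cg(\chi)\|_{L^2}\lesssim h^{-1/2}M$. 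Combining the two cases yields a bound uniform in $h$. This is a cosmetic point (the application has $h\to 0$), but it is needed for the constant to be genuinely independent of $h$ as claimed.
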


\begin{proof}

\smallskip

First, let us introduce a function $u^L\in H^{1,2}_{loc}([0,1]\times \R)$ such that $u^L_s(-L)=u^L_s(L)=0$ and $\Delta u^L_s=\Delta u_s$. To be more specific, we take
\begin{equation}\label{expres-u-l}
u^L_s(x):=u_s(x)-\frac{1}{2L}(u_s(L)-u_s(-L))x-\frac12(u_s(-L)+u_s(L))\, .
\end{equation}
Also, in the sequel, we always consider $L$ large enough so that $\text{supp} \, \rho \subset \Omega_L$. Then one has
\begin{align}
&\sup_{i=1,\ldots,2^m}\big\| \rho \cdot \big\{ u_{t_i}- \cg^{(h,L)}_m(u)_{t_i} \big\} \big\|_{L^2(\R)}\nonumber\\
& \leq \sup_{i=1,\ldots,2^m}\big\|  u^L_{t_i}- \cg^{(h,L)}_m(u^L)_{t_i} \big\|_{L^2(\Omega_L)}+\sup_{t\in [0,1]}\big\| \rho \cdot \big\{ u_t- u^L_t\big\} \big\|_{L^2(\R)}+\sup_{i=1,\ldots , 2^m}\big\|  \cg^{(h,L)}_m(u-u^L)_{t_i} \big\|_{L^2(\Omega_L)} \, ,\label{decompo-strategy}
\end{align}
and we can estimate each of these three quantities separately.

\

To handle the first quantity, observe that the function $u^L$ satisfies the conditions of Proposition \ref{prop:galerkin-std-s-t}, and therefore, thanks to this result, one has for every $\varepsilon >0$ and every $L>0$ large enough 
\begin{align}
&\sup_{i=1,\ldots,2^m}\big\|  u^L_{t_i}- \cg^{(h,L)}_m(u^L)_{t_i} \big\|_{L^2(\Omega_L)}\nonumber\\
&\lesssim 2^{-m(1-\varepsilon)} \sup_{t\in [0,1]} \|(\partial_t u^L)_t\|_{L^2(\Omega_L)}+2^{m\varepsilon}h^2 \sup_{t\in [0,1]}\|\Delta u^L_t\|_{L^2(\Omega_L)}\nonumber\\
&\lesssim \bigg[2^{-m(1-\varepsilon)} \sup_{t\in [0,1]} \|(\partial_t u)_t\|_{L^2(\R)}+2^{m\varepsilon}h^2 \sup_{t\in [0,1]}\|\Delta u_t\|_{L^2(\R)}\bigg]+\sup_{t\in [0,1]}\sup_{x\in \partial\Omega_L} L^{1/2}\big|(\partial_t u)_t(x)\big| \, ,
\end{align}
where the proportional constants do not depend on $h$, $L$ and $m$.

\smallskip

For the second quantity in \eqref{decompo-strategy}, and with expression \eqref{expres-u-l} in mind, it is clear that 
\begin{equation}
\sup_{t\in [0,T]}\big\| \rho \cdot \big\{ u_t- u^L_t\big\} \big\|_{L^2(\R)} \lesssim \sup_{t\in [0,T]} \sup_{x\in \partial\Omega_L} |u_t(x)|\, ,
\end{equation}
where the proportional constant only depends on $\rho$.

\smallskip

As for the third quantity in \eqref{decompo-strategy}, one has, owing to \eqref{charac-g-h-l-m} and Lemma \ref{lem:solu-expli}: $\cg^{(h,L)}_m(u-u^L)_{t_i}=\sum_{j=-N+1}^{N-1} \pmb{\delta}^j_{t_i} \Phi^h_j$, with
$$\pmb{\delta}_{t_i}=\ce_{h,L}^{-1}\sum_{k=1}^i \big[(I+2^{-m}\cm_{h,L})^{-1}\big]^{i-k+1}(\ce_{h,L}^\ast)^{-1}\int_{t_{k-1}}^{t_k}ds\,  \langle \partial_t (u-u^L)_{s}, \Phi^h\rangle\, ,$$
where we have used the fact that $\langle \nabla(u_s-u^L_s),\nabla \vp\rangle=0$ for all $s\in [0,1]$ and $\vp\in \cs^{(h,L)}$.

\smallskip

Using \eqref{encadr-h}, \eqref{bou-e} and \eqref{bou-exp-discr}, we deduce that
\begin{equation}\label{third-term}
\big\|  \cg^{(h,L)}_m(u-u^L)_{t_i} \big\|_{L^2(\Omega_L)}\leq h^{1/2} \|\pmb{\delta}_{t_i}\|_2\lesssim \frac{1}{h^{1/2}} \sup_{s\in [0,1]} \|\langle \partial_t (u-u^L)_s, \Phi^h\rangle \|_2\, .
\end{equation}
Then we can observe that
\begin{align*}
\|\langle \partial_t (u-u^L)_s, \Phi^h\rangle \|_2^2
&\lesssim \bigg|\frac{1}{2L}\big\{(\partial_s u)_s(L)-(\partial_s u)_s(-L)\big\}\bigg|^2 \sum_{j=-N+1}^{N-1} \bigg| \int_{\R} dx \, x\Phi_j^h(x)\bigg|^2 \\
&\hspace{3cm}+\bigg|\frac{1}{2}\big\{(\partial_s u)_s(L)+(\partial_s u)_s(-L)\big\}\bigg|^2 \sum_{j=-N+1}^{N-1} \bigg| \int_{\R} dx \, \Phi_j^h(x)\bigg|^2\\
&\lesssim \bigg(\sup_{x\in \partial \Omega_L}\big|(\partial_s u)_s(x)\big|^2\bigg) \sum_{j=-N+1}^{N-1} \bigg( \int_{\R} dx \, |\Phi_j^h(x)|\bigg)^2\lesssim  h^2 N\sup_{x\in \partial \Omega_L}\big|(\partial_s u)_s(x)\big|^2\, .
\end{align*}
Now remember that $Nh=L$, and therefore, going back to \eqref{third-term}, we deduce
\begin{equation}
\sup_{i=1,\ldots , 2^m}\big\|  \cg^{(h,L)}_m(u-u^L)_{t_i} \big\|_{L^2(\Omega_L)} \lesssim \sup_{s\in [0,1]}\sup_{x\in \partial \Omega_L}L^{1/2}\big|(\partial_s u)_s(x)\big| \, .
\end{equation}

\end{proof}

\subsection{Application to the stochastic problem}

We now intend to apply the result of Proposition \ref{prop:gene-galer-s-t} to our problem \eqref{start-equation-space-discret}, that is to $u= \widetilde{\<Psi>}^{\ka,n}=G\ast \partial_t \partial_x \widetilde{B}^{\ka,n}$. For this application to be relevant, we naturally need, first, to find suitable bounds on the terms involved in the right-hand side of \eqref{desired-galerkin-bound-s-t}. This is the purpose of the next three lemmas.

\begin{lemma}\label{lem:firs-le}
Let $f\in L^{\infty}([0,1]\times \R)$ be a function of the form $f_s(x)=\sum_{k=-K}^{K-1} a^k_s\, \1_{[x_k,x_{k+1})}(x)$, for some $K\geq 1$, some coefficients $a^k\in L^\infty([0,1])$ and some points $x_{-K}\leq \ldots \leq x_{K}$. Setting 
$$M:=\Big(\sup_{k=-K,\ldots,K-1}|x_{k+1}-x_k|\Big)^{-1}$$
and assuming that $M\geq 1$, one has for every $\varepsilon >0$
\begin{equation}\label{appli-f-1-bis}
\sup_{t\in [0,1]} \big\| \partial_t(G\ast f)_t \big\|_{L^2(\R)}+\sup_{t\in [0,1]} \big\| \Delta(G\ast f)_t \big\|_{L^2(\R)} \lesssim \frac{K}{M^{\frac12-\varepsilon}}\sup_{t\in [0,1]} \|f_t\|_{L^\infty(\R)} \, ,
\end{equation}
where the proportional constant does not depend on $K$ and $M$.
\end{lemma}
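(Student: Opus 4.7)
The plan is to combine a Duhamel-type identity with a Fourier-analytic bound on $\Delta(G\ast f)_t$. Observe that at the level of the spatial Fourier transform one has $\widehat{(G\ast f)_t}(\xi)=\int_0^t e^{-(t-s)\xi^2}\widehat{f_s}(\xi)\, ds$, and a direct differentiation gives $\partial_t(G\ast f)_t=f_t+\Delta(G\ast f)_t$. By the triangle inequality, both estimates in \eqref{appli-f-1-bis} are then reduced to bounding $\|f_t\|_{L^2(\R)}$ and $\|\Delta(G\ast f)_t\|_{L^2(\R)}$ separately. The first quantity is immediate from the piecewise-constant structure:
$$\|f_t\|_{L^2(\R)}^2=\sum_{k=-K}^{K-1}|a_t^k|^2(x_{k+1}-x_k)\leq \frac{2K}{M}\|f_t\|_{L^\infty(\R)}^2\, ,$$
hence $\|f_t\|_{L^2(\R)}\lesssim K\, M^{-1/2}\|f_t\|_{L^\infty(\R)}$ (using $\sqrt{K}\leq K$).

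For the main term, I would start from the representation $\widehat{\Delta(G\ast f)_t}(\xi)=-\xi^2\int_0^t e^{-(t-s)\xi^2}\widehat{f_s}(\xi)\, ds$ and use the elementary interpolation $|\xi|^2 e^{-r\xi^2}\leq C_\alpha\, r^{-(1-\alpha)}|\xi|^{2\alpha}$, valid for every $\alpha\in (0,1)$. Combining Plancherel with Minkowski's inequality in $L^2_\xi$ then yields
$$\|\Delta(G\ast f)_t\|_{L^2(\R)}\lesssim_\alpha \int_0^t (t-s)^{-(1-\alpha)}\,\|(-\Delta)^\alpha f_s\|_{L^2(\R)}\, ds\, .$$
The key computation is, for a single indicator, the change of variable $u=(b-a)\xi$ in
$$\|(-\Delta)^\alpha \mathbf{1}_{[a,b]}\|_{L^2(\R)}^2\,\sim\,\int_\R |\xi|^{4\alpha-2}\, 2\big(1-\cos((b-a)\xi)\big)\, d\xi\,=\,c_\alpha\,(b-a)^{1-4\alpha}\, ,$$
where the resulting integral converges precisely when $0<\alpha<1/4$. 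Applying the triangle inequality on the decomposition $f_s=\sum_k a_s^k \mathbf{1}_{[x_k,x_{k+1})}$, together with the bound $(x_{k+1}-x_k)^{1/2-2\alpha}\leq M^{2\alpha-1/2}$ (valid since $1/2-2\alpha>0$ and $x_{k+1}-x_k\leq M^{-1}$), yields
$$\|(-\Delta)^\alpha f_s\|_{L^2(\R)}\lesssim_\alpha K\,\|f_s\|_{L^\infty(\R)}\, M^{2\alpha-1/2}\, ,$$
and a harmless time integration $\int_0^t(t-s)^{-(1-\alpha)}ds\leq 1/\alpha$ completes the argument. The choice $\alpha=\varepsilon/2$ is admissible for any $\varepsilon\in (0,1/2)$; the range $\varepsilon\geq 1/2$ is trivial since $M\geq 1$.

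The main obstacle lies in the arithmetic balance producing the $\varepsilon$-loss. The constraint $\alpha>0$ is forced by the time-integrability of $(t-s)^{-(1-\alpha)}$, while the constraint $\alpha<1/4$ is forced by the convergence at infinity of $|u|^{4\alpha-2}(1-\cos u)$. This dichotomy prevents extracting the full factor $M^{-1/2}$ and is what produces the loss $M^{-(1/2-\varepsilon)}$ in \eqref{appli-f-1-bis}. A secondary point worth double-checking is that the exchange of $\partial_t$ and the Duhamel integral is justified: this is straightforward at the Fourier level for each fixed $\xi$, and hence holds in $L^2(\R)$.
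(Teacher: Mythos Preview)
Your proof is correct and follows essentially the same route as the paper: Plancherel on the Duhamel representation, the heat-smoothing interpolation $|\xi|^2 e^{-r\xi^2}\lesssim r^{-(1-\alpha)}|\xi|^{2\alpha}$, and the Fourier decay of indicator functions. The paper organizes the last step slightly differently---it bounds $|\widehat{f_s}(\xi)|$ pointwise by interpolating $|\int_{x_k}^{x_{k+1}}e^{-ix\xi}dx|$ between $M^{-1}$ and $|\xi|^{-1}$, and then splits the $\xi$-integral into $\{|\xi|\le 1\}$ and $\{|\xi|\ge 1\}$---whereas you pass through Minkowski and the explicit scaling identity for $\|(-\Delta)^\alpha\mathbf{1}_{[a,b]}\|_{L^2}$; but the underlying mechanism (triangle inequality over the $2K$ intervals, combined with the $|\xi|^{-1}$ decay of $\widehat{\mathbf{1}_{[a,b]}}$) is identical, and both arguments produce the same constraint $\alpha<1/4$ responsible for the $\varepsilon$-loss.
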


\begin{proof}
Using Plancherel theorem, we can write
\begin{align*}
\big\| \Delta(G\ast f)_t \big\|_{L^2(\R)}^2=c \int_{\R}d\xi \, |\xi|^4 \big| \cf \big( (G\ast f)_t\big)(\xi)\big|^2  &=c \int_{\R}d\xi \, |\xi|^4 \bigg| \int_0^t ds \, e^{-\xi^2(t-s)} \big(\cf f_s\big)(\xi) \bigg|^2 \\
&\lesssim \int_{\R}d\xi \, |\xi|^{\varepsilon}  \bigg( \int_0^t \frac{ds}{(t-s)^{1-\frac{\varepsilon}{4}}} \big|\big(\cf f_s\big)(\xi)\big| \bigg)^2 \, .
\end{align*}
Then one has, for every $\la\in [0,1]$,
\begin{align*}
\big|\big(\cf f_s\big)(\xi)\big| =\bigg| \sum_{k=-K}^{K-1} a_s^k \int_{x_k}^{x_{k+1}}dx \, e^{-\imath x\xi} \bigg|&\leq \sum_{k=-K}^{K-1} |a_s^k| \bigg|\int_{x_k}^{x_{k+1}}dx \, e^{-\imath x\xi}\bigg|^\la \bigg| \int_{x_k}^{x_{k+1}}dx \, e^{-\imath x\xi} \bigg|^{1-\la}\\
&\lesssim \frac{K}{M^\la |\xi|^{1-\la}}\sup_{t\in [0,1]} \|f_t\|_{L^\infty(\R)} \, ,
\end{align*}
and so
\begin{align*}
&\big\| \Delta(G\ast f)_t \big\|_{L^2(\R)}^2\lesssim \bigg(K \sup_{t\in [0,1]} \|f_t\|_{L^\infty(\R)}\bigg)^2 \bigg[\frac{1}{M^2}\int_{|\xi|\leq 1}d\xi \, |\xi|^{\varepsilon}  + \frac{1}{M^{1-2\varepsilon}}\int_{|\xi|\geq 1} \frac{d\xi}{|\xi|^{1+\varepsilon}} \bigg]\, ,
\end{align*}
which gives the desired bound for $\sup_{t\in [0,1]} \big\| \Delta(G\ast f)_t \big\|_{L^2(\R)}$.

\smallskip

As for $\partial_t(G\ast f)_t $, recall that $\partial_t(G\ast f)_t=\Delta(G\ast f)_t+f_t$, and
$$\sup_{t\in [0,1]} \|f_t\|_{L^2} \leq \bigg(\frac{2K}{M}\bigg)^{1/2}\sup_{t\in [0,1]} \|f_t\|_{L^\infty(\R)}\lesssim \frac{K}{M^{\frac12-\varepsilon}}\sup_{t\in [0,1]} \|f_t\|_{L^\infty(\R)}.$$
\end{proof}

\begin{lemma}\label{lem:sec-le}
Let $f\in L^{\infty}([0,1]\times \R)$ be a function such that $\bigcup_{t\in [0,1]}\text{supp} \, f_t \subset [-2^n,2^n]$, for some $n\geq 1$. Then for all $\beta>0$ and $L>2^n$, it holds that
\begin{equation}\label{appli-f-2}
\sup_{t\in [0,1]} \sup_{x\in \partial\Omega_L}\Big\{ |(G\ast f)_t(x)|+L^{1/2} \big|(\partial_t (G\ast f))_t(x)\big|\Big\} \lesssim \frac{L^{1/2}}{|L-2^n|^\be}\sup_{t\in [0,T]} \|f_t\|_{L^\infty(\R)}\, , 
\end{equation}
where the proportional constant does not depend on $n$ and $L$.
\end{lemma}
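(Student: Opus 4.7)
The proof will rest on the spatial separation $|x-y| \ge L - 2^n =: a > 0$ valid for all $x \in \{-L,L\}$ and all $y$ in the support of $f_s$. My plan is to reduce both quantities in the supremum to controlling the two scalar integrals $\int_0^1 \int_{|z| \ge a} G_u(z)\,dz\,du$ and $\int_0^1 \int_{|z| \ge a} |\Delta G_u(z)|\,dz\,du$. Indeed, the first directly majorizes $|(G\ast f)_t(\pm L)|/\|f\|_\infty$; for the second, I would first observe that $f_t(\pm L) = 0$ (since $\pm L$ lies outside $\mathrm{supp}\, f_t$), so that differentiating under the integral sign and using $\partial_u G_u = \Delta G_u$ for $u>0$ yields $\partial_t(G\ast f)_t(\pm L) = \int_0^t\!\int \Delta G_{t-s}(\pm L-y) f_s(y)\,dy\,ds$, and the bound reduces to the integrability of $|\Delta G_{t-s}(z)|$ over $\{|z|\ge a\}$.

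The heart of the argument is to convert the Gaussian tail of the heat kernel into arbitrary polynomial decay in $a$. For the first integral, splitting $e^{-z^2/(4u)} = e^{-z^2/(8u)}\cdot e^{-z^2/(8u)}$ and using $e^{-z^2/(8u)} \le e^{-a^2/(8u)}$ on $\{|z|\ge a\}$ yields $\int_{|z|\ge a} G_u(z)\,dz \le \sqrt 2\, e^{-a^2/(8u)}$. For the second, combining the same splitting with the explicit formula $\Delta G_u(z) = \tfrac{z^2 - 2u}{4u^2}G_u(z)$ and the first two moments of the auxiliary Gaussian density $(4\pi u)^{-1/2}e^{-z^2/(8u)}$ gives $\int_{|z|\ge a} |\Delta G_u(z)|\,dz \lesssim u^{-1} e^{-a^2/(8u)}$. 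The elementary inequality $e^{-a^2/(8u)} \le C_\gamma (u/a^2)^\gamma$, valid for every $\gamma > 0$ (since $\sup_{x>0} x^\gamma e^{-x} < \infty$), then produces, after integration over $u\in(0,1]$, the two bounds $\int_0^1 \int_{|z|\ge a} G_u(z)\,dz\,du \lesssim_\gamma a^{-2\gamma}$ and $\int_0^1 \int_{|z|\ge a} |\Delta G_u(z)|\,dz\,du \lesssim_\gamma a^{-2\gamma}$.

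Plugging this back in, both $|(G\ast f)_t(\pm L)|$ and $|\partial_t(G\ast f)_t(\pm L)|$ are bounded by $C_\beta\|f\|_\infty/(L-2^n)^\beta$ for any $\beta>0$. Since $L > 2^n \ge 2$ forces $L^{1/2}\ge 1$, the factor $L^{1/2}$ on the right-hand side of the target inequality easily absorbs the first summand, while the second is of the exact claimed form. The only mildly delicate point will be the time-integrability of $u\mapsto u^{-1} e^{-a^2/(8u)}$ near $u=0$: the $u^{-1}$ singularity introduced by $\Delta G_u$ is precisely compensated by the Gaussian factor, which simultaneously produces the arbitrary polynomial rate in $a$ needed to accommodate any $\beta > 0$ in the statement.
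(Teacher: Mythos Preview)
Your proposal is correct and follows essentially the same route as the paper: both arguments exploit the spatial separation $|x-y|\ge L-2^n$ together with the Gaussian tail of the heat kernel (splitting the exponential into two factors) to convert the exponential decay into arbitrary polynomial decay $(L-2^n)^{-\beta}$, while using that decay simultaneously to integrate the time singularity $u^{-1}$ induced by $\partial_t G=\Delta G$. The only organizational difference is that you first integrate in $z$ to obtain the clean bounds $\int_{|z|\ge a}G_u\,dz\lesssim e^{-a^2/(8u)}$ and $\int_{|z|\ge a}|\Delta G_u|\,dz\lesssim u^{-1}e^{-a^2/(8u)}$ before handling the $u$-integral, whereas the paper keeps both variables together throughout; the underlying computation is the same.
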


\begin{proof}
For $x\in \{-L,L\}$, and for all $\varepsilon\in (0,1)$, $\beta>0$, one has
\begin{align*}
\big|(\partial_t (G\ast f))_t(x)\big|&\lesssim \int_0^t ds \int_{-2^n}^{2^n} dy \, |f_{t-s}(y)|e^{-\frac{(x-y)^2}{2s}} \bigg[ \frac{1}{s^{\frac32}}+\frac{(x-y)^2}{s^{\frac52}}\bigg]\\
&\lesssim \sup_{t\in [0,1]} \|f_t\|_{L^\infty(\R)}\int_0^t \frac{ds}{s^{\frac32-\frac{\beta}{2}}}\int_{-2^n}^{2^n} dy \, \frac{e^{-\varepsilon\frac{(x-y)^2}{2s}}}{|x-y|^{\beta}}\\%+\int_0^t \frac{ds}{s^{\frac52-\beta_2}}\int_{-2^n}^{2^n} dy \, \frac{e^{-\varepsilon\frac{(x-y)^2}{2s}}}{|x-y|^{2(\beta_2-1)}} \bigg]
&\lesssim \frac{1}{|L-2^n|^{\beta}}\sup_{t\in [0,1]} \|f_t\|_{L^\infty(\R)}\int_0^t \frac{ds}{s^{\frac32-\frac{\beta}{2}}}\int_{\R} dy \, e^{-\varepsilon\frac{(x-y)^2}{2s}}\\
&\lesssim \frac{1}{|L-2^n|^{\beta}}\sup_{t\in [0,1]} \|f_t\|_{L^\infty(\R)}\int_0^t \frac{ds}{s^{1-\frac{\beta}{2}}}\, .
\end{align*}
The quantity $|(G\ast f)_t(x)|$ can then be estimated along similar arguments.

\end{proof}

Based on the estimates \eqref{desired-galerkin-bound-s-t} and \eqref{appli-f-1-bis}-\eqref{appli-f-2}, it remains us to exhibit a bound for the supremum norm of the approximated fractional noise.

\begin{lemma}\label{lem:trois-le}
Fix $(H_0,H_1)\in (0,1)^2$ and for all $\ka>0$, $n\geq 1$, let $\partial_t \partial_x \widetilde{B}^{\ka,n}$ be the approximated fractional noise given by \eqref{interpol-bipara-bis}. Then almost surely, and for every $\varepsilon>0$, it holds that
\begin{equation}\label{pathwise-regu}
\sup_{t\in [0,1]} \big\|(\partial_t \partial_x \widetilde{B}^{\ka,n})_t\big\|_{L^\infty(\R)}\lesssim 2^{n(2-H_0-H_1+\varepsilon)}\, ,
\end{equation}
for some (random) proportional constant that does not depend on $\ka$ and $n$.
\end{lemma}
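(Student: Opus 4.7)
Since $(\partial_t\partial_x \widetilde{B}^{\ka,n})_t$ is piecewise constant on the grid from \eqref{grid-intro}, with value $2^{2n}\sq^n_{i,j}B^{\ka,n}$ on $[t_i,t_{i+1})\times[x_j,x_{j+1})$, the first step is the identification
\begin{equation*}
\sup_{t\in [0,1]} \big\|(\partial_t \partial_x \widetilde{B}^{\ka,n})_t\big\|_{L^\infty(\R)} = 2^{2n}\, M^{\ka,n}, \qquad M^{\ka,n} := \max_{\substack{0\leq i\leq 2^n-1 \\ -2^{2n}\leq j\leq 2^{2n}-1}} |\sq^n_{i,j}B^{\ka,n}|,
\end{equation*}
which reduces the problem to controlling the maximum of $N_n := 2 \cdot 2^{3n}$ centered Gaussian variables.

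The next step is to bound the variance of each rectangular increment uniformly in $\ka,n,i,j$. Starting from the harmonizable representation \eqref{b-bar-n}, one has
\begin{equation*}
\sq^n_{i,j}B^{\ka,n} = c_{H_0}c_{H_1}\iint_{D^{\ka,n}}\widehat{W}(d\xi,d\eta)\,\frac{(e^{\imath\xi t_{i+1}}-e^{\imath\xi t_i})\,(e^{\imath\eta x_{j+1}}-e^{\imath\eta x_j})}{|\xi|^{H_0+\frac12}\,|\eta|^{H_1+\frac12}},
\end{equation*}
so that the It\^o-type isometry together with the elementary change of variables $u=\xi/2^n$, $v=\eta/2^n$ (and the fact that removing the truncation $D^{\ka,n}$ only enlarges the integrals) yields
\begin{equation*}
\mathbb{E}|\sq^n_{i,j}B^{\ka,n}|^2 \;\leq\; c_{H_0}^2 c_{H_1}^2 \,\bigg(\int_\R \frac{|e^{\imath u}-1|^2}{|u|^{2H_0+1}}\,du\bigg)\bigg(\int_\R\frac{|e^{\imath v}-1|^2}{|v|^{2H_1+1}}\,dv\bigg)\cdot 2^{-2n(H_0+H_1)} \;=\; C\,2^{-2n(H_0+H_1)},
\end{equation*}
with $C$ independent of $\ka,n,i,j$.

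The conclusion then follows from standard Gaussian max estimates. By the tail bound for a centered Gaussian with variance at most $C\,2^{-2n(H_0+H_1)}$ and a union bound over the $N_n \lesssim 2^{3n}$ rectangles,
\begin{equation*}
\mathbb{P}\Big(M^{\ka,n} > 2^{-n(H_0+H_1-\varepsilon/2)}\Big) \;\lesssim\; 2^{3n}\exp\Big(-c\,2^{n\varepsilon}\Big),
\end{equation*}
which is summable in $n$. The Borel-Cantelli lemma then provides a random constant $C(\omega)<\infty$ a.s., independent of $n$ (and of $\ka$, thanks to the $\ka$-uniform variance bound), such that $M^{\ka,n}\leq C(\omega)\,2^{-n(H_0+H_1-\varepsilon/2)}$ for every $n\geq 1$. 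Multiplying by $2^{2n}$ and relabelling $\varepsilon$ delivers \eqref{pathwise-regu}.

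The only mild subtlety is the uniformity of the constant with respect to $\ka$: this is why the computation above is carried out by first discarding the spectral cut-off $D^{\ka,n}$, which makes the variance bound completely $\ka$-free; otherwise the argument is a routine discrete-Gaussian-maximum estimate.
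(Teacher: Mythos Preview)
Your argument is correct and takes a genuinely more elementary route than the paper. The paper proves the uniform moment bound $\mathbb{E}\big[|\sq^n_{i,j}B^{\ka,n}|^{2p}\big]\lesssim 2^{-2np(H_0+H_1)}$ (via the same harmonizable computation you carry out for $p=1$) and then invokes the multiparameter Garsia--Rodemich--Rumsey inequality of Hu--Le \cite{hu-le} to deduce pathwise H\"older regularity of the whole two-parameter sheet, from which the grid-increment bound is read off. You instead exploit that only a finite grid is involved: a variance bound, a Gaussian tail estimate, a union bound over the $O(2^{3n})$ rectangles, and Borel--Cantelli over $n$. This avoids the GRR machinery entirely and is tailored to the discrete nature of the problem; the paper's argument yields the stronger conclusion of full rectangular H\"older regularity but at the cost of citing a nontrivial external result.

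One point to tighten: your claim that the random constant is automatically $\ka$-independent ``thanks to the $\ka$-uniform variance bound'' is not justified as stated. A $\ka$-uniform probability bound does not by itself make the Borel--Cantelli null set $\ka$-independent, since the events $\{M^{\ka,n}>2^{-n(H_0+H_1-\varepsilon/2)}\}$ depend on $\ka$, which ranges over an uncountable set. (The paper's proof is no more explicit on this: its random constant is only asserted to be independent of $n,i,j$.) If genuine $\ka$-uniformity is wanted, one clean fix within your framework is to note that for fixed $n,i,j$ the map $\ka\mapsto \sq^n_{i,j}B^{\ka,n}$ is an $L^2$-bounded martingale (the truncation domains $D^{\ka,n}$ are increasing in $\ka$), so Doob's maximal inequality upgrades your variance bound to $\mathbb{E}\big[\sup_{\ka>0}|\sq^n_{i,j}B^{\ka,n}|^2\big]\lesssim 2^{-2n(H_0+H_1)}$; then run the tail/union-bound/Borel--Cantelli argument on $\sup_{\ka}M^{\ka,n}$ directly.
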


\begin{proof}
It is essentially a direct consequence of the (almost sure) regularity of the space-time fractional sheet. To be more specific, observe that for all $(s,x),(t,y)\in [0,1]\times \R$, and for every $p\geq 1$,
\begin{align*}
&\mathbb{E} \Big[ \big|B^{\ka,n}_t(y)-B^{\ka,n}_s(y)-B^{\ka,n}_t(x)+B^{\ka,n}_s(x)\big|^{2p}\Big]\\
 &\lesssim \mathbb{E} \Big[ \big|B^{\ka,n}_t(y)-B^{\ka,n}_s(y)-B^{\ka,n}_t(x)+B^{\ka,n}_s(x)\big|^{2}\Big]^p\\
&\lesssim \mathbb{E} \bigg[ \bigg|\int_{|\xi|\leq 2^{2\ka n}}\int_{|\eta| \leq 2^{\ka n}} \widehat{W}(d\xi,d\eta) \, \frac{e^{\imath \xi t}-e^{\imath \xi s}}{|\xi|^{H_0+\frac12}}\frac{e^{\imath \eta y}-e^{\imath \eta x}}{|\eta|^{H_1+\frac12}}\bigg|^{2}\bigg]^p\\
&\lesssim \bigg( \int_{\R}\int_{\R} d\xi d\eta \, \frac{|e^{\imath \xi t}-e^{\imath \xi s}|^2}{|\xi|^{2H_0+1}}\frac{|e^{\imath \eta y}-e^{\imath \eta x}|^2}{|\eta|^{2H_1+1}}\bigg)^p\\
&\lesssim \bigg( \int_{\R}\int_{\R} d\xi d\eta \, \frac{|e^{\imath \xi (t-s)}-1|^2}{|\xi|^{2H_0+1}}\frac{|e^{\imath \eta (y-x)}-1|^2}{|\eta|^{2H_1+1}}\bigg)^p \lesssim |t-s|^{2H_0 p} |y-x|^{2H_1p} \, ,
\end{align*}
where the proportional constants do not depend on $(s,x),(t,y)$, $\ka$ and $n$. Therefore, we can apply for instance \cite[Theorem 3.1]{hu-le} to assert that almost surely, and for all $n\geq 1$, $i,j\in \mathbb{Z}$, $\varepsilon>0$,
$$\big|\! \sq_{ij}^n \! \! B^{\ka,n} \big| \lesssim 2^{-n(H_0+H_1-\varepsilon)} \, ,$$
for some (random) proportional constant that does not depend on $n$ and $i,j$. The claimed estimate \eqref{pathwise-regu} is then an immediate consequence of the definition \eqref{interpol-bipara-bis} of $\partial_t \partial_x \widetilde{B}^{\ka,n}$.

\end{proof}

With the represention
\begin{equation}\label{expansion-noise}
(\partial_t \partial_x \widetilde{B}^{\ka,n})_s(x)=\sum_{\ell=-2^{2n}}^{2^{2n}-1} \bigg(\sum_{k=0}^{2^n-1} 2^{2n}(\sq_{k\ell}^n B^{\ka,n}) \1_{[\frac{k}{2^n},\frac{k+1}{2^n}[}(s)\bigg) \1_{[\frac{\ell}{2^n},\frac{\ell+1}{2^n}[}(x)
\end{equation}
in mind, we can inject the estimates of Lemma \ref{lem:firs-le}, Lemma \ref{lem:sec-le} and Lemma \ref{lem:trois-le} into the result of Proposition \ref{prop:gene-galer-s-t}, which gives successively (for $(h,L)$ such that $\frac{L}{h}\in \mathbb{N}$)

\begin{align}
&\sup_{i=0,\ldots,2^m}\big\| \rho \cdot \big\{ \widetilde{\<Psi>}^{\ka,n}_{t_i}- \cg^{(h,L)}_m\big(\widetilde{\<Psi>}^{\ka,n}\big)_{t_i} \big\} \big\|_{L^2(\R)} \nonumber\\
&\lesssim  2^{-m(1-\varepsilon)} \sup_{t\in [0,1]} \|(\partial_t \widetilde{\<Psi>}^{\ka,n})_t\|_{L^2(\R)}+2^{m\varepsilon}h^2 \sup_{t\in [0,1]}\|\Delta \widetilde{\<Psi>}^{\ka,n}_t\|_{L^2(\R)}\nonumber\\
&\hspace{5cm}+\sup_{t\in [0,1]} \sup_{x\in \partial\Omega_L}\Big\{ |\widetilde{\<Psi>}^{\ka,n}_t(x)|+L^{1/2} \big|(\partial_t \widetilde{\<Psi>}^{\ka,n})_t(x)\big|\Big\}\nonumber\\
&\lesssim \bigg[2^{-m(1-\varepsilon)}  2^{n(\frac32+\varepsilon)}+2^{m\varepsilon}h^2 2^{n(\frac32+\varepsilon)}+\frac{L^{1/2}}{|L-2^n|^\be}\bigg] \cdot \sup_{t\in [0,1]} \|(\partial_t \partial_x \widetilde{B}^{\ka,n})_t\|_{L^\infty(\R)}\nonumber\\
&\lesssim \bigg[2^{-m(1-\varepsilon)}  2^{n(\frac32+\varepsilon)}+2^{m\varepsilon}h^2 2^{n(\frac32+\varepsilon)}+\frac{L^{1/2}}{|L-2^n|^\be}\bigg] \cdot 2^{n(2-H_0-H_1+\varepsilon)} \, ,\label{forthcoming-estimate}
\end{align}
which corresponds to the main estimate of this section.

\begin{proposition}\label{prop:discreti-s-t}
Fix $(H_0,H_1)\in (0,1)^2$ and for all $\ka>0$, $n\geq 1$, let $\widetilde{\<Psi>}^{\ka,n}$ be the solution to the equation \eqref{start-equation-space-discret}, driven by the approximated fractional noise $\partial_t \partial_x \widetilde{B}^{\ka,n}$. Also, fix a smooth compactly-supported function $\rho:\R\to [0,1]$.

\smallskip

Then for all $h>0$, $L>2^n$ such that $\frac{L}{h}\in \mathbb{N}$, for all $\ka>0$, $n\geq 1$, $m\geq 1$, and for all $\beta >0$, $\varepsilon\in (0,1)$, one has almost surely
\begin{align}
&\sup_{i=0,\ldots,2^m}\big\| \rho \cdot \big\{ \widetilde{\<Psi>}^{\ka,n}_{t_i}- \cg^{(h,L)}_m\big(\widetilde{\<Psi>}^{\ka,n}\big)_{t_i} \big\} \big\|_{L^2(\R)}\nonumber\\
&\lesssim  2^{-m(1-\varepsilon)}2^{n(\frac72-H_0-H_1+\varepsilon)}+  2^{m\varepsilon}h^{2}2^{n(\frac72-H_0-H_1+\varepsilon)}+\frac{L^{1/2}}{|L-2^n|^\be}2^{n(2-H_0-H_1+\varepsilon)} \, ,\label{speed-conv-2}
\end{align}
where the proportional constant does not depend on $\ka$, $n$, $h$, $L$ and $m$.

\smallskip

Moreover, if $h:=2^{-2n}$, $L:=2^{n+1}$ and $m:=4n$, then $\cg^{(h,L)}_m\big(\widetilde{\<Psi>}^{\ka,n}\big)=\bar{\<Psi>}^{\ka,n}$ (where we recall that $\bar{\<Psi>}^{\ka,n}$ stands for the approximation process described in Section \ref{subsec:descript-scheme}), and accordingly, for every $(H_0,H_1)\in (0,1)^2$, one has almost surely
\begin{equation}\label{appli-calib}
\sup_{i=0,\ldots,2^m}\big\| \rho \cdot \big\{ \widetilde{\<Psi>}^{\ka,n}_{t_i}-\bar{\<Psi>}^{\ka,n}_{t_i} \big\} \big\|_{L^2(\R)} \lesssim 2^{-\frac{n}{2}} \, ,
\end{equation}
where the proportional constant does not depend on $\ka$ and $n$.
\end{proposition}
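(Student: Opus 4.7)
The plan is to derive \eqref{speed-conv-2} directly from Proposition \ref{prop:gene-galer-s-t} applied to $u=\widetilde{\<Psi>}^{\ka,n}=G\ast\partial_t\partial_x\widetilde{B}^{\ka,n}$, and then to specialize the calibration to obtain \eqref{appli-calib}. Since $(\partial_t-\Delta)u=\partial_t\partial_x\widetilde{B}^{\ka,n}$ by \eqref{equa-luxotilde-ka-n}, every quantity on the right-hand side of \eqref{desired-galerkin-bound-s-t} can be controlled by norms of the discretized noise. More precisely, for each fixed $s\in[0,1]$ the function $(\partial_t\partial_x\widetilde{B}^{\ka,n})_s$ is piecewise constant in $x$ of the form treated in Lemma \ref{lem:firs-le} with $K=2^{2n}$ and $M=2^n$, yielding $\sup_{t}(\|(\partial_t u)_t\|_{L^2(\R)}+\|\Delta u_t\|_{L^2(\R)})\lesssim 2^{n(3/2+\varepsilon)}\sup_t\|(\partial_t\partial_x\widetilde{B}^{\ka,n})_t\|_{L^\infty}$. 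For the boundary terms, the support property $\bigcup_t\mathrm{supp}\,(\partial_t\partial_x\widetilde{B}^{\ka,n})_t\subset[-2^n,2^n]$ (visible in \eqref{expansion-noise}) puts us in the setting of Lemma \ref{lem:sec-le}, providing the factor $L^{1/2}/|L-2^n|^\beta$. Finally, Lemma \ref{lem:trois-le} supplies the almost sure pathwise bound $\sup_t\|(\partial_t\partial_x\widetilde{B}^{\ka,n})_t\|_{L^\infty}\lesssim 2^{n(2-H_0-H_1+\varepsilon)}$, and assembling these three ingredients in \eqref{desired-galerkin-bound-s-t} yields exactly \eqref{speed-conv-2}.

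The second part requires two verifications: the algebraic identification $\cg^{(h,L)}_m(\widetilde{\<Psi>}^{\ka,n})=\bar{\<Psi>}^{\ka,n}$ under the calibration $h=2^{-2n}$, $L=2^{n+1}$, $m=4n$, and a routine exponent count showing that \eqref{speed-conv-2} then collapses to $2^{-n/2}$. For the identification, I would check in turn: (i) that $\Phi^h_j=\Phi^n_j$, that the subdivision $D_m$ is $\{t_i=i/2^{4n}\}$, and that $N=L/h=2^{3n+1}$ matches the range of $j$ in \eqref{defi-psi-ka-n}; (ii) that the mass and stiffness matrices $\ca_{h,L},\cb_{h,L}$ are tridiagonal with respective diagonal/off-diagonal entries $(\frac{2h}{3},\frac{h}{6})$ and $(\frac{2}{h},-\frac{1}{h})$, so that substituting the calibration into \eqref{equation-bbu} and dividing through by $\frac{3}{2}$ reproduces the linear combinations on the left-hand side of \eqref{defi-psi-black}, with the boundary cases $j=\pm(N-1)$ reflecting the truncation of the basis on $\partial\Omega_L$; and (iii) that the right-hand side $2^{4n}\int_{t_{i-1}}^{t_i}\langle(\partial_t\partial_x\widetilde{B}^{\ka,n})_s,\Phi^h_j\rangle\,ds$ reduces to $\delta B^{\ka,n}_{ij}$ as defined in \eqref{involvement-b-ka-n-scheme}. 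The latter computation uses that for $s\in[t_{i-1},t_i]$ one has $s\in[\tilde i/2^n,(\tilde i+1)/2^n)$, so the noise equals $2^{2n}\sq^n_{\tilde i,\ell}B^{\ka,n}$ on the cell $[\ell/2^n,(\ell+1)/2^n)$; pairing with the tent $\Phi^n_j$ (of support $[x_{j-1},x_{j+1}]$ of width $2\cdot 2^{-2n}$) produces the two cases in \eqref{involvement-b-ka-n-scheme}, precisely according to whether $x_j$ sits strictly inside a noise cell or on its boundary (so that the tent straddles two adjacent cells and splits symmetrically).

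Once the identification is in place, \eqref{appli-calib} reduces to bookkeeping. With $h=2^{-2n}$ and $m=4n$ the first two terms of \eqref{speed-conv-2} both have exponent $n(-\tfrac12-H_0-H_1+5\varepsilon)$, which is bounded by $-n/2$ once $\varepsilon<(H_0+H_1)/5$. With $L=2^{n+1}$ one has $|L-2^n|=2^n$, so the boundary factor is $2^{1/2}\cdot 2^{n(5/2-H_0-H_1+\varepsilon-\beta)}$; the freedom to pick $\beta>0$ arbitrarily large in Lemma \ref{lem:sec-le} (say $\beta=3$) makes this term $\leq 2^{-n/2}$ as well.

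The only genuinely non-routine step is the explicit algebraic matching between the generic recursion \eqref{equation-bbu} and the three-case system \eqref{defi-psi-black}-\eqref{involvement-b-ka-n-scheme}; once that book-keeping is done, the convergence rate itself follows immediately from plugging the calibration into the general bound. Everything else — the pathwise regularity input in Lemma \ref{lem:trois-le}, the Fourier-type estimates in Lemma \ref{lem:firs-le}, and the Gaussian-decay estimate in Lemma \ref{lem:sec-le} — has already been isolated in the preparatory lemmas, so no further analytic input is required.
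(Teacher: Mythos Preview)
Your proposal is correct and follows exactly the paper's route: \eqref{speed-conv-2} is obtained by injecting Lemmas \ref{lem:firs-le}--\ref{lem:trois-le} (with $K=2^{2n}$, $M=2^n$ and support $[-2^n,2^n]$) into the general Galerkin bound of Proposition \ref{prop:gene-galer-s-t}, and the identification $\cg^{(h,L)}_m(\widetilde{\<Psi>}^{\ka,n})=\bar{\<Psi>}^{\ka,n}$ is carried out by computing $\ca_{h,L},\cb_{h,L}$ explicitly and reducing the noise pairing to \eqref{involvement-b-ka-n-scheme}, exactly as you describe. One small slip: the common factor by which you divide \eqref{equation-bbu} in step~(ii) is $\tfrac{2^{2n+1}}{3}$ (i.e.\ $2^m\cdot\tfrac{2h}{3}$), not $\tfrac{3}{2}$---this is what produces both the $(4,-\tfrac54,1,\tfrac14)$ coefficients and the $\tfrac{3}{2^{2n+1}}$ prefactor on $\delta B^{\ka,n}_{ij}$.
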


\smallskip

\begin{remark}
The estimate \eqref{speed-conv-2} emphasizes a standard feature of the finite-element method (when applied in a parabolic setting), namely the fact that the time mesh size (i.e. $2^{-m}$) must somehow be considered at the same level as the square of the spatial mesh size (i.e. $h^2$) in the discretization procedure. 
\end{remark}

\smallskip

\begin{proof}[Proof of Proposition \ref{prop:discreti-s-t}]
We have already shown the estimate \eqref{speed-conv-2} (see \eqref{forthcoming-estimate}), and therefore it only remains us to check that for the particular choice $h:=2^{-2n}$, $L:=2^{n+1}$ and $m:=4n$, one can identify $\cg^{(h,L)}_m\big(\widetilde{\<Psi>}^{\ka,n}\big)$ with the process $\bar{\<Psi>}^{\ka,n}$ described by relations \eqref{defi-psi-black}.

\smallskip

To this end, recall first that, following the definition of the operator $\cg^{(h,L)}_m$, we can expand $\cg^{(h,L)}_m\big(\widetilde{\<Psi>}^{\ka,n}\big)_{t_i}$ ($i=0,\ldots,2^{4n}$) as $\cg^{(h,L)}_m\big(\widetilde{\<Psi>}^{\ka,n}\big)_{t_i}=\sum_{j=-N+1}^{N-1}\bu^j_{t_i}  \Phi^n_j$, with $N:=\frac{L}{h}=2^{3n+1}$, and where the vector $(\bu_{t_i}^j)$ is given by the following rule (see \eqref{equation-bbu}): $\bu_{0}=0$ and for $i=0,\ldots,2^{4n}-1$,
\begin{equation}\label{identif-u-1}
\big[\ca_{h,L}+2^{-4n}\cb_{h,L}\big] \bu_{t_{i+1}}=\ca_{h,L}\bu_{t_{i}}+\int_{t_{i}}^{t_{i+1}} ds \, \langle (\partial_t \partial_x \widetilde{B}^{\ka,n})_s,\Phi^n\rangle \, .
\end{equation}
Observe that in this situation
$$(\ca_{h,L})_{j,k}=\langle \Phi^n_j,\Phi^n_k\rangle =\frac{2^{-2n+1}}{3}\bigg[\1_{\{j=k\}}+\frac14\1_{\{|j-k|=1\}}\bigg]$$
and
$$(\cb_{h,L})_{j,k}=\langle \nabla\Phi^n_j,\nabla\Phi^n_k\rangle =2^{2n+1}\bigg[\1_{\{j=k\}}-\frac12\1_{\{|j-k|=1\}}\bigg] \, .$$
Therefore, for every $j=-N+1,\ldots,N-1$,
\begin{equation}\label{identif-u-2}
\big(\big[\ca_{h,L}+2^{-4n}\cb_{h,L}\big] \bu_{t_{i+1}}\big)^j=\frac{2^{-2n+1}}{3}\sum_{k=-N+1}^{N-1}\bigg[4\bu_{t_{i+1}}^k\1_{\{j=k\}}-\frac{5}{4}\bu_{t_{i+1}}^k\1_{\{|j-k|=1\}}\bigg] 
\end{equation}
and
\begin{equation}\label{identif-u-3}
\big(\ca_{h,L}\bu_{t_{i}}\big)^j=\frac{2^{-2n+1}}{3}\sum_{k=-N+1}^{N-1}\bigg[\bu_{t_{i}}^k\1_{\{j=k\}}+\frac{1}{4}\bu_{t_{i}}^k\1_{\{|j-k|=1\}}\bigg]
\end{equation}
On the other hand, using the convention $\tilde{i},\dbtilde{j}$ introduced in Section \ref{subsec:descript-scheme}, and with expression \eqref{expansion-noise} in mind, one can write, for every $j=-N+1,\ldots,N-1$,
\begin{align}
&\int_{t_{i}}^{t_{i+1}} ds \, \langle (\partial_t \partial_x \widetilde{B}^{\ka,n})_s,\Phi^n_j\rangle\nonumber\\
&=\sum_{\ell=-2^{2n}}^{2^{2n}-1} \sum_{k=0}^{2^n-1} 2^{2n}(\sq_{k\ell}^n B^{\ka,n}) \int_{t_{i}}^{t_{i+1}} ds \, \1_{[\frac{k}{2^n},\frac{k+1}{2^n}[}(s) \int_{x_{j-1}}^{x_{j+1}} dx \, \1_{[\frac{\ell}{2^n},\frac{\ell+1}{2^n}[}(x) \Phi^n_j(x) \nonumber\\
&=2^{-2n}\sum_{\ell=-2^{2n}}^{2^{2n}-1}\sq^n_{\tilde{i},\ell}B^{\ka,n}\bigg[\1_{\{x_j>\frac{\dbtilde{j}}{2^n}\}}\int_{x_{j-1}}^{x_{j+1}} dx \, \1_{[\frac{\ell}{2^n},\frac{\ell+1}{2^n}[}(x) \Phi^n_j(x)+\1_{\{x_j=\frac{\dbtilde{j}}{2^n}\}}\int_{x_{j-1}}^{x_{j+1}} dx \, \1_{[\frac{\ell}{2^n},\frac{\ell+1}{2^n}[}(x) \Phi^n_j(x)\bigg] \, .\label{appli-sch}
\end{align}
Now
\begin{align*}
\sum_{\ell=-2^{2n}}^{2^{2n}-1}\sq^n_{\tilde{i},\ell}B^{\ka,n}\1_{\{x_j>\frac{\dbtilde{j}}{2^n}\}}\int_{x_{j-1}}^{x_{j+1}} dx \, \1_{[\frac{\ell}{2^n},\frac{\ell+1}{2^n}[}(x) \Phi^n_j(x)&=\1_{\{x_j>\frac{\dbtilde{j}}{2^n}\}}\sq^n_{\tilde{i},\dbtilde{j}}B^{\ka,n}\int_{x_{j-1}}^{x_{j+1}} dx \, \Phi^n_j(x)\\
&=\1_{\{x_j>\frac{\dbtilde{j}}{2^n}\}}2^{-2n}\sq^n_{\tilde{i},\dbtilde{j}}B^{\ka,n}\, ,
\end{align*}
while
\begin{align*}
&\sum_{\ell=-2^{2n}}^{2^{2n}-1}\sq^n_{\tilde{i},\ell}B^{\ka,n}\1_{\{x_j=\frac{\dbtilde{j}}{2^n}\}}\int_{x_{j-1}}^{x_{j+1}} dx \, \1_{[\frac{\ell}{2^n},\frac{\ell+1}{2^n}[}(x) \Phi^n_j(x)\\
&=\1_{\{x_j=\frac{\dbtilde{j}}{2^n}\}} \bigg[\sq^n_{\tilde{i},\dbtilde{j}-1}\!\! B^{\ka,n} \int_{x_{j-1}}^{x_{j}} dx \,  \Phi^n_j(x)+\sq^n_{\tilde{i},\dbtilde{j}}B^{\ka,n} \int_{x_{j}}^{x_{j+1}} dx \, \Phi^n_j(x)\bigg]\\
&=\1_{\{x_j=\frac{\dbtilde{j}}{2^n}\}} 2^{-2n}\bigg[\frac12\sq^n_{\tilde{i},\dbtilde{j}-1}\!\! B^{\ka,n}+\frac12\sq^n_{\tilde{i},\dbtilde{j}}\! B^{\ka,n} \bigg]\, .
\end{align*}
so that, going back to \eqref{appli-sch}, we have
\begin{equation}\label{identif-u-4}
\int_{t_{i}}^{t_{i+1}} ds \, \langle (\partial_t \partial_x \widetilde{B}^{\ka,n})_s,\Phi^n_j\rangle=\1_{\{x_j>\frac{\dbtilde{j}}{2^n}\}}2^{-4n}\sq_{\tilde{i},\dbtilde{j}}^n\! B^{\ka,n} +\1_{\{x_j=\frac{\dbtilde{j}}{2^n}\}}2^{-4n}\bigg[\frac12 \sq_{\tilde{i},\dbtilde{j}-1}^n\!\! B^{\ka,n}+\frac12 \sq_{\tilde{i},\dbtilde{j}}^n\! B^{\ka,n} \bigg] \, .
\end{equation}
Injecting \eqref{identif-u-2}, \eqref{identif-u-3} and \eqref{identif-u-4} into relation \eqref{identif-u-1}, we immediately see that the points $\bu_{t_i}^j$ obey the same iteration rule as the points $\bar{\<Psiblack>}^j_{t_i}$ in \eqref{defi-psi-black}. Therefore, $\bu_{t_i}^j=\bar{\<Psiblack>}^j_{t_i}$ and this achieves to prove the desired identity $\cg^{(h,L)}_m\big(\widetilde{\<Psi>}^{\ka,n}\big)=\bar{\<Psi>}^{\ka,n}$.

\end{proof}

\smallskip

Let us conclude this theoretical analysis with two remarks about our calibration choice $(h,L,m)$ in \eqref{appli-calib} (and therefore in the scheme of Section \ref{subsec:descript-scheme}, leading to the definition of $\bar{\<Psi>}^{\ka,n}$).

\smallskip

\begin{remark}\label{rk:non-optimal-choice}
Observe that in light of \eqref{speed-conv-2}, the calibration $(h,L,m):=(2^{-2n},2^{n+1},4n)$ ensures the convergence of the scheme for all possible values of $(H_0,H_1)\in (0,1)^2$, which, for the sake of clarity, motivated our choice in the scheme proposed in Section \ref{subsec:descript-scheme}. However, for \emph{fixed} $(H_0,H_1)$, one could naturally choose more optimal values for $(h,L,m)$, which would possibly reduce the number of computations in the associated system \eqref{defi-psi-black}.
\end{remark}

\smallskip

\begin{remark}\label{rk:l-2-h}
Due to our use of a Galerkin-type approximation procedure (and Galerkin-type bounding arguments), the estimate result in Proposition \ref{prop:discreti-s-t} is stated in terms of $L^2(\R)$-topology in space, in contrast with the weaker $\ch^{-\al}(\R)$-norm used in Proposition \ref{sto} and Proposition \ref{prop:discreti-noise}, and which is more natural in this rough setting (remember that the solution $\<Psi>$ is a path with values in $\ch^{-\al}(\R)$). By considering the $\ch^{-\al}(\R)$-norm in the left-hand side of \eqref{speed-conv-2}, it might be possible to improve the latter estimate with respect to the parameters $h$, $m$ or $n$ (without changing the topology in our main control \eqref{main-loose-estimate}). In turn, this could allow us to relax the current $(h,L,m)$-calibration of the scheme in Section \ref{subsec:descript-scheme}. Nevertheless, at this point, it is not clear to us how one could adapt the successive arguments of Section \ref{sec:space-discret} in order to - sharply - take negative-order Sobolev norms into account.
\end{remark}

\section{Numerical results and possible improvements}\label{sec:numerical}

We devote this last section to a few details and comments related to the numerical implementation of the algorithm \eqref{defi-psi-ka-n}-\eqref{defi-psi-black}. 

\subsection{Simulation of the scheme}

As described in Section \ref{subsec:descript-scheme}, the simulation of our discretized process $\<Psi>^{\ka,n}$ boils down to the computation of the values $\<Psiblack>_{t_i}^{j}$, along the iterative formula \eqref{defi-psi-black}. As far as randomness is concerned, we are thus left with the implementation of the quantities 
\begin{equation}\label{galerkin-grid}
\delta B^{\ka,n}_{ij}, \quad i=0,\ldots,2^{4n}, \ j=-N+1,\ldots,N-1\, , \ N:=2^{3n+1}.
\end{equation}

\smallskip

To this end, let us briefly recall that any Gaussian sheet can be easily simulated through its mean and covariance formulas. To be more specific, let us fix $t_1<\ldots<t_p$, $x_1<\ldots<x_q$, $p,q\geq 1$, and consider a centered Gaussian field $\{X_t(x), \, t,x\in \R\}$ with covariance given by 
$$\mathbb{E}[X_s(x)X_t(y)]=C_0(s,t)C_1(x,y) \, .$$
Then define the matrices $\mathtt{C_0},\mathtt{C_1}$ along the formulas $\mathtt{C_0}(i,i'):=C_0(t_i,t_{i'})$, $\mathtt{C_1}(j,j'):=C_1(x_j,x_{j'})$, and consider auxiliary symmetric matrices $\mathtt{D_0},\mathtt{D_1}$ such that $\mathtt{D_0}^2=\mathtt{C_0}$ and $\mathtt{D_1}^2=\mathtt{C_1}$ (in the subsequent implementations, $\mathtt{D_0},\mathtt{D_1}$ are computed with the help of the \texttt{sqrtm} function). Now, if $\mathtt{W}$ stands for a random matrix in $\mathcal{M}_{p\times q}(\mathbb{R})$ with independent and $\cn(0,1)$-distributed entries, we set 
\begin{equation}\label{simu-gaussian-sheet}
\mathtt{X}:=\mathtt{D_0}*\mathtt{W}*\mathtt{D_1}, 
\end{equation}
so that, for all $1\leq i,i'\leq p$ and $1\leq j,j'\leq q$,
\begin{align*}
&\mathbb{E}\big[ \mathtt{X}(i,j)\mathtt{X}(i',j')\big]=\sum_{k,k'=1}^p \sum_{\ell,\ell'=1}^q \mathtt{D_0}(i,k)\mathtt{D_0}(i',k')\mathtt{D_1}(\ell,j)\mathtt{D_0}(\ell',j') \mathbb{E}\big[\mathtt{W}(k,\ell) \mathtt{W}(k',\ell') \big]\\
&\hspace{2cm}=\sum_{k=1}^p \mathtt{D_0}(i,k)\mathtt{D_0}(k,i')\sum_{\ell=1}^q \mathtt{D_1}(j,\ell)\mathtt{D_0}(\ell,j') =\mathtt{C_0}(i,i') \mathtt{C_1}(j,j')= \mathbb{E}\big[X_{t_i}(x_j)X_{t_{i'}}(x_{j'})\big] \, .
\end{align*}

\

With this general strategy in mind, let us go back to our approximation $B^{\ka,n}$ of the fractional sheet $B$, for fixed Hurst indexes $H_0,H_1\in (0,1)$. According to the representation \eqref{b-bar-n}, $B^{\ka,n}$ corresponds indeed to a centered Gaussian field with covariance of the form 
\begin{equation}\label{cova-b-ka-n}
\mathbb{E}\big[B^{\ka,n}_s(x)B^{\ka,n}_t(y)\big]=C_0^{\kappa ,n}(s,t) C_1^{\kappa ,n}(x,y),
\end{equation}
with
$$C_0^{\kappa ,n}(s,t) := c_{H_0}^2\int_{|\xi |\leq 2^{2\kappa n}}d\xi  \,
\frac{(e^{i\xi t}-1)(e^{-i\xi s}-1)}{|\xi |^{2H_0+1}},\quad C_1^{\kappa ,n}(x,y):= c_{H_1}^2\int_{   |\eta |\leq 2^{\kappa n}}  d\eta\,
\frac{(e^{i\eta x}-1)(e^{-i\eta y}-1)}{|\eta |^{2H_1+1}} \, .$$
Note that the latter integrals can be more conveniently expanded as 
\begin{align*}
C_0^{\kappa ,n}(s,t)=&c_{H_0}^2\int_{|\xi |\leq 2^{2\kappa n}}d\xi  \,
\frac{\cos(\xi (t-s))-\cos(\xi t)-\cos(\xi s)+1}{|\xi |^{2H_0+1}}\\
=&c_{H_0}^2\, 2^{1-{4H_0\kappa n}}\int_0^{ 1}d\xi \,
\frac{\cos(2^{2\kappa n}\xi (t-s))-\cos(2^{2\kappa n}\xi t)-\cos(2^{2\kappa n}\xi s)+1}{|\xi |^{2H_0+1}},
\end{align*}
with a similar expression for $C_1^{\kappa ,n}(x,y)$, which in turn allows us to approximate $C_0^{\kappa ,n},C_1^{\kappa ,n}$ through a standard Riemann-sum procedure, i.e. as
\begin{align*}
C_{0}^{\kappa ,n}(s,t)\approx c_{H_0}^2\frac{2^{1-{4H_0\kappa n}}}{M_0}\sum_{m=1}^{M_0}  
\frac{\cos(2^{2\kappa n}\frac{m}{M_0} (t-s))-\cos(2^{2\kappa n}\frac{m}{M_0} t)-\cos(2^{2\kappa n}\frac{m}{M_0} s)+1}{|\frac{m}{M_0} |^{2H_0+1}}\, ,
\end{align*}
\begin{align*}
C_{1}^{\kappa ,n}(x,y)\approx c_{H_1}^2\frac{2^{1-{2H_1\kappa n}}}{M_1}\sum_{m=1}^{M_1}  
\frac{\cos(2^{\kappa n}\frac{m}{M_1} (x-y))-\cos(2^{\kappa n}\frac{m}{M_1} x)-\cos(2^{\kappa n}\frac{m}{M_1} y)+1}{|\frac{m}{M_1} |^{2H_1+1}}\, ,
\end{align*}
with $M_0,M_1$ large enough. 

\

As a consequence of decomposition \eqref{cova-b-ka-n}, the values of $B^{\ka,n}_{\frac{i}{2^n}}(\frac{j}{2^n})$ can now be easily simulated through the above-described method, i.e. using \eqref{simu-gaussian-sheet}, which immedialy provides us with the set of increments 
\begin{equation}\label{noise-grid}
\sq^n_{ij}\!\! B^{\ka,n}, \quad i=0,\ldots,2^n, \ j=-2^{2n},\ldots,2^{2n},
\end{equation}
involved in the scheme. Observe that, following \eqref{involvement-b-ka-n-scheme}, each quantity $\delta B_{i,j}^{\kappa ,n}$ in \eqref{defi-psi-black} is in fact computed from the pair $(\sq^n_{\tilde{i},\dbtilde{j}} B^{\ka,n},\sq^n_{\tilde{i},\dbtilde{j}-1} B^{\ka,n})$, where $\tilde{i}:=\lfloor i2^{-3n} \rfloor$ and $\dbtilde{j}:=\lfloor j2^{-n} \rfloor$.

\

Once endowed with the (renormalized) quantities $\beta (j,i):=\frac{3}{2^{2n+1}} \delta B_{i,j}^{\kappa ,n}$, the simulation of \eqref{defi-psi-black} merely relies on the consideration of the two matrices
\begin{align*}
\mathtt{A_1}:=\begin{pmatrix}
4 & -\frac{5}{4} & 0 & \cdots & 0 \\ 
-\frac{5}{4} & 4 & \ddots & \ddots & \vdots \\ 
0 & \ddots & \ddots & \ddots & 0 \\ 
\vdots & \ddots & \ddots & 4 & -\frac{5}{4} \\ 
0 & \cdots & 0 & -\frac{5}{4} & 4
\end{pmatrix}\quad
\textrm{ and }
\quad
\mathtt{A_2}:=\begin{pmatrix}
1 & \frac{1}{4} & 0 & \cdots & 0 \\ 
\frac{1}{4} & 1 & \ddots & \ddots & \vdots \\ 
0 & \ddots & \ddots & \ddots & 0 \\ 
\vdots & \ddots & \ddots & 1 & \frac{1}{4} \\ 
0 & \cdots & 0 & \frac{1}{4} & 1
\end{pmatrix} .
\end{align*} 
Namely, setting $\phi (j,i):=\<Psiblack>_{t_i}^{j}$, formula \eqref{defi-psi-black} can be more efficiently recast into the iterative scheme
\begin{align}\label{simul-phi}
\mathtt{A_1}\phi(\cdot , 1)=\beta (\cdot , 1), \quad \mathtt{A_1}\phi(\cdot , i+1)=\mathtt{A_2}\phi(\cdot , i)+\beta (\cdot , i) \quad \text{for all} \ i>1,
\end{align}  
the implementation of which becomes an easy task (see for instance Figure \ref{Fig2} for a simulation with $n=3$, $\kappa=1$, $M_0=10000$, $M_1=1000$ and $H_0=H_1=\frac14$).

\begin{figure}[htbp]
  \centering
  \includegraphics[scale=0.25]{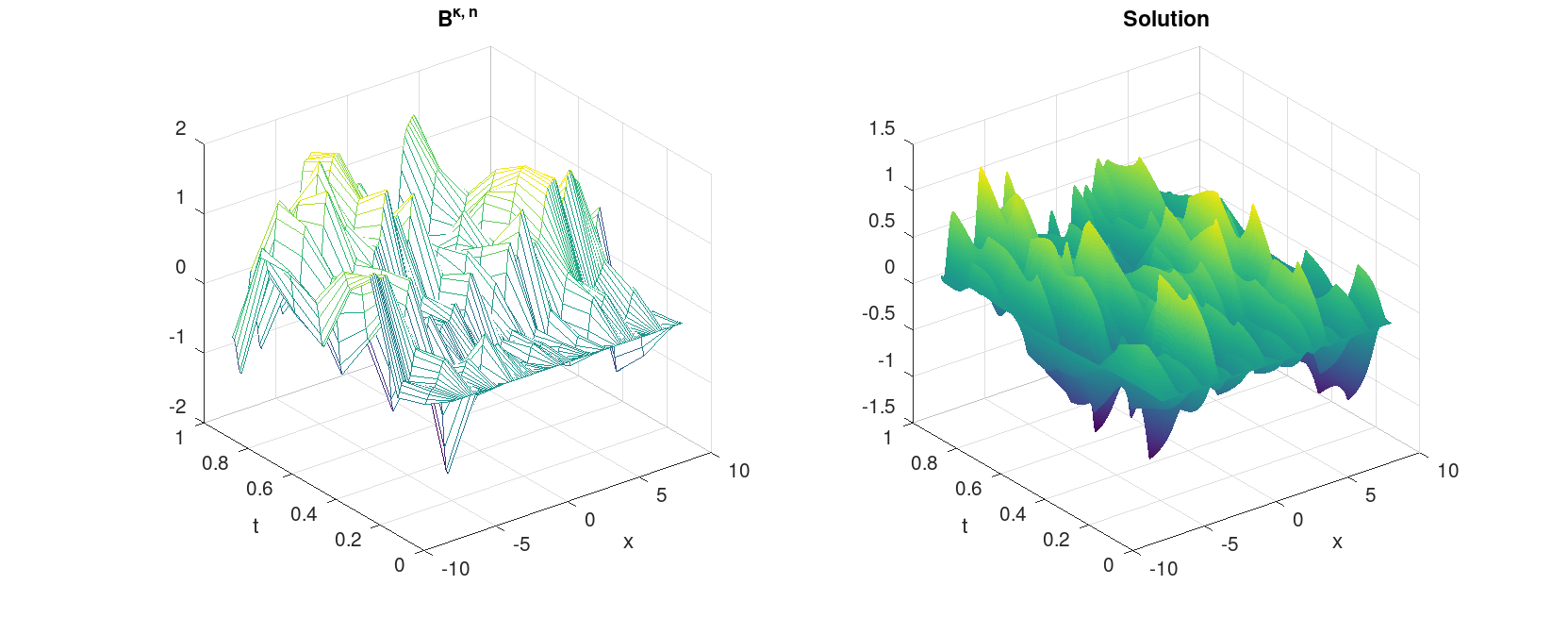}
   \caption{$H_0=H_1=\frac14$}
\label{Fig2}
  \end{figure}

\

\begin{remark}
We cannot hide the fact that, due to our consideration of a grid with extremely fine mesh (namely $2^{-4n}$ in time, $2^{-3n+1}$ in space) and growing support (in space), the simulation of the above scheme soon reveals to be highly demanding as $n$ increases, and we have actually not been able to implement the algorithm for $n\geq 4$. As a consequence of this computational restriction, the process $B^{\ka,n}$, with $\ka> 0$ small enough (following the result of Theorem \ref{main-theo}) and $1\leq n\leq 3$, can only be seen as a coarse approximation of $B$, which explains the relative smoothness of the simulated sheets in Figure \ref{Fig2}. 

\end{remark}

\subsection{Open issues}

Let us conclude the study with two natural open questions raised by the above simulation procedure, and which could motivate possible future improvements of our theoretical results.

\

\noindent
\textit{Replacing $B^{\ka,n}$ with $B$.} 
As we emphasized it in Remark \ref{rk:ka}, our restriction on the parameter $\ka$ (in $B^{\ka,n}$) plays an important technical role in the proof of the convergence property \eqref{speed-converg-1}, but we cannot firmly assert that the result of Theorem \ref{main-theo} (or some similar convergence statement toward $\<Psi>$) would fail for larger values of $\ka>0$, or even for $\ka=\infty$, which corresponds to replacing $B^{\ka,n}$ with $B$ in the algorithm.

\smallskip

Figure \ref{Fig3} corresponds to a simulation of such a modified scheme (where $\ka=\infty$ , $H_0=H_1=\frac14$ and $n=3$), and thus the resulting sheet might represent a more faithful approximation of $\<Psi>$.

\begin{figure}[htbp]
  \centering
  \includegraphics[scale=0.25]{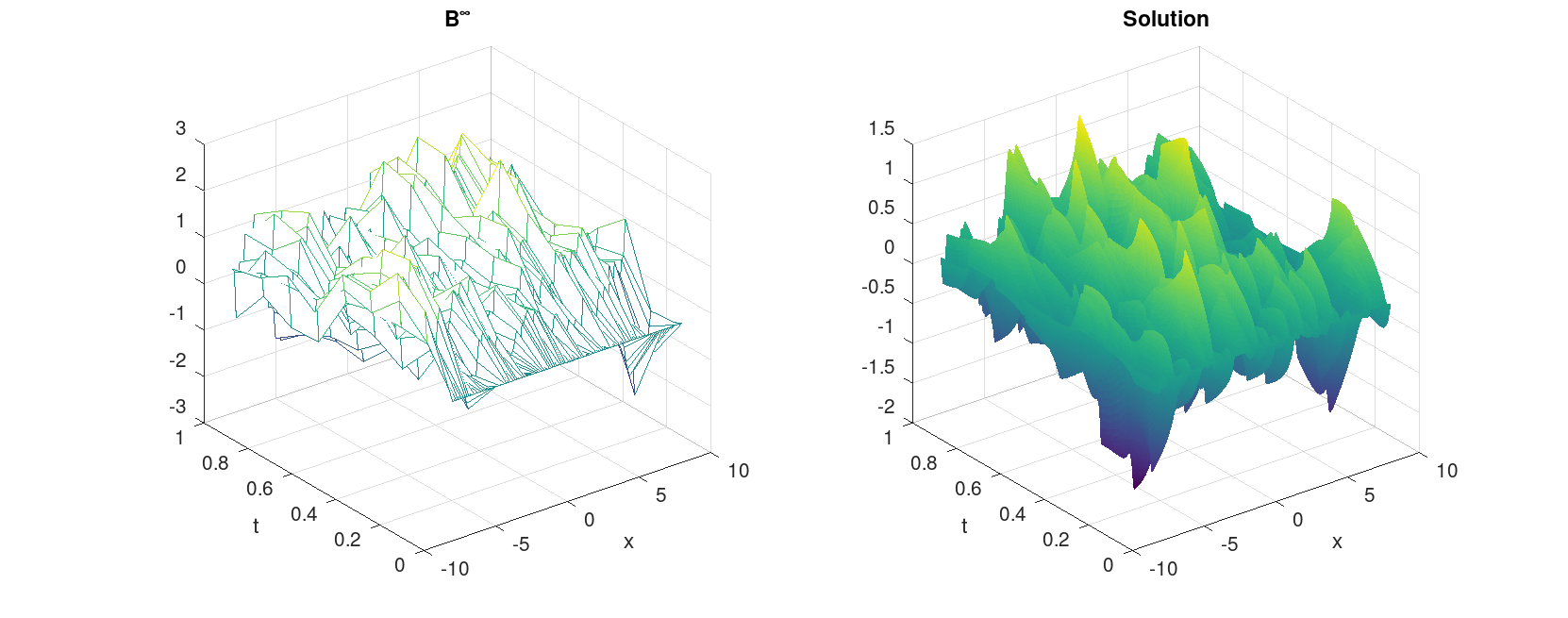}
   \caption{$H_0=H_1=\frac14$}
\label{Fig3}
  \end{figure}

\

\

\noindent
\textit{Grid synchronization.} Another natural question arising from our scheme is to know whether the convergence in Theorem \ref{main-theo} (or some similar property) would hold if one discretized the noise and the heat operator \emph{over the same grid} (say $t_i=\frac{i}{2^n},x_j=\frac{j}{2^n}$).

\smallskip

Recall that the strong discrepancy between the \enquote{Galerkin grid} in \eqref{galerkin-grid} and the \enquote{noise grid} in \eqref{noise-grid} is - at least partially - due to our treatment of $B^{\ka,n}$ as a $L^\infty$-function throughout Section \ref{sec:space-discret}. As we mentionned it in Remark \ref{rk:l-2-h}, we expect some more direct analysis in $\ch^{-\al}$ to provide sharper estimates with respect to the stochastic perturbation, which in turn could allow us to - at least partially - fill the gap between the two grids.

\smallskip

In this setting, Figure \ref{Fig4} (where $n=5$, $H_0=H_1=\frac14$) accounts for the simulation of the corresponding \enquote{synchronized} scheme over the common grid $ t_i=\frac{i}{2^n}$, $x_j=\frac{j}{2^n}$. We have also provided a simulation of this scheme in a more regular situation for which $2H_0+H_1 >1$ (see Figure \ref{Fig5}, where $H_0=H_1=\frac34$ and $n=5$): the two figures \ref{Fig4} and \ref{Fig5} thus offer a clear contrast between the regular \enquote{functional} case, and the rough \enquote{distributional} regime.

\

\begin{figure}[h]
  \includegraphics[scale=0.25]{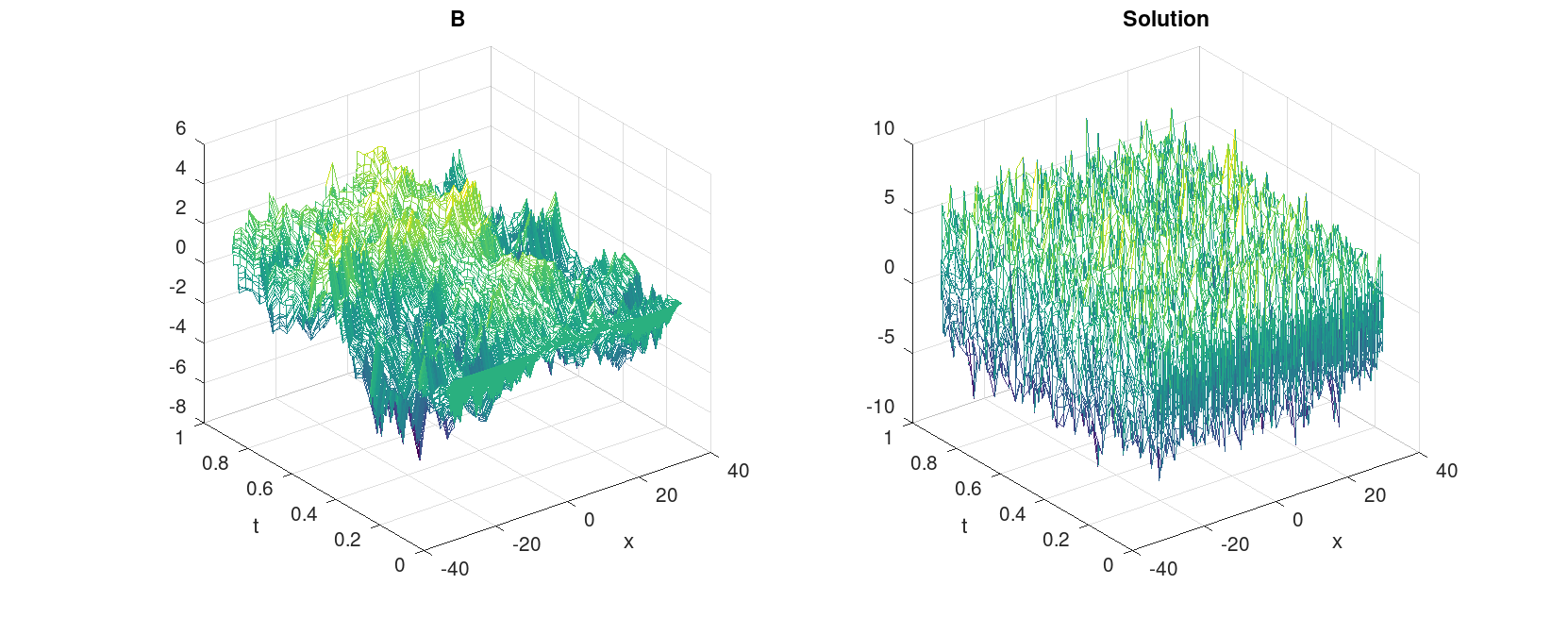}
   \caption{$H_0=H_1=\frac14$}
\label{Fig4}
  \end{figure}

\begin{figure}[h]
  \includegraphics[scale=0.25]{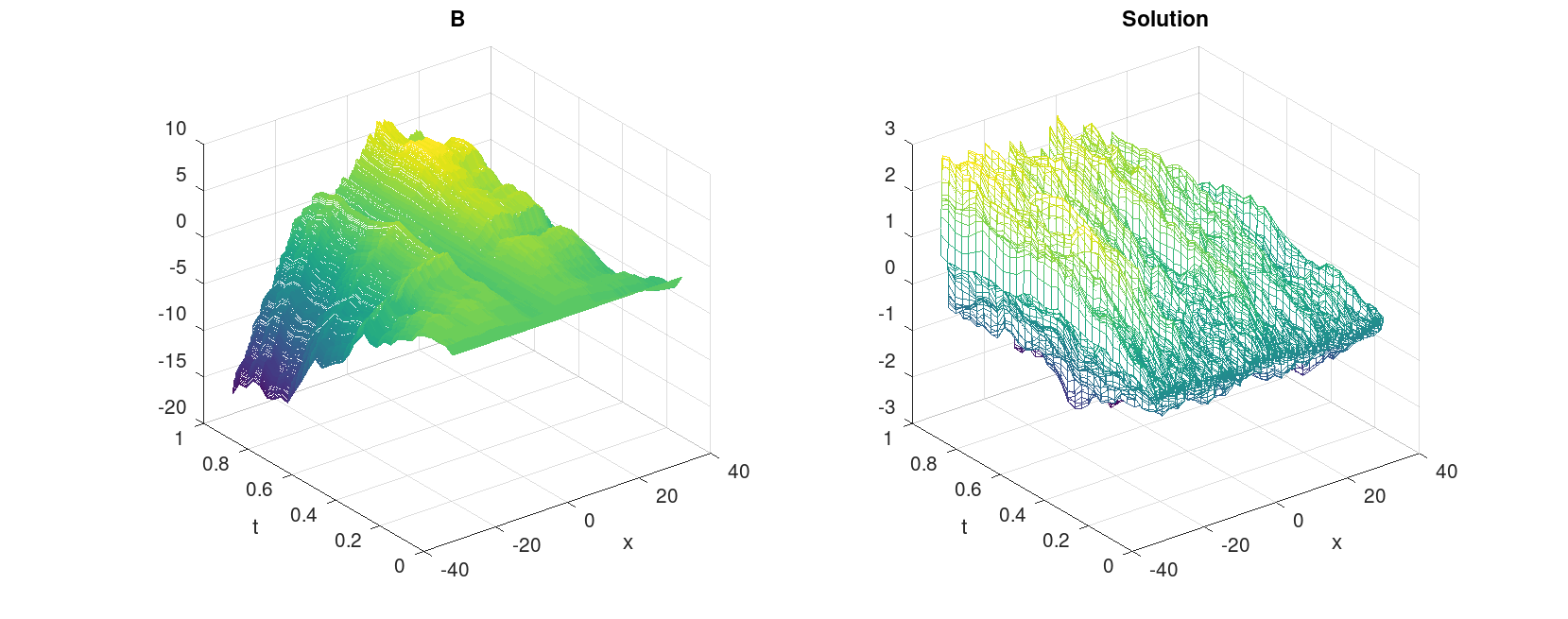}
   \caption{$H_0=H_1=\frac34$}
\label{Fig5}
  \end{figure}

\newpage

\end{document}